\title{Commutative $B_\infty$-algebras are shuffle algebras}
\date{}
\author{Lo\"\i c Foissy}
\affil{\small{Univ. Littoral C\^ote d'Opale, UR 2597
LMPA, Laboratoire de Math\'ematiques Pures et Appliqu\'ees Joseph Liouville
F-62100 Calais, France}.\\ Email: \texttt{foissy@univ-littoral.fr}}
\author{Fr\'ed\'eric Patras}
\affil{\small{Univ. C\^ote d'Azur et CNRS, UMR 7351 LJAD
F-06108 Nice cedex 2, France}.\\ Email: \texttt{patras@unice.fr}}
\theoremstyle{plain}
\newtheorem{theo}{Theorem}[section]
\newtheorem{lemma}[theo]{Lemma}
\newtheorem{cor}[theo]{Corollary}
\newtheorem{prop}[theo]{Proposition}
\newtheorem{defi}[theo]{Definition}
\theoremstyle{remark}
\newtheorem{remark}{Remark}[section]
\newtheorem{example}{Example}[section]
\newcommand{\K}{\mathbb{K}}
\newcommand{\N}{\mathbb{N}}
\newcommand{\id}{\mathrm{Id}}
\newcommand{\sh}{\mathrm{Sh}}
\renewcommand{\ker}{\mathrm{Ker}}
\newcommand{\prim}{\mathit{Prim}}
\newcommand{\desc}{\mathit{Desc}}
\begin{document}
\maketitle
\section{Introduction}
The purpose of the present article is to develop systematically the theory of commutative $B_\infty$-algebras, that is commutative Hopf algebras that are cofree as conilpotent coalgebras. In concrete terms, up to the choice of a basis $B$ of the vector subspace of primitive elements, a commutative $B_\infty$-algebra is a commutative Hopf algebra $H$ that has furthermore  a basis indexed by words (elements of the free monoid $B^\ast$ over $B$) in such a way that the expression of the coproduct in that basis is the deconcatenation coproduct of words. 
Various examples are provided by familiar objects in algebra, combinatorics and topology:  to quote a few, shuffle and quasi-shuffle Hopf algebras over a commutative algebras; Hopf algebra structures constructed on finite topologies, quasi-orders, orders; the Hopf algebra of quasi-symmetric functions; the Hopf algebras dual to enveloping algebras of free Lie algebras...

The choice of such a basis is in general highly non canonical and there is furthermore an infinity of such bases. In this context, a change of $B_\infty$-structure on $H$ can be understood as a change of basis. The most classical example of such phenomena in algebraic combinatorics, the theory of free Lie algebras and the structure theory of Hopf algebras is probably provided by change of bases in the Hopf algebra of quasi-symmetric functions or, dually, by change of bases in the Hopf algebra of descents in symmetric groups (see Example \ref{dsg}). We will actually show in the present article that there is a very close connection between the structure theory of commutative $B_\infty$-algebras and the one of descent algebras that extends the classical connection between the structure theory of Hopf algebras and the one of descent algebras.

 The main purpose of the article will thus be to investigate changes of $B_\infty$ structures on a given Hopf algebra. 
In order to study $B_\infty$ structures, we first construct an adapted categorical framework (the one of filtered-graded coalgebras). This framework accounts for the fact that in spite of commutative $B_\infty$-algebras not carrying a graded Hopf algebra structure in general (more precisely, the existence of a grading is not part or a direct consequence of their definition), the product of a commutative $B_\infty$-algebra has always a nice behavior with respect to the filtration induced by the cofree coalgebra structure. 

We investigate then the case of graded commutative $B_\infty$-algebras $H$, that is graded connected Hopf algebras equipped with the structure of a cofree graded coalgebra. 
Many examples in the literature correspond to this case, that has been investigated recently by C. Bellingeri, E. Ferrucci and N. Tapia\footnote{We thank them warmly for pointing out to us the relevance of their article on branched rough paths for the theory of $B_\infty$-algebras. Our initial project was focusing on the structure of general $B_\infty$-algebras; their article was a key motivation  to extend the scope of the article and systematically study the structure of graded $B_\infty$-algebras.} in the context of the (Butcher)-Connes-Kreimer Hopf algebra $H_{CK}$ of non planar trees and its applications to the theory of branched rough paths. As they pointed out, although their results focus on $H_{CK}$, they would hold more generally for arbitrary functors from the category of vector spaces to graded commutative $B_\infty$-algebras: in particular, such a functor is isomorphic (in the category of commutative Hopf algebras, by a precise isomorphism) to the shuffle Hopf algebra functor \cite[Remark 3.5]{BFT}. 

We recover their results on graded commutative $B_\infty$-algebras and augment their approach in several respects. Firstly, we systematically use the fact that there is, in the (locally finite dimensional) graded case, a perfect duality between a graded commutative Hopf algebras and the dual graded cocommutative Hopf algebra. This duality allows to prove jointly various structure theorems for graded commutative $B_\infty$-algebras and their graded duals --- we emphasize that working on the dual side makes things more standard and easier to grasp: recall Serre's famous wit:  ``there is a general
principle: every calculation relative to coalgebras is trivial and incomprehensible''.
We prove for example that these duals are always, in infinitely many ways that depend on the choice of a generating vector subspace, enveloping algebras of free Lie algebras.
Our arguments rely only on standard theorems in the theory of Lie algebras such as the Poincar\'e-Birkhoff-Witt theorem. 

Secondly, as the (graded) dual of the enveloping algebra of a free Lie algebra is a shuffle Hopf algebra,
we deduce from the duality with enveloping algebras of free Lie algebras that {\it any} family of Lie idempotents (Eulerian, Dynkin, Klyachko... --- there are infinitely many of them) gives rise to a Hopf algebra isomorphism between a graded commutative $B_\infty$-algebra and a shuffle Hopf algebra. The isomorphism introduced in \cite[Remark 3.5]{BFT} corresponds to the particular case of the Eulerian family. Our results imply in particular that the combinatorial theory of graded commutative $B_\infty$-algebras is much richer than what could be expected.

We turn then to the general (non graded) case. It includes in particular quasi-shuffle Hopf algebras over a commutative algebra $A$, studied in detail in \cite{FP20}, that provide a right framework to investigate for example It\^o to Stratonovich transformations for semimartingales \cite{ebrahimi2015flows}. 
We investigate first isomorphisms between a commutative $B_\infty$-algebra $H$ a shuffle Hopf algebra using general Hopf algebra techniques that were first developed to investigate the properties of Hopf algebra endomorphisms. We obtain a parametrisation of such isomorphisms by what we call tangent-to-identity infinitesimal endomorphisms of $H$.
Up to some point, this extends the study that we performed with Jean-Yves Thibon in \cite{foissy2} --- in the language of the present article, that article studied deformations of the shuffle Hopf algebra and the quasi-shuffle Hopf algebra functors from commutative algebras to  commutative $B_\infty$-algebras.
We use then an idea that was implicitly used in several of our earlier works and was developed systematically in \cite{CP21}, namely the fact that classical structure theorems and key properties of graded connected commutative or cocommutative Hopf algebras still hold under the assumption that
the Hopf algebra is unipotent (that is, that its identity map is locally unipotent for the convolution product \cite[Def. 4.1.2]{CP21}).
This allows to adapt to the non graded case the results obtained in the graded case. This adaptation is however partial as the combinatorial structure theory of unipotent Hopf algebras still has to be systematically developed. 

The article is organized as follows. We recall first in Section \ref{cofalg} definitions in relation to coalgebras, introduce immediately the category of filtered-graded coalgebras and prove in Section \ref{cfgc} some key properties of cofree coalgebras in this category. Section \ref{BI} introduces $B_\infty$-algebras and related notions whereas Section \ref{gba} introduces graded $B_\infty$ structures. Section \ref{FE} surveys several fundamental examples. Section \ref{SGB} investigates the structure of graded commutative $B_\infty$-algebras using duality properties, the Poincar\'e-Birkhoff-Witt theorem and the theories of free Lie algebras and Lie idempotents. Section \ref{SCB} studies endomorphisms of commutative $B_\infty$-algebras and  extends the results of Section \ref{SGB} to the non graded case. In the last two sections, we apply these results to the Hopf algebra of finite topologies and proceed to explicit computations, using extra structures associated to a complementary product and coproduct.

\section{Filtered-graded coalgebras}\label{cofalg}

Recall first some definitions and elementary properties of coalgebras. Details and proofs can be found in \cite{CP21}, especially \cite[Sect. 2.13 Graded and Conilpotent Coalgebras]{CP21}.
All vector spaces are defined over a ground field $\K$ of characteristic $0$.

A \it graded vector space \rm is a vector space decomposing as a direct sum
$V=\bigoplus\limits_{n\in\N}V_n$. It is \it reduced \rm if $V_0=0$.  Given $V,W$ two graded vector spaces, a morphism of graded vector spaces from $V$ to $W$ is a morphism of vector spaces $\phi:V\to W$  that respects the graduation ($\phi(V_n)\subset W_n$ for any $n\in \N$).
A graded vector space is locally finite dimensional if all the $V_n$ are finite-dimensional vector spaces.
The graded dual of a graded vector space is the graded vector space $V^\ast=\bigoplus\limits_{n\in\N}V_n^\ast$, where $V_n^\ast$ stands for the dual of $V_n$. It  is locally finite dimensional when $V$ is such.
The tensor product of two graded vector spaces is defined by
$V\otimes W=\bigoplus\limits_{n\in{\N}} (V\otimes W)_n$, where 
$(V\otimes W)_n:= \bigoplus\limits_{p+q=n}V_p\otimes W_q$.
The ground field $\K$ identifies with the graded vector space, still written $\K$, with only one non zero component, $\K_0=\K$. It is the unit of the tensor product ($V\otimes \K=V=V\otimes \K$).

A \it filtered \rm vector space is a vector space $V$ equipped with an increasing filtration by subspaces
$V_0\subset V_1\subset\cdots \subset V_n\subset\cdots$, such that $V=\bigcup\limits_{n\in \N}V_n$.  Given $V,W$ two filtered vector spaces, a morphism of filtered vector spaces from $V$ to $W$ is a morphism of vector spaces $\phi:V\to W$  that respects the filtration ($\phi(V_n)\subset W_n$ for any $n\in \N$).
The tensor product of two filtered vector spaces is defined by requiring $(V\otimes W)_n:=\sum\limits_{i+j=n}V_i\otimes W_j$. Each graded vector space $V$ is canonically filtered: writing $F$ for the functor from graded to filtered vector spaces, $FV_n:=\bigoplus\limits_{p\leq n}V_p$. We will be mostly interested in the present article in Hopf algebras that are filtered as algebras (in the sense that the product map is a map of filtered vector spaces) and graded as coalgebras. 

A graded coalgebra is a coalgebra in the tensor category of graded vector spaces.
That is, graded coalgebras and other graded structures are defined as usual except for the fact that structure morphisms, for example the coproduct $\Delta$ of a graded coalgebra $C$, has to be a morphism of graded vector spaces, so that 
\[\Delta: C_n\to \bigoplus\limits_{p+q=n}C_p\otimes C_q.\]
The counit map from $C$ to $\K$, that we will write $\varepsilon$ (or $\varepsilon_C$ if we want to emphasize what is the underlying coalgebra) is automatically a null map except in degree $0$.
A graded coalgebra $C$ is connected if $C_0=\K$; for such a coalgebra, $\Gamma(C)=\{1\}$, where $\Gamma(C)$ stands for the set of group-like elements in $C$, that is
\[\Gamma(C):=\{c\in C, \:c\not=0 \mbox{ and } \Delta(c)=c\otimes c\}.\]

\begin{defi}
The category $\mathbf{FgCoalg}$ of filtered-graded coalgebras is the category whose objects are graded coalgebras and the set of morphisms between two graded coalgebras $C$ and $D$ the set of filtered vector spaces morphisms of coalgebras from $C$ to $D$.
\end{defi}
That is, a morphism of filtered-graded coalgebras from $C$ to $D$ is a morphism $\phi$ of coalgebras such that $\phi(C_n)\subseteq \bigoplus\limits_{p\leq n}D_p$ for any $n\in \N$. 

Let now $(C,\Delta,\varepsilon)$ be a coaugmented coalgebra with coaugmentation $\eta: \K\to C$ (a coaugmentation is a map of coalgebras from the ground field to $C$, where the ground field is equipped with the identity coproduct: $\K=\K\otimes \K$). In other terms, we fix a group-like element $1\in C$, that is to say a nonzero element $1$ such that $\Delta(1)=1\otimes 1$. The coaugmentation is a section of the projection $\varepsilon$ to the ground field and one has the decomposition $C=\bar C\oplus\K$, where $\bar C:= \ker(\varepsilon)$.
Notice that a graded connected coalgebra is canonically coaugmented: its coaugmentation is the isomorphism between the ground field and the degree 0 component of the coalgebra, and $1$ is the unique group-like element of $C$. In general, for a coaugmented coalgebra one can define the reduced coproduct from $\bar C$ to $\bar C\otimes\bar C$ by
\[\overline \Delta(c):=\Delta(c)-c\otimes 1-1\otimes c;\]
the element $c$ is called primitive if $\overline\Delta (c)=0$, the vector space of primitive elements is denoted $\prim(C)$.

The iterated reduced coproduct from $\bar C$ to $\bar C^{\otimes n+2}$ is inductively defined by 
\begin{align*}
\bar\Delta_{n+2}&:=(\bar\Delta\otimes Id_{\bar C}^{\otimes n})\circ\bar\Delta_{n+1},\mbox{ with }\bar\Delta_2:=\bar\Delta.
\end{align*}
Let $F_n:= \ker\ (\overline \Delta_{n+1})\subset\bar C$. Notice that $\bar\Delta_{n+2}=(\bar\Delta\otimes Id_{\bar C}^{\otimes n})\circ\bar\Delta_{n+1}$ implies $F_n\subset F_{n+1}$.

\begin{lemma}\label{lemmeconil}
The coproduct $\Delta$ and the coalgebra $C$ are called conilpotent if $\bar C=\bigcup\limits_{n\geq 1}  F_n$, that is if for every $c\in \bar C$ there exists an integer $n\geq 2$ such that $\overline \Delta_n(c)=0$. When the coalgebra $C$ is graded and connected, the coproduct is automatically conilpotent.
\end{lemma}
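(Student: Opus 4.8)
Only the final assertion requires an argument, the first sentence being the definition of conilpotency. The plan is the standard degree-counting argument: on a graded connected coalgebra the reduced coproduct $\bar\Delta$ is a morphism of graded vector spaces which, applied to a homogeneous element of degree $n$, produces only tensors all of whose factors lie in $\bar C$ and hence have degree at least $1$; iterating $\bar\Delta$ more than $n$ times is therefore impossible for degree reasons, so it must kill $C_n$.

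First I would record that, for a graded connected coalgebra, $\varepsilon$ vanishes on $C_n$ for $n\geq 1$ and restricts to an isomorphism $C_0\cong\K$, so that $\bar C=\ker(\varepsilon)=\bigoplus_{n\geq 1}C_n$ and $1$ generates $C_0$. Next I would identify the ``boundary'' part of the coproduct: for $c\in C_n$ with $n\geq 1$, the counit axioms $(\varepsilon\otimes\id_C)\circ\Delta=\id_C=(\id_C\otimes\varepsilon)\circ\Delta$ together with the vanishing of $\varepsilon$ in positive degrees force the component of $\Delta(c)$ in $C_n\otimes C_0$ to be exactly $c\otimes 1$ and the component in $C_0\otimes C_n$ to be exactly $1\otimes c$. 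Consequently
\[\bar\Delta(c)=\Delta(c)-c\otimes 1-1\otimes c\ \in\ \bigoplus_{\substack{p+q=n\\ p,q\geq 1}}C_p\otimes C_q;\]
in particular $\bar\Delta$ does map $\bar C$ into $\bar C\otimes\bar C$ and is a morphism of graded vector spaces.

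Then I would iterate, using $\bar\Delta_{m+1}=(\bar\Delta\otimes\id_{\bar C}^{\otimes m-1})\circ\bar\Delta_m$. An immediate induction on $m$ gives, for $c\in C_n$,
\[\bar\Delta_m(c)\ \in\ \bigoplus_{\substack{i_1+\cdots+i_m=n\\ i_1,\dots,i_m\geq 1}}C_{i_1}\otimes\cdots\otimes C_{i_m}.\]
For $m=n+1$ the indexing set is empty, since a sum of $n+1$ integers each at least $1$ is at least $n+1>n$; hence $\bar\Delta_{n+1}(c)=0$, that is $C_n\subseteq F_n$ for every $n\geq 1$. Since $\bar C=\bigoplus_{n\geq 1}C_n$ and the $F_n$ are nested, this yields $\bar C=\bigcup_{n\geq 1}F_n$, which is precisely conilpotency of $\Delta$.

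There is no genuine obstacle here; the one point deserving a line of care is the identification of the degree-$(n,0)$ and degree-$(0,n)$ components of $\Delta(c)$ with $c\otimes 1$ and $1\otimes c$, which is exactly where connectedness ($C_0=\K\cdot 1$) and counitality are used. Everything else is bookkeeping with the grading.
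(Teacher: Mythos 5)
Your argument is correct and is the same degree-counting proof as the paper's: the iterated reduced coproduct $\bar\Delta_k$ sends $\bar C_n$ into a sum over compositions of $n$ into $k$ parts each at least $1$, which is empty for $k>n$, so $C_n\subseteq F_n$. Your extra verification (via counitality and $C_0=\K$) that $\bar\Delta$ is graded and lands in $\bar C\otimes\bar C$ is a careful spelling-out of what the paper takes as given; otherwise the two proofs coincide.
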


\begin{proof}
 Indeed, when $C$ is graded connected, $\bar C_0=0$. As $\bar\Delta_k:\bar C_n\to\bigoplus\limits_{\substack{i_1+\dots+i_k=n,\\ i_1,\ldots,i_k\geq 1}}\bar C_{i_1}\otimes\dots\bar C_{i_k}$, the iterated coproduct $\overline \Delta_k$, $k\geq 2$, vanishes on $C_{n}$ for $1\leq n\leq k-1$.
 \end{proof}
 
\section{Cofree filtered-graded coalgebras}\label{cfgc}

Let now $V$ be a vector space over $\K$. 
We denote by $T(V):=\bigoplus\limits_{n=0}^\infty V^{\otimes n}$ the tensor gebra \footnote{We use the terminology of \cite{CP21} and call gebra a vector space that can be equipped with several algebraic structures --- this allows in particular to avoid calling ``tensor algebra'' the vector space $T(V)$ without equipping it with the algebra structure obtained from the concatenation product of words.}of $V$, and use the word notation for tensors (that is, $v_1\ldots v_n$ will stand for $v_1\otimes\cdots\otimes v_n$). We will use later a similar notation for the tensor gebra over an alphabet $X$ (the linear span of $X^\ast$, the set of words --- or free monoid --- over $X$).
The tensor gebra is graded: the degree of a word $w$ is its length $\ell(w)$, that is, the number of its letters. 

We equip the tensor gebra with the deconcatenation coproduct, that makes it, together with the canonical projection $\varepsilon$ to $\K=V^{\otimes 0}$, a graded (counital, conilpotent, connected, coassociative) coalgebra:
for any $v_1,\ldots,v_n\in V$, with $n\geq 0$,
\[\Delta(v_1\ldots v_n)=\sum_{i=0}^n v_1\ldots v_i\otimes  v_{i+1}\ldots v_n.\]

We write $T_+(V):=\bigoplus\limits_{n=1}^\infty V^{\otimes n}$ for $\overline{T(V)}$. As mentioned in Section \ref{cofalg}, it is equipped with the (coassociative but not counital) reduced deconcatenation coproduct:
for any $v_1,\ldots,v_n\in V$, with $n\geq 1$,
\[\overline\Delta(v_1\ldots v_n)=\sum_{i=1}^{n-1} v_1\ldots v_i\otimes  v_{i+1}\ldots v_n.\]

The following Lemma justifies the introduction of the category of filtered-graded coalgebras.
\begin{lemma}\label{filtcoalg}
Let $C$ be graded connected coalgebra and $\phi$ be a morphism of coalgebras from $C$ to $(T(V),\Delta)$. Then, $\phi$ is a morphism of filtered graded coalgebras. That is, $\phi(C_n)\subseteq FT(V)_n=\bigoplus\limits_{k=0}^nV^{\otimes k}$.
\end{lemma}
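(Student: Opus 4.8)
The plan is to exploit the conilpotency of $C$ (Lemma \ref{lemmeconil}) together with the fact that the deconcatenation coproduct on $T(V)$ detects the length of a word. First I would set up the key bookkeeping device: for a word $w=v_1\ldots v_k\in T(V)$ of length $k\geq 2$, the iterated reduced coproduct satisfies $\bar\Delta_k(w)=v_1\otimes\cdots\otimes v_k\neq 0$ whereas $\bar\Delta_{k+1}(w)=0$; more precisely, $\bar\Delta_{m+1}$ vanishes on $\bigoplus_{k\leq m}V^{\otimes k}$ and is injective, after projection to $V^{\otimes(m+1)}$, on $V^{\otimes(m+1)}$. In other words, the length filtration $FT(V)_m=\bigoplus_{k\leq m}V^{\otimes k}$ is exactly the conilpotency filtration $F_m=\ker\bar\Delta_{m+1}$ of $(T_+(V),\bar\Delta)$ (together with the scalars). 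So the statement $\phi(C_n)\subseteq FT(V)_n$ is equivalent to $\bar\phi(\bar C_n)\subseteq F_n$, where $\bar\phi$ denotes the restriction of $\phi$ to $\bar C=\ker\varepsilon$.

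Next I would use the compatibility of $\phi$ with the reduced coproducts: since $\phi$ is a morphism of coalgebras and both coalgebras are coaugmented (with $C$ graded connected, hence canonically coaugmented, and $T(V)$ coaugmented by the inclusion of $V^{\otimes 0}$), $\phi$ preserves group-like elements, sends $1_C$ to $1_{T(V)}$, and therefore intertwines $\bar\Delta_C$ with $\bar\Delta_{T(V)}$; by induction it intertwines all the iterated reduced coproducts, i.e. $\bar\Delta_{m}\circ\bar\phi=\bar\phi^{\otimes m}\circ\bar\Delta_{m}$ for all $m\geq 2$. Now take $c\in C_n$ with $c\in\bar C$ (the case $n=0$ is trivial since $C_0=\K$ and $\phi(1)=1\in FT(V)_0$). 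By Lemma \ref{lemmeconil} the graded connected coalgebra $C$ is conilpotent; in fact its proof shows the sharp statement that $\bar\Delta_{n+1}$ vanishes on $\bar C_n$ (the iterated coproduct $\bar\Delta_k$ lands in a sum of tensors of total degree $n$ with all factors of degree $\geq 1$, which is empty as soon as $k>n$). Hence $\bar\Delta_{n+1}(c)=0$, and applying the intertwining relation, $\bar\phi^{\otimes(n+1)}(\bar\Delta_{n+1}(c))=\bar\Delta_{n+1}(\bar\phi(c))=0$, so $\bar\phi(c)\in F_n=\ker\bar\Delta_{n+1}$.

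It then remains to identify $F_n$ inside $T_+(V)$: I claim $F_n=\bigoplus_{k=1}^n V^{\otimes k}$. The inclusion $\supseteq$ is the computation already noted (a word of length $\leq n$ is killed by $\bar\Delta_{n+1}$). For $\subseteq$, write an element $x\in T_+(V)$ as $x=\sum_{k\geq 1}x_k$ with $x_k\in V^{\otimes k}$; since $\bar\Delta_{n+1}$ respects the grading and, restricted to $V^{\otimes k}$, equals the obvious splitting map into $\bigoplus_{i_1+\cdots+i_{n+1}=k,\ i_j\geq 1}V^{\otimes i_1}\otimes\cdots\otimes V^{\otimes i_{n+1}}$ followed, in particular, by the component $\mathrm{Id}\colon V^{\otimes k}\to V^{\otimes 1}\otimes\cdots\otimes V^{\otimes 1}$ when $k=n+1$, one sees that $\bar\Delta_{n+1}(x)=0$ forces $x_k=0$ for all $k\geq n+1$ (the terms in distinct tensor-degree multi-indices are linearly independent, and for each $k\geq n+1$ one of the components of $\bar\Delta_{n+1}$ restricted to $V^{\otimes k}$ is injective). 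Combining the three steps: $\phi(c)\in\K\oplus F_n=FT(V)_n$ for $c\in C_n$, which is the assertion. I expect the only mildly delicate point to be this last identification $F_n=\bigoplus_{k\leq n}V^{\otimes k}$, i.e. checking that $\bar\Delta_{n+1}$ really is injective on each $V^{\otimes k}$ with $k\geq n+1$ — but this is immediate from the explicit combinatorial description of iterated deconcatenation, so there is no serious obstacle; everything else is a formal consequence of $\phi$ being a coaugmented coalgebra map out of a graded connected (hence conilpotent) coalgebra.
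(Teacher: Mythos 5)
Your proof is correct and takes essentially the same route as the paper's: you use that $\phi$ intertwines the iterated reduced coproducts, that $C_n\subseteq \ker(\overline\Delta_{n+1})$ by graded connectedness, and that in $T(V)$ one has $\ker(\overline\Delta_{n+1})=FT(V)_n$. The only difference is that you carry out explicitly the identification $\ker(\overline\Delta_{n+1})=\bigoplus_{k\leq n}V^{\otimes k}$, which the paper dismisses as an easy exercise, and your injectivity argument for it is sound.
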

\begin{proof}
As $\phi$ is a coalgebra morphism between connected coalgebras, $(\phi\otimes \phi)\circ \overline\Delta=\overline\Delta\circ \phi$, this implies that for any $n\in \N$,
\[\phi^{\otimes n}\circ\overline\Delta_n=\overline\Delta_n\circ\phi.\]
Therefore, $\phi(\ker(\overline\Delta_{n+1}))\subseteq \ker(\overline\Delta_{n+1})$. It is an easy exercise to show that in $T(V)$, $\ker(\overline\Delta_{n+1})=FT(V)_n$. 
Moreover, as we have already seen, it holds that $C_n\subseteq \ker(\overline\Delta_{n+1})$. We get finally $\phi(C_n)\subseteq FT(V)_n$. 
\end{proof}

\begin{defi}[Cofree filtered-graded coalgebras]\label{cofreedef}
Let $V$ be a reduced graded vector space. A cofree filtered-graded coalgebra over $V$ is a connected graded coalgebra $C$ together with a filtered vector space map $\pi$ from $\bar C$ to $V$
 such that
for each connected graded coalgebra $D$, any filtered vector space morphism $\phi$ from $\bar D$ to $V$ lifts uniquely to a morphism $\Phi$ of coalgebras from $D$ to $C$ in $\mathbf{FgCoalg}$, such that $\pi\circ \Phi_{\mid \bar D}=\phi$. A cofree filtered-graded coalgebra is called reduced if $V=V_1$. The space $V$ is called the cofreely cogenerating space of $C$; the map $\pi$ is called the structure map.
\end{defi}
As usual for cofree objects, any two cofree filtered-graded coalgebras over $V$ are isomorphic (by a unique isomorphism, see Lemma \ref{uniso} below). This justifies to call (slightly abusively) any cofree filtered-graded coalgebra over $V$, ``the'' cofree filtered-graded coalgebra over $V$.

\begin{remark} Cofree graded coalgebras are defined similarly: just replace everywhere ``filtered-graded'' and ``filtered'' by ``graded'' in the Definition.\end{remark}

\begin{lemma}\label{cofreeuniv} The tensor gebra $T(V)$ over a graded vector space $V$, equipped with the deconcatenation coproduct $\Delta$, the canonical projection $\pi_V$ from $T_+(V)$ to $V$ and the graduation $|v_1\ldots v_n|:=|v_1|+\ldots +|v_n|$ for $v_i\in V,\ i=1\dots n$, is a cofree filtered-graded coalgebra over $V$ denoted $T^g(V)$ (the exponent $g$ is introduced to indicate that the graduation is not the tensor degree one excepted in the reduced case, that is when $V=V_1$). We call it the standard cofree filtered-graded coalgebra over $V$. With our previous notation, the morphism $\Phi$ from $D$ to $T^g(V)$ is obtained as
\[\varepsilon+\sum\limits_{n\geq 1}\phi^{\otimes n}\circ\bar\Delta_n.\]
\end{lemma}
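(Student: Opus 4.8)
The plan is to verify the universal property of Definition~\ref{cofreedef} directly, taking as candidate lift the map $\Phi$ appearing in the statement, $\Phi:=\varepsilon+\sum_{n\geq 1}\phi^{\otimes n}\circ\bar\Delta_n$, read with the convention $\bar\Delta_1:=\mathrm{Id}_{\bar D}$ so that its $n=1$ summand is $\phi$. First I would check that $\Phi$ is well defined: for homogeneous $d\in D_m$, the argument in the proof of Lemma~\ref{lemmeconil} gives $\bar\Delta_n(d)=0$ once $n>m$, so the sum defining $\Phi(d)$ is finite and $\Phi$ is a genuine linear map $D\to T(V)$ with $\Phi(1_D)=1$. (I would also record here that $\pi_V$ is a morphism of filtered vector spaces $T_+(V)\to V$, so that $(T^g(V),\Delta,\pi_V)$ is a datum of the kind required by Definition~\ref{cofreedef}.)

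Next I would check that $\Phi$ respects the filtrations, that is $\Phi(D_m)\subseteq FT^g(V)_m$; this is the one point where the hypothesis that $\phi$ is a morphism of \emph{filtered} vector spaces is used. Since $\phi(D_p)\subseteq FV_p=\bigoplus_{q\leq p}V_q$ and $\bar\Delta_n(D_m)\subseteq\bigoplus_{i_1+\dots+i_n=m}D_{i_1}\otimes\cdots\otimes D_{i_n}$, the subspace $\phi^{\otimes n}\bar\Delta_n(D_m)$ is spanned by tensors $v_1\ldots v_n$ with $v_j\in V_{q_j}$, $q_j\leq i_j$, hence of $g$-degree $\sum_j q_j\leq\sum_j i_j=m$. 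Since the $g$-grading of $T^g(V)$ is the total degree, not the tensor length, this yields $\Phi(D_m)\subseteq FT^g(V)_m$, as a morphism of $\mathbf{FgCoalg}$ must satisfy.

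Then I would verify that $\Phi$ is a morphism of coalgebras. The counit identity $\varepsilon_{T(V)}\circ\Phi=\varepsilon_D$ is immediate, every summand with $n\geq 1$ lying in $T_+(V)$. As $\Phi(1_D)=1$ and $\Phi(\bar D)\subseteq T_+(V)$, comultiplicativity reduces to the identity $\bar\Delta\circ\Phi=(\Phi\otimes\Phi)\circ\bar\Delta_D$ on $\bar D$, with $\bar\Delta$ on the left the reduced deconcatenation coproduct of $T_+(V)$. Expanding the left side on $d\in\bar D$ by the explicit deconcatenation formula kills the $n=1$ term and gives, for $n\geq 2$, the expression $\sum_{i=1}^{n-1}(\phi^{\otimes i}\otimes\phi^{\otimes(n-i)})\circ(\bar\Delta_i\otimes\bar\Delta_{n-i})\circ\bar\Delta_D(d)$, using the generalised coassociativity $\bar\Delta_n=(\bar\Delta_a\otimes\bar\Delta_b)\circ\bar\Delta_D$ for all $a,b\geq 1$ with $a+b=n$ (itself an easy induction from coassociativity of $\bar\Delta_D$); summing over $n\geq 2$ and reindexing by $(a,b)=(i,n-i)$ reassembles exactly $(\Phi\otimes\Phi)\circ\bar\Delta_D(d)$, since $\Phi_{\mid \bar D}=\sum_{a\geq 1}\phi^{\otimes a}\bar\Delta_a$. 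The remaining relation $\pi_V\circ\Phi_{\mid \bar D}=\phi$ is read off the formula, $\pi_V$ annihilating $V^{\otimes n}$ for $n\geq 2$.

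For uniqueness I would take any morphism $\Psi:D\to T^g(V)$ of $\mathbf{FgCoalg}$ with $\pi_V\circ\Psi_{\mid \bar D}=\phi$. Being a morphism of connected coalgebras, $\Psi$ satisfies $\Psi^{\otimes n}\circ\bar\Delta_n=\bar\Delta_n\circ\Psi$ on $\bar D$ (with $\bar\Delta_n$ on the right the $n$-th iterated reduced deconcatenation of $T_+(V)$); composing with $\pi_V^{\otimes n}$ gives $\pi_V^{\otimes n}\circ\bar\Delta_n\circ\Psi=\phi^{\otimes n}\circ\bar\Delta_n$ for every $n\geq 1$, and likewise with $\Phi$ in place of $\Psi$. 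Then I would invoke the elementary fact already used in the proof of Lemma~\ref{filtcoalg}: for $x\in T_+(V)$, $\pi_V^{\otimes n}(\bar\Delta_n(x))$ is exactly the component of $x$ in $V^{\otimes n}$, so $x$ is determined by this family. Hence $\Phi$ and $\Psi$ agree on $\bar D$; since both send $1_D$ to the unique group-like element of the connected coalgebra $T^g(V)$, $\Phi=\Psi$. I expect the main obstacle to lie not in any single step --- each is essentially the classical tensor-coalgebra computation --- but in keeping the two gradings cleanly apart throughout: the tensor length, which governs conilpotency and the reconstruction step of the uniqueness argument, versus the $g$-grading, which governs the filtration condition and into which the hypothesis on $\phi$ feeds.
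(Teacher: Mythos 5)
Your proof is correct and takes essentially the same route as the paper: the paper's proof simply cites the cofree-conilpotent universal property of $(T(V),\Delta,\pi_V)$ together with the formula for $\Phi$ from \cite{CP21}, notes that graded connected coalgebras are conilpotent, and then checks, exactly as you do, that the lift is a filtered-graded morphism because $\phi$ is filtered and the coproduct of $T^g(V)$ is graded. The only difference is that you verify the conilpotent-case universal property in-line (finiteness of the sum, comultiplicativity via generalized coassociativity, uniqueness via $\pi_V^{\otimes n}\circ\bar\Delta_n$ recovering the length-$n$ component) instead of citing it, which is a harmless expansion of the same argument.
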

In the formula, it is understood that if $d\in D$ decomposes as $\varepsilon (d)+(d-\varepsilon (d))\in\mathbb K\oplus \bar D$, 
\[\Phi(d)=\varepsilon(d)+\sum\limits_{n\geq 1}\phi^{\otimes n}\circ\bar\Delta_n(d-\varepsilon (d)).\]
\begin{proof}
Recall that $(T(V),V,\pi_V)$ is a cofree conilpotent coalgebra, that the formula for $\Phi$ holds in the cofree conilpotent case (see e.g. \cite[Exercise 2.13.3, Remark 2.13.1]{CP21}) and that graded connected coalgebras are conilpotent. The fact that $\Phi$ is a filtered-graded morphism of coalgebras follows from the fact that the coproduct is a graded map from $T^g(V)$ to $T^g(V)\otimes T^g(V)$ and that $\phi$ is a filtered map from $\overline D$ to $V$. This concludes the proof.\end{proof}

Leaving aside graduations, the simplest way to understand and prove the Lemma and the formula for $\Phi$ is by dualizing the statement and using the fact that the tensor gebra equipped with the concatenation product of words is a free associative algebra. Given indeed a map $\phi^\ast$ from the dual of $V$, $V^\ast$, to the projective limit of vector spaces $\cdots\to FD_{n+1}^\ast \to FD_n^\ast\to \cdots\to FD_1^\ast$, where $FD_n^\ast=D_1^\ast\oplus\cdots\oplus D_n^\ast$ and $D^\ast$ is a graded connected algebra, this map uniquely extends to an algebra map from $T(V^\ast)$ to $\hat D=\K\oplus \prod\limits_{n\in\N^\ast}D_n^\ast$, the completion of the graded dual of $D$ with respect to its canonical filtration. This algebra map $\xi$ is given as usual on $T_+(V^\ast)$ by 
\[\xi(v_1^\ast\cdots v_n^\ast):=\phi^\ast(v_1^\ast)\cdots \phi^\ast(v_n^\ast),\]
which is dual to the formula for $\Phi$ in the Lemma. The fact that $D^\ast$ is a graded algebra insures that $\xi$ is well-defined as the degree $p$ component of $\phi^\ast(v_1)\cdots \phi^\ast(v_n)$ is obtained as the sum of a finite number of terms, for any $p$.

\begin{remark}
Notice that since $V=\prim(T^g(V))$, the vector space of primitive elements of $T^g(V)$, it holds for any cofree filtered-graded (resp. cofree graded, cofree conilpotent) coalgebra $C$ over $V$ that $\prim(C)\cong V$. In particular one can always choose $V$ to be $\prim(C)$.
Fix now a graded basis $(b_i)_{i\in J}$ of $\prim(C)$. The choice of a structure map $\pi$ can be interpreted as the choice of a basis of $C$ since it induces an isomorphism 
$C\cong T^g(\prim(C))$, where $T^g(\prim(C))$ has the basis $b_{j_1}\ldots b_{j_k},\ j_l\in J, l=1,\ldots ,k,\ k\in\mathbb N^\ast$. We will say that the choice of a cofree filtered-graded (resp. cofree graded, cofree conilpotent) coalgebra structure $\pi:C\to \prim(V)$ determines a {\it presentation} of $C$ and call the associated basis  (resp. graded basis, resp. basis) of $C$ the $\pi$-basis.
\end{remark}
\begin{remark}
The isomorphism $C\cong T^g(\prim(C))$ is non canonical as it depends heavily on the choice of $\pi$. This observation is made more precise in the following Lemma and its proof.
\end{remark}
\begin{lemma}\label{uniso}\label{unisograd} Let $C$ be a connected graded coalgebra. Cofree filtered graded (resp. cofree graded) coalgebra structures on $C$, whenever they exist, are parametrized by the surjections from $\overline C$ to $\prim(C)$ that are maps of filtered (resp. graded) vector spaces and restrict to the identity map on $\prim(C)$. 
All such structures are isomorphic:
a cofree filtered-graded (resp. graded) coalgebra structure on a connected graded coalgebra $C$ is thus unique up to isomorphisms.
\end{lemma}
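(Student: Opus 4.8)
The plan is to reduce to cofree structure maps valued in $\prim(C)$, to establish the parametrization for those, and then to read off the uniqueness clause as an easy by-product. Throughout I use freely that $T^g(V)$ is a cofree filtered-graded coalgebra over $V$ (Lemma~\ref{cofreeuniv}) and that connected graded coalgebras are conilpotent (Lemma~\ref{lemmeconil}).

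\emph{Normalization.} Given a cofree filtered-graded structure $(V,\pi)$ on $C$, the two universal properties (Definition~\ref{cofreedef}) --- that of $(C,\pi)$ applied to $D=T^g(V)$ with the canonical projection $\pi_V$, and that of $T^g(V)$ applied to $D=C$ with $\pi$ --- produce mutually inverse morphisms in $\mathbf{FgCoalg}$, hence an isomorphism $\Phi\colon C\to T^g(V)$ with $\pi_V\circ\Phi_{\mid\bar C}=\pi$. Restricting to primitive subspaces and using $\prim(T^g(V))=V$ (on which $\pi_V$ is the identity), this gives $\pi_{\mid\prim(C)}=\Phi_{\mid\prim(C)}\colon\prim(C)\to V$, a filtered isomorphism with filtered inverse. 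Post-composing $\pi$ with $(\pi_{\mid\prim(C)})^{-1}$ --- a transport of structure along the isomorphism of cogenerating spaces --- replaces $(V,\pi)$ by an isomorphic cofree structure whose cogenerating space is $\prim(C)$ and whose structure map $\pi\colon\bar C\to\prim(C)$ restricts to $\id$ on $\prim(C)$, hence is surjective. Such a normalized structure map is in particular a filtered surjection restricting to $\id$; it therefore remains only to prove the converse, namely that (under the standing hypothesis that $C$ admits at least one cofree filtered-graded structure) \emph{every} filtered map $\pi\colon\bar C\to\prim(C)$ with $\pi_{\mid\prim(C)}=\id$ arises as such a structure map.

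\emph{The converse.} Fix a normalized cofree structure $(C,\pi_0)$, and let $\pi\colon\bar C\to\prim(C)$ be an arbitrary filtered map with $\pi_{\mid\prim(C)}=\id$. The universal property of $(C,\pi_0)$, applied to $C$ and $\pi$, yields $\alpha\in\mathbf{FgCoalg}(C,C)$ with $\pi_0\circ\alpha_{\mid\bar C}=\pi$. Since a coalgebra endomorphism of a connected graded coalgebra fixes $1$ and preserves $\prim(C)$, for $x\in\prim(C)$ one gets $\alpha(x)\in\prim(C)$ and $\alpha(x)=\pi_0(\alpha(x))=\pi(x)=x$, so $\alpha$ is the identity on $\prim(C)$. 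The crux is the claim that \emph{any} coalgebra endomorphism $\alpha$ of $C$ restricting to the identity on $\prim(C)$ is an automorphism in $\mathbf{FgCoalg}$. To see this, set $F_n=\ker\bar\Delta_{n+1}$, so $\bar C=\bigcup_nF_n$ and $F_1=\prim(C)$. The identity $\alpha^{\otimes k}\circ\bar\Delta_k=\bar\Delta_k\circ\alpha$ (as in the proof of Lemma~\ref{filtcoalg}) gives $\alpha(F_n)\subseteq F_n$, and by coassociativity of $\bar\Delta$ one has $\bar\Delta_n(F_n)\subseteq\prim(C)^{\otimes n}$ (each of the $n$ tensor slots of $\bar\Delta_n(x)$ is annihilated by $\bar\Delta$, hence primitive), a subspace on which $\alpha^{\otimes n}$ is the identity. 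Consequently $\bar\Delta_n\big((\alpha-\id)(x)\big)=0$ for $x\in F_n$, that is, $N:=\alpha-\id$ sends $F_n$ into $F_{n-1}$ and kills $F_1$; hence $N$ is locally nilpotent on $\bar C$ and $\alpha^{-1}=\sum_{k\ge0}(-N)^k$ is a well-defined linear inverse, automatically a coalgebra morphism. Since moreover $\bigoplus_{p\le n}C_p\subseteq F_n$ (same vanishing argument as in Lemma~\ref{lemmeconil}) and $N$ preserves the grading filtration, the displayed sum is finite on each $\bigoplus_{p\le n}C_p$, so $\alpha^{-1}$ too is a morphism in $\mathbf{FgCoalg}$. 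Transporting the cofree structure $(C,\pi_0)$ along the isomorphism $\alpha$ shows that $(C,\pi)=(C,\pi_0\circ\alpha_{\mid\bar C})$ is a cofree filtered-graded coalgebra over $\prim(C)$, with $\alpha$ an isomorphism $(C,\pi)\to(C,\pi_0)$ of cofree coalgebras.

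\emph{Uniqueness and the graded case.} Any two cofree filtered-graded structures on $C$, normalized to structure maps $\pi_1,\pi_2$, are each isomorphic to $(C,\pi_0)$ by the previous step, hence to one another; and the structure-preserving isomorphism $(C,\pi_1)\to(C,\pi_2)$ is unique, being forced to be the unique lift of $\pi_1$ through $\pi_2$ furnished by the universal property of $(C,\pi_2)$. The ``graded'' variant is proved in exactly the same way, with ``filtered(-graded)'' replaced by ``graded'' and $\mathbf{FgCoalg}$ by the category of graded coalgebras; it is if anything simpler, as then every map that occurs is automatically graded. I expect the automorphism claim to be the main obstacle: the point is to recognize that a conilpotent-coalgebra endomorphism fixing all primitives is automatically unitriangular for the conilpotency filtration --- which is exactly the content of $\bar\Delta_n(F_n)\subseteq\prim(C)^{\otimes n}$ --- and then to verify that inverting it does not break compatibility with the grading filtration.
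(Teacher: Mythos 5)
Your proof is correct, and it follows the same overall skeleton as the paper's --- normalize the cogenerating space to $\prim(C)$, use the universal property to attach to a candidate structure map $\pi$ an endomorphism in $\mathbf{FgCoalg}$, show that this endomorphism is invertible, and transport the cofree structure along it --- but you handle the key invertibility step by a genuinely different mechanism. The paper works inside the explicit model $T^g(\prim(C))$: the induced endomorphism $\tilde\pi$ is written out on words as in Eq.~(\ref{unifla}), and its inverse is produced by the explicit recursion (\ref{inversestru}), a formula the paper then reuses (compare Proposition~\ref{isoL}). You instead prove an abstract criterion directly on $C$: since $\bar\Delta_n(F_n)\subseteq\prim(C)^{\otimes n}$, any filtered coalgebra endomorphism $\alpha$ fixing $\prim(C)$ pointwise satisfies $(\alpha-\id)(F_n)\subseteq F_{n-1}$, hence $\alpha-\id$ is locally nilpotent by conilpotency and $\alpha$ is inverted by a locally finite geometric series, which you check is again a morphism of filtered-graded coalgebras. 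What your route buys is a coordinate-free argument and, compared with the paper's terse ``it can be inverted'', an explicit verification of both directions of the parametrization (the normalization of an arbitrary cogenerating space $V$ to $\prim(C)$, the automatic surjectivity of a map restricting to the identity on $\prim(C)$, and the transport-of-structure argument); what it does not give is the closed recursive formula for the inverse structure map, which is what the paper's computation-oriented later sections exploit. One cosmetic point: in your filteredness check, $\bigoplus_{p\le n}C_p\subseteq F_n$ should read $\bigoplus_{1\le p\le n}C_p\subseteq F_n$, since $F_n\subseteq\bar C$; this does not affect the argument, as your $N$ kills $\K\cdot 1$ anyway.
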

\begin{proof}
Fixing a cofree filtered-graded coalgebra structure on $C$ amounts to fixing an isomorphism of coalgebras between $C$ and the standard cofree filtered-graded coalgebra over $V:=\prim(C)$. We can therefore assume without restriction that $C$ is $T^g(V)$, the standard cofree filtered-graded coalgebra over $V$ with structure map denoted $\pi_V$ (the canonical projection from $T^g(V)$ to $V$). Any other cofree filtered graded coalgebra structure on $C$ is given by a filtered surjection $\pi$ from $C$ to $V$ that restricts to the identity map on $V$. The map $\pi$ is the structure map of the second cofree structure; by Definition \ref{cofreedef} it induces a filtered-graded coalgebra endomorphism $\tilde\pi$ of $T^g(V)$ obtained as:
\begin{equation}\label{unifla}
v_1\ldots v_n\longmapsto \sum\limits_{k<n}\sum\limits_{w_1\ldots w_k=v_1\ldots v_n}\pi(w_1)\ldots \pi(w_k).
\end{equation}
It can be inverted, and the inverse isomorphism $\tilde\mu$ such that $\tilde\mu\circ\tilde\pi =Id_{T^g(V)}$ is entirely characterized by the identity 
$$\pi_V\circ\tilde\mu\circ\tilde\pi=\pi_V,$$
that is, $\mu(v)=v$ for $v\in V$ and, for $k\geq 2$ and $v_1,\ldots,v_n\in V$),
\begin{equation}\label{inversestru}
\mu(v_1\ldots v_n)=-\sum\limits_{k<n}\sum\limits_{w_1\ldots w_k=v_1\ldots v_n}\mu(\pi(w_1)\ldots\pi(w_k)).
\end{equation}
The same arguments and formulas apply in the graded case.
\end{proof}
\begin{remark}
Unicity up to isomorphism of free and cofree objects is a universal phenomena and actually part of the abstract definition of freeness and cofreeness (usually as left and right adjunct functors). Unicity up to isomorphism also holds in particular in the conilpotent case; the formula for the inverse isomorphism is the same as in the filtered-graded and graded cases as it depends only on the formula for $\Phi$.
\end{remark}
\begin{remark} The category of filtered-graded coalgebras is the natural framework to study Hopf algebras such as quasi-shuffle Hopf algebras. See for example the discussion of natural endomorphisms of quasi-shuffle Hopf algebras in \cite{novelli2} for insights on the role of filtered-graded maps in that context (the surjections that appear in that article have indeed to be understood as filtered maps: they map a tensor of order $n$ to a tensor of lower or equal degree).
\end{remark}

\section{$B_\infty$-algebras}\label{BI}
The notion of $B_\infty$-algebra was first introduced by Getzler and Jones in \cite{GJ94} for cochain complexes. Their definition extends to other tensor categories, we consider here $B_\infty$-algebras in the category of vector spaces, see also \cite{F17} for further insights and applications. We survey here the fundamental definitions and properties, using systematically the properties of cofree coalgebras.

Recall, for completeness sake, that a bialgebra $B$ is a unital algebra and a counital coalgebra such that the product and the unit map are morphisms of counital coalgebras. When the coalgebra structure is conilpotent, as it is always the case in the present article, the notions of bialgebra and Hopf algebra identify and we will use the two terminologies indifferently. Denoting $\pi$ and $\Delta$ the product and the coproduct of $B$, the convolution $f\ast g$ of two linear endomorphisms $f,g$ of $B$ is defined by $f\ast g:=\pi\circ (f\otimes g)\circ \Delta$. The convolution product defines a unital algebra structure on the vector space of linear endomorphisms of $B$ with unit the composition $\eta\circ\varepsilon$ of the unit and counit maps. See \cite{CP21} for a systematic treatment.

\begin{defi}
Let $(H,*,\Delta)$ be a conilpotent Hopf algebra. A cofree structure on $H$ is the data of a structure map $\pi:\bar H\to \prim(H)$ making the triple $(H,\Delta,\pi)$ a cofree conilpotent coalgebra over $\prim(H)$. We will say that $(H,*,\Delta,\pi)$ (or simply $(H,\pi)$ when the underlying Hopf algebra structure is obvious) is a cofree Hopf algebra.
\end{defi}
 
A $B_\infty$-algebra structure on a vector space $V$ is equivalent to the definition of a Hopf algebra structure on the cofree conilpotent coalgebra $(T(V),\Delta,\pi_V)$ (see Proposition \ref{equivdef} below). The product map from $T(V ) \otimes T(V )$ to $T(V )$
is a map of conilpotent coalgebras and is entirely characterized by its projection to the subspace $V$. This observation and the application of Lemma \ref{cofreeuniv} lead to the Lemma \ref{lemmaBinf} below.
\begin{remark}
The tensor product of two graded connected coalgebras is a graded connected coalgebra and $T(V)\otimes T(V)$ is thus a connected graded coalgebra. By Lemma \ref{filtcoalg}, the product $\pi$, which is a map of coalgebras, necessarily respects the filtrations: $\pi(T_n(V)\otimes T_m(V))\subset \bigoplus_{k\leq n+m}T_{k}(V)$. In particular, structure theorems obtained in the previous section for coalgebras in $\mathbf{FgCoalg}$ apply to $B_\infty$-algebras.
\end{remark}

\begin{lemma}\label{lemmaBinf}
Let $*:T(V)\otimes T(V)\longrightarrow T(V)$ be a coalgebra map.
It is entirely characterized by the map $\langle-,-\rangle:T(V)\otimes T(V)\longrightarrow V$ defined by 
\[\langle v_1\ldots v_k,v_{k+1}\ldots v_{k+l}\rangle=\pi_V(v_1\ldots v_k*v_{k+1}\ldots v_{k+l}),\]
where $\pi_V:T(V)\longrightarrow V$ is the canonical surjection. 

Then, $1\ast 1=1$ and for any words $w,w'\in T_+(V)$,
\begin{align}
\label{eq1} w*w'&=\sum_{k=1}^\infty \sum_{\substack{w=w_1\ldots w_k,\\ w'=w'_1\ldots w'_k}} \langle w_1,w'_1\rangle\ldots \langle w_k,w'_k\rangle.
\end{align}
Note that in the sum, the words $w_i$ or $w'_j$ can be empty (in what case $w_i$ and $w'_j$ stand for 1 in the terms $\langle w_i,w'_i\rangle$ or $\langle w_j,w'_j\rangle$).

Moreover, 
\begin{align}
\label{eq1bis} w*1&=\sum_{k=1}^\infty \sum_{\substack{w=w_1\ldots w_k}} \langle w_1,1\rangle\ldots \langle w_k,1\rangle.\\
\label{eq1ter} 1*w'&=\sum_{k=1}^\infty \sum_{\substack{w'=w'_1\ldots w'_k}} \langle 1,w'_1\rangle\ldots \langle 1,w'_k\rangle.
\end{align}
\end{lemma}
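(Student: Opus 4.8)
The plan is to exploit the cofreeness of $(T(V),\Delta,\pi_V)$ in the conilpotent category (Lemma \ref{cofreeuniv}, via the conilpotent version recalled there) to reduce the entire statement to the single datum $\langle-,-\rangle$. First I would note that $T(V)\otimes T(V)$, equipped with the tensor product of the deconcatenation coproducts, is a conilpotent (in fact graded connected) coalgebra, so that any coalgebra map $*\colon T(V)\otimes T(V)\to T(V)$ is, by the universal property, uniquely determined by its composite with $\pi_V$, i.e.\ by the map $\langle-,-\rangle:=\pi_V\circ *$. That already gives the first assertion. The value $1*1=1$ is forced because $1\otimes 1$ is the unique group-like element of $T(V)\otimes T(V)$ and a coalgebra map sends group-likes to group-likes, and $1$ is the unique group-like of $T(V)$.

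Next I would reconstruct the formula \eqref{eq1} by feeding $\langle-,-\rangle$ into the explicit lift formula of Lemma \ref{cofreeuniv}, namely $\Phi=\varepsilon+\sum_{n\geq 1}\phi^{\otimes n}\circ\bar\Delta_n$ with $\phi=\langle-,-\rangle$ and $D=T_+(V)\otimes T_+(V)$ on the reduced part. The key computation is the iterated reduced coproduct of the tensor product coalgebra: $\bar\Delta_k$ applied to $w\otimes w'$ produces exactly $\sum w_1\otimes w'_1\mid\cdots\mid w_k\otimes w'_k$ summed over all ways of writing $w=w_1\cdots w_k$ and $w'=w'_1\cdots w'_k$ as (possibly empty) concatenations, where each $w_i\otimes w'_i$ is required to be non-trivial so that the factor lies in $\overline{T(V)\otimes T(V)}$. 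I would record this as the one genuine lemma-level computation (an induction on $k$ using $\bar\Delta_{k+1}=(\bar\Delta\otimes\mathrm{Id}^{\otimes k-1})\circ\bar\Delta_k$ together with the Sweedler description of the reduced coproduct on a tensor product of connected coalgebras). Applying $\langle-,-\rangle^{\otimes k}$ to this and summing over $k$ gives precisely the right-hand side of \eqref{eq1}; the parenthetical remark about empty $w_i$, $w'_j$ standing for $1$ is just the bookkeeping convention that $\langle w_i, w'_i\rangle$ with one slot empty means $\pi_V$ applied to a product with one factor $=1$.

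Finally, \eqref{eq1bis} and \eqref{eq1ter} are the specializations $w'=1$ and $w=1$: when $w'=1$ the only concatenations $w'=w'_1\cdots w'_k$ with all $w'_i$ empty force $w'_i=1$ for all $i$, and the non-triviality constraint then forces each $w_i$ to be non-empty, so the outer sum over decompositions of $w\otimes 1$ collapses to the sum over ordinary concatenations $w=w_1\cdots w_k$ into non-empty words; symmetrically for $1*w'$. I expect the main obstacle to be purely notational rather than conceptual: getting the bookkeeping of ``possibly empty factors subject to each pair being non-empty'' exactly right when unwinding $\bar\Delta_k$ on the tensor-product coalgebra, and making sure the convention $\langle 1,1\rangle$ never occurs (it cannot, since $1\otimes 1\notin\overline{T(V)\otimes T(V)}$). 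Once the description of $\bar\Delta_k(w\otimes w')$ is pinned down, everything else is a substitution into Lemma \ref{cofreeuniv}.
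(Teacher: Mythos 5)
Your proposal is correct and follows essentially the same route as the paper: $1*1=1$ from the uniqueness of group-likes, and the rest by reducing $*$ to $\langle-,-\rangle=\pi_V\circ *$ via the universal lift formula of Lemma \ref{cofreeuniv}, with the explicit description of $\bar\Delta_k$ on the tensor-product coalgebra (which the paper leaves implicit) supplying Eqs.~(\ref{eq1})--(\ref{eq1ter}). The only blemish is the passing identification of the reduced part of $T(V)\otimes T(V)$ with $T_+(V)\otimes T_+(V)$ (it is the kernel of $\varepsilon\otimes\varepsilon$, hence contains $T_+(V)\otimes 1$ and $1\otimes T_+(V)$), but your subsequent bookkeeping treats it correctly, so this is only a notational slip.
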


\begin{proof}As $\ast$ is a coalgebra map, it maps $1\otimes 1$, the unique group-like element in $T(V)\otimes T(V)$, to $1$, the unique group-like element in $T(V)$.
The Lemma follows then from Lemma \ref{cofreeuniv}: $\ast$ is entirely characterized by $\langle-,-\rangle:=\pi_V\circ\ast$, and applying the formula expressing $\ast$ in terms of $\langle-,-\rangle$ yields to Eqs (\ref{eq1},\ref{eq1bis},\ref{eq1ter}).
\end{proof}

\begin{lemma}\label{lemmaunit}
With the same notation, the product $\ast$ is unital, with unit 1 if, and only if, \[\langle-,1\rangle=\langle 1,-\rangle=\pi_V.\]
\end{lemma}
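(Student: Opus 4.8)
The plan is to prove the two implications separately, in both directions using that, by Lemma \ref{lemmaBinf}, the coalgebra map $\ast$ is entirely encoded by $\langle-,-\rangle=\pi_V\circ\ast$ together with the explicit formulas (\ref{eq1bis}) and (\ref{eq1ter}) for $w\ast 1$ and $1\ast w$.

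First I would dispatch the ``only if'' direction, which is immediate from the definition of $\langle-,-\rangle$. If $\ast$ is unital with unit $1$, then $w\ast 1=w$ and $1\ast w=w$ for every $w\in T(V)$; composing with $\pi_V$ gives $\langle w,1\rangle=\pi_V(w\ast 1)=\pi_V(w)$ and $\langle 1,w\rangle=\pi_V(1\ast w)=\pi_V(w)$, that is $\langle-,1\rangle=\langle 1,-\rangle=\pi_V$.

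For the ``if'' direction, assume $\langle-,1\rangle=\langle 1,-\rangle=\pi_V$. I would compute $w\ast 1$ for a nonempty word $w=v_1\ldots v_n$ (with $v_i\in V$) directly from (\ref{eq1bis}): each summand equals $\pi_V(w_1)\ldots\pi_V(w_k)$, and since $\pi_V$ vanishes on $\K=V^{\otimes 0}$ and on $V^{\otimes m}$ for $m\geq 2$ — in particular $\pi_V(1)=0$ for an empty factor $w_i$ — such a summand can only be nonzero when every $w_i$ is a single letter. The unique decomposition of $w$ into single letters is $k=n$, $w_i=v_i$, which contributes $\pi_V(v_1)\ldots\pi_V(v_n)=v_1\ldots v_n=w$; hence $w\ast 1=w$. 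Symmetrically, (\ref{eq1ter}) gives $1\ast w=w$, and $1\ast 1=1$ was already recorded in Lemma \ref{lemmaBinf}, so $1$ is a two-sided unit. Alternatively, one can avoid the combinatorics: $w\mapsto w\ast 1$ and $w\mapsto 1\ast w$ are coalgebra endomorphisms of $T(V)$ (composites of the coalgebra map $\ast$ with $\id\otimes\eta$ and $\eta\otimes\id$), their composites with $\pi_V$ are $\langle-,1\rangle=\pi_V$ and $\langle 1,-\rangle=\pi_V=\pi_V\circ\id_{T(V)}$, so by the uniqueness part of the universal property (Lemma \ref{cofreeuniv}) both endomorphisms equal $\id_{T(V)}$.

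There is no real obstacle here; the lemma is a direct unwinding of Lemma \ref{lemmaBinf}. The only point requiring a little care is the bookkeeping in the ``if'' direction: one must note that empty factors $w_i$ contribute $\pi_V(1)=0$ and that $\pi_V$ annihilates tensors of length different from $1$, so that the double sum in (\ref{eq1bis}) collapses to the single term $w$.
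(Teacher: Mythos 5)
Your proof is correct and follows essentially the same route as the paper, which simply notes that the lemma follows from the definitions of $\pi_V$, $\langle-,-\rangle$ and Eqs.~(\ref{eq1bis},\ref{eq1ter}); your explicit collapsing of the sums (empty factors and factors of length $\neq 1$ killed by $\pi_V$) is just the unwinding the paper leaves implicit. The alternative argument via the uniqueness part of Lemma \ref{cofreeuniv} is also valid and in the same spirit.
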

\begin{proof}
The assertion follows directly from the definitions of $\pi_V$, $\langle-,-\rangle$ and Eqs (\ref{eq1bis},\ref{eq1ter}). 
\end{proof}

We assume from now on that the product $\ast$ is unital, with unit $1$.

\begin{lemma}\label{lemmaassoc} With the same notation,
\begin{enumerate}
\item the product $*$ is associative if, and only if, for any $w,w',w''\in T_+(V)$, 
\[\langle w,w'*w''\rangle=\langle w*w',w''\rangle.\]
\item It is commutative if, and only if, for any $w,w'\in T_+(V)$,
\[\langle w,w'\rangle=\langle w',w\rangle.\]
\end{enumerate}
\end{lemma}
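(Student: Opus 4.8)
The plan is to leverage the couniversal property of the cofree conilpotent coalgebra $(T(V),\Delta,\pi_V)$: two morphisms of conilpotent coalgebras into $T(V)$ coincide as soon as they agree after composition with $\pi_V$. The key preliminary observation is that both triple products $(x,y,z)\mapsto (x*y)*z$ and $(x,y,z)\mapsto x*(y*z)$, viewed as maps $m_l,m_r\colon T(V)^{\otimes 3}\to T(V)$, are composites of coalgebra morphisms (namely $*$, which is a coalgebra map by hypothesis, together with identity maps and the canonical isomorphisms for tensor products of coalgebras), hence are themselves coalgebra morphisms; here one uses that $T(V)^{\otimes 3}$, being a tensor product of connected graded coalgebras, is again connected graded and therefore conilpotent by Lemma~\ref{lemmeconil}, so that the couniversal property is available. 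It follows that $m_l=m_r$, i.e. that $*$ is associative, if and only if $\pi_V\circ m_l=\pi_V\circ m_r$; and since $\langle-,-\rangle=\pi_V\circ *$ on $T(V)\otimes T(V)$ (Lemma~\ref{lemmaBinf}), this says exactly $\langle x*y,z\rangle=\langle x,y*z\rangle$ for all $x,y,z\in T(V)$.

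The only remaining point for part (1) is to identify this identity on all of $T(V)$ with the stated identity on $T_+(V)$. One implication is immediate; for the converse I would check, using unitality in the form $\langle 1,-\rangle=\langle-,1\rangle=\pi_V$ (Lemma~\ref{lemmaunit}) together with $1*w=w*1=w$, that when one of the three arguments is the empty word both sides collapse to $\pi_V(w'*w'')$, to $\pi_V(w*w')$, or to $\langle w,w''\rangle$ respectively, the cases with two or three trivial arguments being subsumed by these. This is the only step involving any bookkeeping, and it is routine.

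For part (2) the same scheme applies, now with the flip $\tau\colon T(V)\otimes T(V)\to T(V)\otimes T(V)$, $x\otimes y\mapsto y\otimes x$, which is an isomorphism of coalgebras. Thus $*$ and $*\circ\tau$ are both coalgebra morphisms into $T(V)$, so they agree, i.e. $*$ is commutative, if and only if $\pi_V\circ *=\pi_V\circ *\circ\tau$, that is $\langle x,y\rangle=\langle y,x\rangle$ for all $x,y\in T(V)$; and since $\langle 1,x\rangle=\pi_V(x)=\langle x,1\rangle$ automatically by Lemma~\ref{lemmaunit}, this is equivalent to the symmetry $\langle w,w'\rangle=\langle w',w\rangle$ on $T_+(V)$.

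I do not anticipate any genuine obstacle: the argument is purely an application of cofreeness. The two mildly delicate points are verifying the conilpotency of the relevant tensor powers of $T(V)$ (needed to invoke the couniversal property on $T(V)^{\otimes 3}$) and the elementary reduction, via the unit, between the bracket identities on $T(V)$ and on $T_+(V)$; both are dispatched as indicated.
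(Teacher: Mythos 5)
Your argument is correct and is essentially the paper's own proof: both treat $\ast\circ(\id\otimes\ast)$ and $\ast\circ(\ast\otimes\id)$ (resp.\ $\ast$ and $\ast\circ\tau$) as coalgebra morphisms into the cofree coalgebra $(T(V),\Delta,\pi_V)$, so that they coincide iff their compositions with $\pi_V$ do. The only difference is that you spell out the routine reduction, via unitality, from the bracket identity on $T(V)$ to the one on $T_+(V)$, which the paper leaves implicit.
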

\begin{proof}
As the product is a map of coalgebra, so are the maps $\ast\circ(Id\otimes\ast)$ and $\ast\circ(\ast\otimes Id)$ from $T(V)^{\otimes 3}$ to $T(V)$. They are therefore entirely characterized by the composition with $\pi_V$ and associativity follows from \[\pi_V\circ ( \ast\circ(Id\otimes\ast))=\pi_V\circ (\ast\circ(\ast\otimes Id)).\]
The second assertion is proved similarly, noticing that the twist map $w\otimes w'\to w'\otimes w$ is a morphism of coalgebras.
\end{proof}
\begin{defi}
A $B_\infty$-structure on $V$ is a map $\langle-,-\rangle:T(V)\otimes T(V)\longrightarrow V$, such that:
\begin{itemize}
\item For any word $v_1\ldots v_n\in T(V)$, 
\[\langle 1,v_1\ldots v_n\rangle=\langle v_1\ldots v_n,1\rangle=\begin{cases}
v_1\mbox{ if }n=1,\\
0\mbox{ otherwise}.
\end{cases}\]
\item For any words $w,w',w''\in T_+(V)$,
\[\langle w,w'*w''\rangle=\langle w*w',w''\rangle,\]
where $*$ is defined by
\begin{align}
\label{eq2} w*w'&=\sum_{k=1}^\infty \sum_{\substack{w=w_1\ldots w_k,\\ w'=w'_1\ldots w'_k}} \langle w_1,w'_1\rangle\ldots \langle w_k,w'_k\rangle.
\end{align}
\end{itemize}
We shall say that $\langle-,-\rangle$ is commutative if for any $w,w'\in T_+(V)$, $\langle w,w'\rangle=\langle w',w\rangle$.
We shall say that $\langle-,-\rangle$ is trivial if for furthermore any $w,w'\in T_+(V)$, $\langle w,w'\rangle=0$.
\end{defi}

\begin{remark}\label{rkrew} Equation
(\ref{eq2}) can be rewritten in this way: for any $v_1,\ldots,v_{k+l}\in V$,
\begin{align*}
v_1\ldots v_k*v_{k+1}\ldots v_{k+l}&=\sum_{n=1}^{k+l}
\sum_{\substack{\sigma:[k+l]\twoheadrightarrow [n], \\ \sigma(1)\leq \ldots \leq \sigma(k),\\ \sigma(k+1)\leq \ldots \leq \sigma(k+l)}}
\langle v_{\sigma^{-1}(1)}\rangle\ldots \langle v_{\sigma^{-1}(n)}\rangle,
\end{align*}
with the following notation: if $I=\{i_1,\ldots,i_q\}\subseteq [n]$, with $i_1<\ldots<i_p\leq k< i_{p+1}<\ldots <i_q$,
\[v_I=v_{i_1}\ldots v_{i_p}\otimes v_{i_{p+1}}\ldots v_{i_q}.\]
In particular we get the Lemma:
\end{remark}
\begin{lemma}\label{trivial}
The $B_\infty$-algebra structure is trivial if and only if the product $\ast$ is the shuffle product\footnote{Shuffle and quasi-shuffle products can be commutative (as usually the case in Lie theory) or not (as usually in classical algebraic topology where shuffle products appear in relation to cartesian products of simplices). In the present article they will be always commutative, excepted in the example of quasi-shuffle Hopf algebras over an associative algebra where the quasi-shuffle product is noncommutative when the algebra is noncommutative.}, $*=\shuffle$, with
\begin{align*}
v_1\ldots v_k*v_{k+1}\ldots v_{k+l}&=
\sum_{\substack{\sigma:[k+l]\twoheadrightarrow [k+l], \\ \sigma(1)< \ldots < \sigma(k),\\ \sigma(k+1)< \ldots < \sigma(k+l)}}
 v_{\sigma^{-1}(1)}\ldots v_{\sigma^{-1}(n)}.
\end{align*}
See Example \ref{shuffle} for another classical definition of the shuffle product.
In that case, the Hopf algebra $(T(V),\shuffle,\Delta)$ is called the shuffle Hopf algebra over $V$.
\end{lemma}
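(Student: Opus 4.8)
The plan is to read off both implications directly from the explicit formula expressing $\ast$ in terms of $\langle-,-\rangle$, with no genuine coalgebra work needed.

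For the forward direction I would assume $\langle-,-\rangle$ is trivial and expand, for $w,w'\in T_+(V)$,
\[w*w'=\sum_{k\ge 1}\sum_{\substack{w=w_1\ldots w_k\\ w'=w'_1\ldots w'_k}}\langle w_1,w'_1\rangle\ldots\langle w_k,w'_k\rangle\]
as in Lemma \ref{lemmaBinf} (here the $w_i,w'_j$ may be empty). The key observation is that, by the unit normalisation of a $B_\infty$-structure together with $\langle u,u'\rangle=0$ for $u,u'\in T_+(V)$, a factor $\langle w_i,w'_i\rangle$ is nonzero exactly when one of $w_i,w'_i$ is a single letter and the other is the empty word, in which case it equals that letter. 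Hence the only surviving decompositions have $k=\ell(w)+\ell(w')$ and correspond, slot by slot, to a choice of ``next letter of $w$'' or ``next letter of $w'$'', i.e. to the $(\ell(w),\ell(w'))$-interleavings of the two words; summing the associated words yields $w*w'=w\shuffle w'$, matching the formula for $\shuffle$ in the statement. Equivalently one may specialise Remark \ref{rkrew}: triviality forces every fibre $\sigma^{-1}(i)$ to be a singleton, so $\sigma$ must be a bijection and the weak inequalities $\sigma(1)\le\dots\le\sigma(k)$, $\sigma(k+1)\le\dots\le\sigma(k+l)$ become strict, i.e. $\sigma$ ranges over the $(k,l)$-shuffles.

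For the converse I would use that, by Lemma \ref{lemmaBinf}, the bracket is recovered as $\langle-,-\rangle=\pi_V\circ\ast$. If $\ast=\shuffle$, then for $w,w'\in T_+(V)$ every word occurring in $w\shuffle w'$ has length $\ell(w)+\ell(w')\ge 2$, so $\pi_V(w\shuffle w')=0$, whence $\langle w,w'\rangle=0$; the unit normalisation follows from $1\shuffle v_1\ldots v_n=v_1\ldots v_n=v_1\ldots v_n\shuffle 1$ and the definition of $\pi_V$ (or directly from Lemma \ref{lemmaunit}). Hence $\langle-,-\rangle$ is trivial.

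The whole argument is essentially bookkeeping; the only point requiring a little care is unwinding the notation $\langle v_{\sigma^{-1}(i)}\rangle$ of Remark \ref{rkrew} (or, in the first form above, tracking which of $w_i,w'_i$ is permitted to be empty), together with the elementary fact that a surjection $[k+l]\twoheadrightarrow[n]$ all of whose fibres are singletons must be a bijection, so that the weakly monotone ``quasi-shuffle'' index set degenerates to the set of honest $(k,l)$-shuffles.
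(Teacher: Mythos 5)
Your proposal is correct and follows essentially the same route as the paper: the lemma there is obtained precisely by specialising the explicit formula of Lemma \ref{lemmaBinf} in the form of Remark \ref{rkrew}, observing that triviality together with the unit normalisation kills every slot except those picking a single letter from one of the two words, so that only the strict (i.e.\ bijective) shuffles survive. Your extra verification of the converse via $\langle-,-\rangle=\pi_V\circ\ast$ is the same easy observation the paper leaves implicit, so there is nothing to add.
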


\begin{defi}
A  $B_\infty$-algebra  (resp. commutative) is a vector space $V$ equipped with a $B_\infty$-structure (resp. commutative). 
\end{defi}

\begin{prop}\label{equivdef}
Let $P(V)$ be the set of products $*$ on $T(V)$ making $(T(V),*,\Delta)$ a bialgebra and by $B_\infty(V)$ the set of $B_\infty$-algebra structures on $V$.
The following map is a bijection:
\[\Theta:\left\{\begin{array}{rcl}
P(V)&\longrightarrow&B_\infty(V)\\
*&\longmapsto&\pi_V\circ *.
\end{array}\right.\]
Denoting by $P^c(V)$ the set of commutative products on $T(V)$ making $(T(V),*,\Delta)$ a bialgebra and by $B_\infty^c(V)$ of commutative $B_\infty$-algebra structures
 on $V$, $\Theta$ induces a bijection from $P^c(V)$ to $B_\infty^c(V)$.
\end{prop}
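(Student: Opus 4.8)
The plan is to show that $\Theta$ is well-defined, injective and surjective, and that it restricts to a bijection on the commutative parts. That $\Theta$ is well-defined follows from Lemmas \ref{lemmaunit}, \ref{lemmaassoc}: if $*\in P(V)$ then $*$ is a unital associative product on $T(V)$ compatible with $\Delta$, so by Lemma \ref{lemmaunit} the map $\langle-,-\rangle:=\pi_V\circ *$ satisfies $\langle-,1\rangle=\langle 1,-\rangle=\pi_V$, which is exactly the first axiom of a $B_\infty$-structure; by Lemma \ref{lemmaassoc}(1) associativity of $*$ gives $\langle w,w'*w''\rangle=\langle w*w',w''\rangle$ for $w,w',w''\in T_+(V)$, which is the second axiom (one must observe that, by Lemma \ref{lemmaBinf}, the auxiliary product reconstructed from $\langle-,-\rangle$ via Eq.~(\ref{eq2}) coincides with the original $*$, so the occurrence of $*$ inside the associativity axiom is unambiguous). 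Hence $\Theta(*)=\pi_V\circ*\in B_\infty(V)$.

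Injectivity is immediate from Lemma \ref{lemmaBinf}: a coalgebra map $*:T(V)\otimes T(V)\to T(V)$ is entirely determined by $\pi_V\circ *$ via formula (\ref{eq1}) (together with (\ref{eq1bis}), (\ref{eq1ter}) and $1*1=1$), so two products in $P(V)$ with the same image under $\Theta$ are equal. For surjectivity, start from $\langle-,-\rangle\in B_\infty(V)$ and \emph{define} $*:T(V)\otimes T(V)\to T(V)$ by Eq.~(\ref{eq2}) on $T_+(V)\otimes T_+(V)$, by $1*w=\sum_{w=w_1\ldots w_k}\langle 1,w_1\rangle\ldots\langle 1,w_k\rangle$ and symmetrically for $w*1$, and by $1*1=1$. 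By Lemma \ref{cofreeuniv} (applied with $D=T(V)\otimes T(V)$, which is a connected graded, hence conilpotent, coalgebra, and the filtered map $\langle-,-\rangle$ from $\overline{T(V)\otimes T(V)}$ to $V$), this $*$ is the unique coalgebra morphism with $\pi_V\circ *=\langle-,-\rangle$; it is therefore a coalgebra map by construction. The first $B_\infty$-axiom gives, via Lemma \ref{lemmaunit}, that $*$ is unital with unit $1$; the second axiom gives, via Lemma \ref{lemmaassoc}(1), associativity (here again one uses that the $*$ appearing in the axiom is reconstructed by the very formula (\ref{eq2}) defining our $*$, so the axiom literally says $\pi_V\circ(*\circ(\id\otimes *))=\pi_V\circ(*\circ(*\otimes\id))$ on $T_+(V)^{\otimes 3}$, and one extends this to all of $T(V)^{\otimes 3}$ using unitality and the fact that both sides are coalgebra maps determined by their projection to $V$). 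Thus $(T(V),*,\Delta)$ is a bialgebra, $*\in P(V)$, and $\Theta(*)=\langle-,-\rangle$.

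For the commutative refinement: if $*\in P^c(V)$ then $\langle w,w'\rangle=\pi_V(w*w')=\pi_V(w'*w)=\langle w',w\rangle$, so $\Theta(*)\in B_\infty^c(V)$; conversely if $\langle-,-\rangle\in B_\infty^c(V)$, then the reconstructed $*$ is commutative by Lemma \ref{lemmaassoc}(2) (the twist is a coalgebra morphism, so $*$ and $*\circ\tau$ are coalgebra maps with the same projection $\langle-,-\rangle=\langle-,-\rangle\circ\tau$ to $V$, hence equal). Combined with the bijection already established, this shows $\Theta$ restricts to a bijection $P^c(V)\to B_\infty^c(V)$.

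The only genuinely delicate point — and the step I would write out most carefully — is the bookkeeping around the word ``$*$'' in the associativity axiom of a $B_\infty$-structure: one must check that the product reconstructed from $\langle-,-\rangle$ by Eq.~(\ref{eq2}) is the \emph{same} operation whether one thinks of it as ``the coalgebra map with projection $\langle-,-\rangle$'' or as ``the explicit combinatorial sum'', so that translating between the $B_\infty$-axioms and the bialgebra axioms via Lemmas \ref{lemmaBinf}--\ref{lemmaassoc} is legitimate. Everything else is a direct application of the cofreeness of $(T(V),\Delta,\pi_V)$ (a coalgebra map into $T(V)$ is determined by its projection to $V$) together with the three preceding lemmas.
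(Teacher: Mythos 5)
Your proof is correct and follows essentially the same route as the paper's: well-definedness via Lemmas \ref{lemmaBinf}, \ref{lemmaunit} and \ref{lemmaassoc}, injectivity and surjectivity via the cofreeness of $(T(V),\Delta,\pi_V)$ (Lemma \ref{cofreeuniv}) and the explicit reconstruction formula (\ref{eq1}), and the commutative refinement via the second item of Lemma \ref{lemmaassoc}. The only difference is that you spell out more explicitly the bookkeeping the paper leaves implicit (that the product reconstructed from $\langle-,-\rangle$ coincides with the original $*$, and the extension of the associativity check from $T_+(V)^{\otimes 3}$ to $T(V)^{\otimes 3}$ using unitality), which is a legitimate elaboration rather than a different argument.
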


\begin{proof} As $1$ is the unique group-like of $T(V)$, it is necessarily the unit for the product $\ast$.
By Lemmas \ref{lemmaBinf}, \ref{lemmaunit} and \ref{lemmaassoc}, $\Theta$ is well-defined. If $*\in P(V)$, then it is a coalgebra morphism from $T(V)\otimes T(V)$ to $T(V)$.
 By Lemma \ref{cofreeuniv}, $\Theta$ is injective.
 
 Conversely,
let $\langle-,-\rangle$ in $B_\infty(V)$. The product $*$ associated to it by (\ref{eq1}) is a coalgebra map. It is associative by Lemma \ref{lemmaassoc}
and has $1$ for a unit: $*\in P(V)$, and $\Theta(*)=\langle-,-\rangle$. Thus, $\Theta$ is a bijection.

By the last item of Lemma \ref{lemmaassoc}, $\Theta(P^c(V))=B_\infty^c(V)$. 
\end{proof}

\begin{defi}\label{envbinf}
The bialgebra $(T(V),*,\Delta)$ associated to a $B_\infty$-algebra structure $\langle\ ,\ \rangle$ on $V$ is called the $B_\infty$-enveloping algebra of $(V,\langle\ ,\ \rangle)$. For simplicity, we will abusively also say later on that $(T(V),*,\Delta)$ is a $B_\infty$-algebra, identifying implicitly the data of a $B_\infty$-algebra structure on $V$ with the data of a $B_\infty$-enveloping algebra structure on $T(V)$.
\end{defi}

\section{Graded $B_\infty$-algebras}\label{gba}

Let us consider now the notion of $B_\infty$-algebra in the category of graded vector spaces. We shall see later that many classical examples of $B_\infty$-algebras have such a structure.
We assume therefore in this section that $V$ is a graded vector space. For brevity we do not repeat all definitions in the previous section: they have to be adapted as follows
\begin{itemize}
\item replacing everywhere $T(V)$ by its graded version $T^g(V)$,
\item requiring that {\it all maps} be maps of graded vector spaces,
\item requiring in particular that the product $\ast$ resp. the structure map $\langle\ ,\  \rangle$ be maps of graded coalgebras resp. of graded vector spaces (and {\it not} of filtered-graded coalgebras resp. filtered vector spaces!).
\end{itemize}
For example, the definition of cofree Hopf algebras reads in the graded case:
\begin{defi}
Let $(H,*,\Delta)$ be a connected graded Hopf algebra. A cofree graded structure on $H$ is the data of a graded structure map $\pi:H\to \prim(H)$ making the coalgebra $(H,\Delta)$ a cofree graded coalgebra over $\prim(H)$. We will say that $(H,*,\Delta,\pi)$ (or simply $(H,\pi)$ when the underlying Hopf algebra structure is obvious) is a cofree graded Hopf algebra.
\end{defi}

Using the point of view of Definition \ref{envbinf},
let  $H=(T^g(V),*,\Delta)$ be the $B_\infty$ enveloping algebra of $(V,\langle\ ,\  \rangle)$ in the category of graded vector spaces. It is equipped with a graded connected Hopf algebra structure and we say that $T^g(V)$ is then a graded $B_\infty$-enveloping algebra. For simplicity, we will abusively also say that $(T^g(V),*,\Delta)$ is a graded $B_\infty$-algebra. 

\begin{defi}
When the $B_\infty$ structure is trivial, that is when $\langle w,w'\rangle =0$ for words both of length greater or equal 1, the product $\ast$ is the shuffle product $\shuffle$, $H=(T^g(V),\shuffle,\Delta)$ is a graded Hopf algebra, and we say that it is a standard graded $B_\infty$-algebra.
\end{defi}

\begin{remark} A direct inspection of the formulas defining $B_\infty$-enveloping algebras shows that $(T(V),*,\Delta)$ is a graded Hopf algebra for the tensor degree on $T(V)$ if and only if the $B_\infty$-algebra structure is trivial (that is, $<w,w' >=0$ for $w,w'\in T_+(V)$ and the product is the shuffle product). So, if $V=V_1$, there is a unique graded $B_\infty$-algebra structure on $V$: the trivial one.
\end{remark}

Let now $H=(T^g(V),*,\Delta)$ be a locally finite graded commutative $B_\infty$-enveloping algebra (where locally finite means that the graded components $H_n$ are finite dimensional). The graded dual cocommutative Hopf algebra $H^\ast=\bigoplus\limits_{n\in\mathbb N}H_n^\ast$ is a free associative algebra generated by $\widehat{V}^\ast:= \bigoplus\limits_{n\geq 1}V_n^\ast$. That is, up to a canonical isomorphism, $H^\ast=T(\widehat{V}^\ast)$, the tensor algebra over $\widehat{V}^\ast$ equipped with the concatenation product. 
More generally, if $(H,*,\Delta,\pi)$ is a locally finite cofree graded commutative Hopf algebra, $H^\ast\cong T(\widehat{\prim(H)}^*)$.
The following definition characterizes such Hopf algebras.

\begin{defi}
A localy finite graded connected cocommutative Hopf algebra $H$ freely generated as an associative algebra by a graded subspace $W$ is called  a free Lie-type Hopf algebra. When $W\subseteq\prim(H)$ (that is, when $H$ is primitively generated by $W$), we say that $H$ is a standard free-Lie type Hopf algebra. 
\end{defi}
The reasons for this terminological choice will become clear later on: we will show that such a Hopf algebra is always canonically isomorphic to the enveloping algebra of a free Lie algebra.

Recall already that when $H$ is  a standard free-Lie type Hopf algebra, by standard results in the theory of free Lie algebras it is automatically canonically isomorphic to the enveloping algebra of the free Lie algebra generated by $W$. In that case, the graded dual Hopf algebra is (up to a canonical isomorphism) the shuffle Hopf algebra  over $\hat W^\ast$, so that standard free Lie-type Hopf algebras and standard graded $B_\infty$-algebras are in duality. See \cite{Reutenauer} for details on the duality between enveloping algebras of free Lie algebras and shuffle Hopf algebras.

\section{Fundamental examples}\label{FE}

\begin{example}[Free Lie algebras]\label{flalg}
Let us start with one of the simplest possible non trivial example: the free graded associative algebra $A=\mathbb Q\langle x,y\rangle$ on two generators, $x$ of degree 1 and $y$ of degree 2 (so that, for example, the word $xyxy^2$ is of degree 8).
It is the enveloping algebra of the free Lie algebra over $x$ and $y$ (using the rewriting trick $[a,b]=ab-ba$ to expand iterated commutators in the free Lie algebra into sums of words). It is then natural to equip $A$ with a standard free-Lie type Hopf algebra structure by requiring $x$ and $y$ to be primitive elements (as $A$ is a free associative algebra, this choice entirely determines the Hopf algebra structure on $A$).

Consider now the graded dual of $A$, denoted $A^\ast$ and write $x^\ast$ and $y^\ast$ for the elements dual to $x$ and $y$ in the basis of words. In general, if $y_1\ldots y_n$ is a word in the letters $x$ and $y$ we will write $y_1^\ast\ldots y_n^\ast$ for the corresponding element in the dual in the basis of words over $x$ and $y$.
Writing $V$ for the linear span of $x^\ast$ and $y^\ast$ with the graduation $|x^\ast|=1,\ |y^\ast|=2$, $A^\ast$ identifies to $T^g(V)$ equipped with the deconcatenation product. That is, $A^\ast$ identifies with the cofree graded coalgebra $T^g(V)$ over $V$ with structure map $\pi_V$ mapping $y_1^\ast\ldots y_n^\ast$ to $ 0$ if $n\geq 2$ and mapping $y_1^*\to y_1^*$, where $y_i\in\{x,y\},\ i=1\ldots n$. 

Dualizing the coproduct of $A$ to obtain a product $\times$, $T^g(V)$ becomes a standard graded $B_\infty$-algebra and identifies as a Hopf algebra to the shuffle Hopf algebra over $x^\ast$ and $y^\ast$ (see Remark \ref{rkrew} and Example \ref{shuffle}). In particular, 
\[x^\ast \times x^\ast =x^\ast\shuffle x^\ast= 2 x^\ast x^\ast.\]
See \cite{Reutenauer} for details on these constructions and definitions. The corresponding $B_\infty$-algebra structure on $V$, obtained through the projection $\pi_V$, is trivial.

Consider now the change of variables $t:=x,\ z:=y+x^2$ (it is convenient to notationally distinguish $x$ and $t$). One can rewrite $A=\mathbb Q\langle t,z\rangle$ but $A$ is not a standard free Lie-type Hopf algebra any more with respect to the linear span of $t$ and $z$, as $z$ is not primitive: $\overline \Delta(z)=2 t\otimes t$. Let us use the same notation as above: if $y_1\ldots y_n$ is a word in the letters $t$ and $z$ we will write $y_1^\ast\ldots y_n^\ast$ for the corresponding element in the dual basis to the basis of words over $t$ and $z$. For example, as
\begin{align*}
t&=x,&z&=y+x^2,&zt&=yx+x^3,\\
&&t^2&=x^2,&tz&=xy+x^3,\\
&&&&t^3&=x^3,
\end{align*}
we obtain that
\begin{align*}
x^\ast&=t^\ast,&y^\ast&=z^\ast,&y^\ast x^\ast&=z^\ast t^\ast,\\
&&x^\ast x^\ast&=t^\ast t^\ast+z^\ast,&x^\ast y^\ast&=t^\ast z^\ast,\\
&&&&x^\ast x^\ast x^\ast&=t^\ast t^\ast t^\ast+t^\ast z^\ast+z^\ast t^\ast.
\end{align*}

Take care that with this notation $x^\ast=t^\ast$ and $y^\ast=z^\ast$ but $(t^\ast)^n\not= (x^\ast)^n$ in general. 
The coalgebra $A^\ast$ identifies now to $T(W)$, where $W$ is the linear span of $z^\ast$ and $t^\ast$, equipped with the deconcatenation product (the cofree coalgebra over $W$ with structure map $\pi_W$, mapping $y_1^\ast\ldots y_n^\ast$ to $ 0$ if $n\geq 2$ and maps $y_1^\ast\to y_1^\ast$, where $y_1,\ldots,y_n$ belong now to $\{t,z\}$). 

The two graded vector spaces $V$ and $W$ identify, but we distinguish them notationally as the two cofree graded coalgebras $T(V)$ and $T(W)$ do not. This is the case in particular  because the two projections $\pi_V$ and $\pi_W$ are different: for example, $\pi_W(x^\ast)=x^\ast$, $\pi_W(y^\ast)=z^\ast$, $\pi_W(x^\ast x^\ast)=z^\ast$ whereas $\pi_V(x^\ast)=x^\ast$, $\pi_V(y^\ast)=y^\ast$,$\pi_V(x^\ast x^\ast)=0$.
Conversely, $\pi_V(t^\ast)=x^\ast, \ \pi_V(z^\ast)=y^\ast,\ \pi_V(t^\ast t^\ast)=-y^\ast$. Both $\pi_V$ and $\pi_W$ are the null map on all words of degree greater or equal $3$.

We also get
\[t^\ast\times t^\ast=2t^\ast t^\ast+2z^\ast\]
as 
\[<t^\ast\times t^\ast|tt>=<t^\ast\otimes t^\ast|\Delta(tt)>=<t^\ast\otimes t^\ast|tt\otimes 1+2t\otimes t+1\otimes tt>=2\]
and
\[<t^\ast\times t^\ast|z>=<t^\ast\otimes t^\ast|\Delta(z)>=<t^\ast\otimes t^\ast|z\otimes 1+2t\otimes t+1\otimes z>=2.\]
One obtains thus using $\pi_W$ another graded commutative $B_\infty$-algebra structure  $\langle-,-\rangle'$ on $W$ which however is {\it not} as a Hopf algebra the shuffle algebra over $t^\ast$ and $z^\ast$. 
We indeed obtain that $\langle t^\ast,t^\ast\rangle'=2z^\ast$, and $\langle- ,-\rangle'$ is $0$ on all other pairs of words in $t^\ast$ and $z^\ast$. 
This illustrates the general idea that a cofree graded commutative Hopf algebra structure on $H$ is actually obtained as a graded connected commutative Hopf algebra structure {\it plus} the choice of a basis of words making it a cofree coalgebra over the corresponding letters (up to the choice of a basis of $\prim(H)$). There are infinitely many such choices (the basis of $\prim(H)$ being fixed), as our example implies. 

The change of basis in $A$ from words in $x,y$ to words in $t,z$ is obtained simply by substituting $t$ for $x$ and $z-tt$ for $y$ (and conversely $x$ for $t$ and $y+xx$ for $z$). The change of basis in $A^\ast$ from words in $x^\ast,y^\ast$ to words in $t^\ast,z^\ast$ is slightly more delicate but follows directly from Lemma \ref{cofreeuniv}: it is given by
$$y_1^\ast\ldots y_n^\ast\longmapsto \sum\limits_{k\in\mathbb N^*}\pi_W^{\otimes k}\circ\overline\Delta_k(y_1^\ast\ldots y_n^\ast)$$
(with the $y_i$ in $\{x,y\}$) and the inverse map by 
$$y_1^\ast\ldots y_n^\ast\longmapsto \sum\limits_{k\in\mathbb N^*}\pi_V^{\otimes k}\circ\overline\Delta_k(y_1^\ast\ldots y_n^\ast)$$
(with the $y_i$ in $\{t,z\}$).
For example, using this rule,
\begin{align*}
x^\ast y^\ast x^\ast x^\ast y^\ast &= \pi_W^{\otimes 4}(x^\ast\otimes y^\ast\otimes x^\ast x^\ast\otimes y^\ast)+\pi_W^{\otimes 5}(x^\ast\otimes y^\ast\otimes x^\ast\otimes x^\ast\otimes y^\ast)\\
&=t^\ast z^\ast z^\ast z^\ast+t^\ast z^\ast t^\ast t^\ast z^\ast .
\end{align*}

\end{example}
\begin{example}[Shuffle and quasi-shuffle Hopf algebra]\label{shuffle}
Let $\cdot$ be an associative, not necessarily unitary, product on $V$. We extend it to a $B_\infty$-structure on $V$ by putting, for any words
$w,w'\in T(V)$,
\[\langle w,w'\rangle=\begin{cases}
0\mbox{ if }w=w'=1,\\
w'\mbox{ if }w=1\mbox{ and }\ell(w')=1,\\
w\mbox{ if }\ell(w)=1\mbox{ and }w'=1,\\
w\cdot w'\mbox{ if }\ell(w)=\ell(w)'=1,\\
0\mbox{ otherwise},
\end{cases}\]
where $\ell(w)$ stands for the length of the word $w$.
The associated product on $T(V)$ is called the quasi-shuffle product $\squplus$. It equips $T(V)$ with a Hopf algebra structure.
It is commutative, if, and only if, $\cdot$ is commutative.
In the particular case where $\cdot=0$, we obtain the shuffle product $\shuffle$ (see also Remark \ref{rkrew}).

The quasi-shuffle product is classically inductively (and equivalently) defined by the equations $v\squplus 1=1\squplus v=v$ and
\begin{align*}
v_1\ldots v_k\squplus v_{k+1}\ldots v_{k+l}&:=v_1(v_2\ldots v_k\squplus v_{k+1}\ldots v_{k+l})\\
&+v_{k+1}(v_1\ldots v_k\squplus v_{k+2}\ldots v_{k+l})\\
&+(v_1\cdot v_{k+1})(v_2\ldots v_k\squplus v_{k+2}\ldots v_{k+l}),
\end{align*}
which restricts to 
\begin{align*}
v_1\ldots v_k\shuffle v_{k+1}\ldots v_{k+l}&:=v_1(v_2\ldots v_k\shuffle v_{k+1}\ldots v_{k+l})\\
&+v_{k+1}(v_1\ldots v_k\shuffle v_{k+2}\ldots v_{k+l}),
\end{align*}
for the shuffle product.
See also e.g. \cite{foissy2} for details on shuffle and quasi-shuffle algebras and their relationships, to be generalized below in the present article.

Commutative quasi-shuffle algebras are at the moment the most important example of commutative $B_\infty$-algebras. They can be used to encode certain relations between multi-zeta values and generalizations of them, see \cite{Hoffman2,Hoffman3} for a review on this topic.
They play a prominent role in the study of Rota-Baxter algebras \cite{Guo,Guo2,EFG}.
They are used not only in combinatorics and algebra as they have for example applications also in stochastics, where they allow to better understand the equivalence between It\^o and Stratonovich integrals, resp. solutions of stochastic differential equations (It\^o calculus being encoded by quasi-shuffle algebras and Stratonovich's by shuffle algebras) \cite{ebrahimi2015flows,ebrahimi2015exponential}.
\end{example}

\begin{example}[Graded shuffle and quasi-shuffle Hopf algebra]\label{gshuffle}
Assume now that $V=\bigoplus\limits_{n\geq 1}V_n$ is a graded vector space. Let then  $\cdot$ be an associative, commutative and graded product on $V$.
The standard example is the case where $V$ is the semigroup ring of a positively graded commutative semigroup.

The degree of an element $v$ in $V_n$ is denoted $|v|$ ($|v|:=n$). Given $v_i\in V_{k_i},\ i=1\dots n$, the $qs$-degree (quasi-shuffle degree) of $v_1\ldots v_n\in V^{\otimes n}$ is, by definition, $|v_1\ldots v_n|:=k_1+\ldots +k_n$. This is the grading we have previously introduced on $T^g(V)$.
Direct inspection shows that the $qs$-degree defines a cofree graded commutative Hopf algebra structure on the $(T(V),\squplus,\Delta)$, where $\squplus$ was defined in Example \ref{shuffle}, making $V$ a graded commutative $B_\infty$-algebra. 

One can show that there is an isomorphism (of Hopf algebras) between the shuffle algebra and the quasi-shuffle algebra of words over an arbitrary graded commutative semigroup. This isomorphism is known as Hoffman's isomorphism. It generalizes to the non graded case.
See e.g. \cite{Hoffman} for details on graded quasi-shuffle algebras and their relationship with shuffle algebras, to be generalized below in the present article. Graded quasi-shuffle and shuffle algebras are used in \cite{BFT} to investigate It\^o to Stratonovich transformations in the context of branched rough paths.
\end{example}

\begin{example}[The gebra of descents]\label{dsg}
Let us first introduce briefly the context of the following results. Together with Example \ref{flalg} they will serve as a benchmark for our later developments on graded commutative $B_\infty$-algebras.

A classical and fundamental theorem by C. Malvenuto in the theory of free Lie algebras, symmetric group representations and symmetric functions asserts that the dual of the Hopf algebra of descents in symmetric groups, whose definition is recalled below, is isomorphic to the Hopf algebra of quasi-symmetric functions \cite{Mal,MR}. There are several ways to understand this isomorphism, but one of them is particularly important for our purposes as it is the pattern we will use and generalize to investigate the structure of graded commutative $B_\infty$-algebras.

We postpone details to latter developments in the article, but sketch the fundamental idea. The Hopf algebra of descents $\desc$, whose definition is recalled below, is naturally graded, cocommutative.
As an algebra it is a free associative algebra and the exponential allows to move between a family of primitive generators to a family of group-like generators.  When dualizing, the  choice of a family of primitive generators amounts to considering the dual as equipped with a shuffle Hopf algebra structure, in the sense that the formulas for the product {\it in the corresponding basis} are the usual formulas for shuffle products. Choosing group-like generators amounts instead amounts to equip the dual with a specific graded commutative $B_\infty$-structure: up to isomorphism the one of the quasi-shuffle algebra over the semigroup algebra over the positive integers.
A change of basis in $\desc$ and its graded dual appears therefore as a prototype example for Hoffman's isomorphisms between shuffle and quasi-shuffle Hopf algebras, but there is more to be learned from that example, as we show now.

Let us explain the technical content of these ideas more precisely and recall the definition of the Hopf algebra of descents and some of its key properties, relevant to the present article. The reader is referred to \cite[Chap. 5]{CP21}, from which the following definitions and properties are taken, for more details.

\begin{defi}[Descent sets of permutations]\index{Descent! set} 
A permutation $\sigma$ in the $n$-th symmetric group $S_n$ is said to have a descent in position $i<n$ if and only if $\sigma(i)>\sigma(i+1)$. The set of descents of a permutation is denoted $desc(\sigma)$:
$$desc(\sigma):=\{i<n, \sigma(i)>\sigma(i+1)\}.$$
\end{defi}

To each  subset $S=S'\sqcup\{n\}$ of $[n]$ containing $n$ are associated two elements in the group algebra $\mathbb Q [S_n]$:
$$
	De_{=S}:=\sum\limits_{\sigma\in S_n \atop desc(\sigma)=S'}\sigma \ ,\quad 
	De_{S}:=\sum\limits_{\sigma\in S_n \atop desc(\sigma)\subset S'}\sigma.
$$
The $De_S$ (resp. the $De_{=S}$), $S=S'\sqcup\{n\}\subset [n]$ are linearly independent in $\mathbb Q [S_n]$. They have the same linear span, written $\desc$.

Let now $X=\{x_1,\dots,x_n,\dots\}$ be a countable alphabet and $T(X)$ the tensor gebra over $X$ equipped with a graded connected cocommutative Hopf algebra structure by the concatenation product and the unshuffle coproduct $\Delta_\shuffle$ (the coproduct dual to the shuffle product of words: $\Delta_\shuffle(y_1\dots y_n):=\sum\limits_{I\coprod J=[n]}y_I\otimes y_J$ where, if $I=\{i_1,\dots,i_k\}, y_I:=y_{i_1}\dots y_{i_k}$). 
Permutations in $S_n$ act on $T_n(X)$ on the right by permutation of the letters of words of length $n$ ($\sigma(y_1\dots y_n)=y_{\sigma(1)}\dots y_{\sigma(n)}$ where $y_i\in X,\ i\leq n$). The convolution product of linear endomorphisms of $T(X)$ induces then a graded algebra structure on the direct sum of linear spans of the symmetric groups $\bigoplus\limits_{n\in \mathbb N}\mathbb Q[S_n]$ that restricts to a graded algebra structure on $\desc$ (elements in $S_n$ being of degree $n$).
This is the standard way to connect the combinatorics of descents with the theory of free Lie algebras \cite{Reutenauer}. 

From the action of $\desc$ on $T(X)$ (by a process that holds actually more generally for all graded connected cocommutative Hopf algebras), one can derive the existence of a graded cocommutative Hopf algebra structure on $\desc$ and one gets the Theorem:

\begin{theo}
The descent algebra $\desc$ is a graded cocommutative Hopf algebra, freely generated as a unital associative algebra by any of the following families:
\begin{itemize}
\item The identity permutations $1_n$ in the groups $S_n$, $n\geq 1$, that form a group-like family (that is to say, for any $n\in \N$, $\Delta(1_n)=\sum\limits_{k=0}^n1_k\otimes 1_{n-k}$),
\item The Dynkin operators 
$$Dyn_n=\sum_{i=0}^{n-1}(-1)^iDe_{=\{1,\dots,i\}},$$
which are primitive elements,
\item Solomon's Eulerian idempotents
$$sol_n:=e_n^1=\sum\limits_{S=S'\sqcup\{n\}\subset[n]}\frac{(-1)^{|S'|}}{n}{n-1\choose |S'|}^{-1}De_{S},$$
which are primitive elements.
\end{itemize}
\end{theo}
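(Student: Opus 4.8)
The plan is to take two facts as input, both classical and part of the body of theory recalled above (\cite[Chap.~5]{CP21}, \cite{Reutenauer}, going back to Solomon): that $\desc$, equipped with the convolution product $\ast$ and the coproduct induced by its action on $T(X)$, is a graded connected cocommutative Hopf algebra; and that as a unital associative algebra it is free on one generator in each degree $n\ge 1$, i.e.\ $\desc\cong T(V)$ for a graded $V$ with $\dim V_n=1$ for $n\ge 1$ and $V_0=0$. In particular $\dim\desc_n=2^{n-1}$ and the space of indecomposables $Q(\desc):=\desc_+/(\desc_+\ast\desc_+)$ satisfies $\dim Q(\desc)_n=1$ for all $n\ge 1$. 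Granting this, I claim that \emph{any} graded family $\{a_n\}_{n\ge1}$ with $a_n\in\desc_n$ freely generates $\desc$ as a unital associative algebra provided the class $\overline{a_n}$ is nonzero in $Q(\desc)_n$ for every $n$; so the theorem reduces to one non-vanishing statement, to be checked for each of the three families.

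This reduction is soft. If $\overline{a_n}\ne0$, then since $\dim Q(\desc)_n=1$ the classes $\overline{a_n}$ span $Q(\desc)$; by the graded Nakayama argument (induction on degree, using $Q(\desc)_0=0$) the $a_n$ generate $\desc$ as an algebra, so the algebra morphism $\rho\colon T(W)\to\desc$, $a_n\mapsto a_n$, out of the tensor algebra on $W:=\bigoplus_{n\ge1}\K a_n$ is surjective. Both $T(W)$ and $\desc$ have dimension $2^{n-1}$ in degree $n\ge1$ (and $1$ in degree $0$); a degreewise surjection of graded vector spaces with equal finite dimensions is an isomorphism, so $\rho$ is an isomorphism and $\{a_n\}$ freely generates $\desc$.

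The substantive step is to prove $a_n\notin\desc_+\ast\desc_+$ for $a_n\in\{1_n,\ Dyn_n,\ sol_n\}$, and here a single observation does all the work: \emph{every element of $\desc_+\ast\desc_+$ annihilates the space $L=\prim\!\big(T(X),\Delta_\shuffle\big)$ of Lie elements.} Indeed $f\ast g=\mu\circ(f\otimes g)\circ\Delta_\shuffle$ with $\mu$ the concatenation product; for $x\in L$ one has $\Delta_\shuffle(x)=x\otimes 1+1\otimes x$, while a positive-degree element of $\desc$ kills the empty word, so $(f\ast g)(x)=0$ as soon as $f$ and $g$ both have positive degree. Conversely $L_n\neq0$ for all $n\ge1$, and none of the three candidates annihilates $L_n$: $1_n$ is the identity permutation, so it acts as $\id_{T_n(X)}$ and $1_n(x)=x$; $Dyn_n$ acts on $T_n(X)$ as the left-nested bracketing $y_1\cdots y_n\mapsto[\dots[y_1,y_2],\dots,y_n]$, so by the Dynkin--Specht--Wever theorem $\tfrac1n Dyn_n$ restricts to $\id_{L_n}$ and $Dyn_n(x)=nx$; and $sol_n=e_n^1$, Solomon's first Eulerian idempotent, is a Lie idempotent, hence also restricts to $\id_{L_n}$. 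In each case $a_n$ does not annihilate $L_n$, so $a_n\notin\desc_+\ast\desc_+$, i.e.\ $\overline{a_n}\ne0$, and the reduction concludes the freeness claims.

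It remains to record the coalgebraic refinements in the statement. The identity permutation is group-like: under the coproduct of $\desc$, the length-$k$ prefix and length-$(n-k)$ suffix of $1_n$ standardize to $1_k$ and $1_{n-k}$, so $\Delta(1_n)=\sum_{k=0}^n 1_k\otimes 1_{n-k}$. That $Dyn_n$ and $sol_n$ are primitive follows from the characterization (same references) that an element of $\desc_n$ is primitive for the external coproduct exactly when, acting on $T(X)$, it sends $T_n(X)$ into $L_n$ --- which $Dyn_n$ and $sol_n$ visibly do; alternatively $\sum_{n\ge1}sol_n\,t^n=\log_\ast\!\big(\sum_{n\ge0}1_n\,t^n\big)$, the logarithm of a group-like series being primitive (this also re-derives $\overline{sol_n}=\overline{1_n}$ in $Q(\desc)$). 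The genuine obstacle, I expect, is spotting the uniform annihilation lemma: without it one is pushed toward separating the three families through explicit expansions in the $De_S$-basis, which is unpleasant, whereas with it the only real external ingredient is the freeness of $\desc$ as an associative algebra --- precisely what promotes ``generates'' to ``freely generates''.
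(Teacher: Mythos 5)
The paper itself offers no proof of this theorem: it is recalled as a classical fact inside Example \ref{dsg}, with \cite[Chap.~5]{CP21} and \cite{Reutenauer} cited for details, so there is no in-paper argument to compare yours against line by line. Taken on its own terms, your argument is sound and well organized: the reduction (nonzero class in the indecomposables $Q(\desc)$ in each degree $\Rightarrow$ generation by graded Nakayama $\Rightarrow$ freeness by the degreewise dimension count $\dim\desc_n=2^{n-1}=\dim T(W)_n$) is correct, and the annihilation lemma --- every element of $\desc_+\ast\desc_+$ kills the Lie elements $L=\prim(T(X),\Delta_\shuffle)$ because positive-degree elements kill the empty word --- is exactly the right mechanism, and it does treat the three families uniformly since $1_n$, $Dyn_n$ and $sol_n$ act on $L_n$ as $\id$, $n\,\id$ and $\id$ respectively. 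The one point to flag is that your ``external ingredient'' (freeness of $\desc$ on one generator per degree) is, for the first family, essentially the claim being proved, so as written the $1_n$-case is borrowed rather than established. This is easy to repair, and you need less than you assume: your reduction only requires that $\desc$ be \emph{generated} by one element in each degree, and this follows in one line from the convolution identity $1_{i_1}\ast\cdots\ast 1_{i_k}=De_{\{i_1,i_1+i_2,\ldots,n\}}$, which shows that the products of the $1_{i}$'s indexed by compositions of $n$ sweep out the basis $(De_S)$ of $\desc_n$; this simultaneously proves the first bullet directly (generation plus the $2^{n-1}$ count gives freeness) and makes your treatment of $Dyn_n$ and $sol_n$ non-circular. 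Two smaller remarks: the primitivity of $Dyn_n$ and $sol_n$ is made to rest on the characterization ``$f\in\desc_n$ primitive $\Longleftrightarrow$ $f(T_n(X))\subseteq L_n$'', which is itself a nontrivial theorem of the same corpus --- your alternative $\log_\ast$-of-a-group-like argument for $sol_n$ is the cleaner, self-contained route once the first bullet is in place, and a similar series identity can be used for $Dyn_n$; and note that the coefficients $\frac{(-1)^{|S'|}}{n}\binom{n-1}{|S'|}^{-1}$ in the paper's display are those of the expansion of the Eulerian idempotent in the $De_{=S}$ basis (equivalently, over permutations graded by their number of descents), so your identification of $sol_n$ with $\log_\ast\bigl(\sum_n 1_n\bigr)$ matches the intended element even though the displayed formula uses $De_S$.
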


The Theorem implies that the family of Dynkin operators and the family of Eulerian idempotents define a standard free-Lie type Hopf algebra structure on $\desc$ (they are primitive elements and freely generate $\desc$ as an associative algebra). The graded dual Hopf algebra $\desc^*$ is a shuffle Hopf algebra in the corresponding bases. 

This is a general phenomenon: any family of Lie idempotents (elements in the descent algebra that project the tensor algebra onto the free Lie algebra) would define a shuffle Hopf algebra structure on $\desc$, and there are infinitely many of them. On Lie idempotents and their generalization to arbitrary connected cocommutative Hopf algebras, see \cite{PR99,PR02,PR02b}.

On the other hand, the family of the identity elements in symmetric groups defines a non standard free-Lie type Hopf algebra structure on $\desc$. Dualizing, one gets a graded commutative $B_\infty$-algebra structure on the space $\prim(\desc^*)$ whose associated Hopf algebra is not a shuffle Hopf algebra over the cogenerating vector space $\hat W^\ast$, where $W$ is the linear span of the identity elements in symmetric groups. The graded dual of $\desc$ is indeed a quasi-shuffle Hopf algebra over $\hat W^\ast$ --- see \cite{Mal,MR}. 

Choosing a family of generators of $\desc$ that would not be primitive nor group-like would lead to a graded commutative $B_\infty$-algebra structure on $\prim(\desc^*)$ whose associated Hopf algebra would not be a shuffle Hopf algebra nor a quasi-shuffle Hopf algebra in the corresponding basis. This also follows easily from the same references and can be checked directly --- we don't detail this point as the reason for such a claim should be clear from forthcoming developments.

\end{example}

\begin{example}[Natural deformations of shuffle Hopf algebras]
In a joint article with J.-Y. Thibon, we investigated and classified more generally natural deformations of the shuffle Hopf algebra
structure $\sh(A)$ which can be defined on the space of tensors over a
commutative algebra A (where natural means functorial) \cite{foissy2}. These deformations are parametrized by formal power series. To each such non trivial deformation corresponds a {\it non graded} commutative $B_\infty$-algebra structure on $A$. We refer the reader to our article for definitions and details (we do not use the language of $B_\infty$-algebras in that article but it should be clear how to interpret our results in these terms).
\end{example}

\begin{example}[Finite topologies]
In another direction, together with C. Malvenuto, we investigated commutative $B_\infty$ structures in the context of finite topologies (or, equivalently, preorders) and their links with shuffle Hopf algebras, introducing and featuring in particular the notion of Schur-Weyl categories of bialgebras \cite{FMP}. The fact that such structures appear very naturally (but in a non straightforward way) in the context of finite topologies and ordered structures provides, together with the homotopical algebra origin of these notions in \cite{GJ94}, further evidence for their naturalness.
More details will be given in section \ref{sectiontopo}.
\end{example}

\section{The structure of graded commutative $B_\infty$-algebras}\label{SGB}

Recall first the Poincar\'e-Birkhoff-Witt (PBW) theorem (we refer again to \cite{CP21} for details on the materials that follow). Let $L$ be a graded and reduced Lie algebra (reduced meaning that it has no component in degree 0). Its enveloping algebra $H$ is a graded connected cocommutative Hopf algebra and there is a canonical morphism $\iota$ from the space $S(L)$ of symmetric tensors over $L$ to $H$. This map is a coalgebra isomorphism and the decomposition of $S(L)$ according to tensor degrees induces a decomposition of the graded components of the enveloping algebra: $H_n=\bigoplus\limits_{k\leq n}H_n\cap S^k(L)$,
where we write $S^k(L)$ for the space of symmetric tensors in $L^{\otimes k}$.

The gebra of tensors $T(X)$ over a set $X$ is, when equipped with the concatenation product and the unshuffle coproduct $\Delta_{\shuffle}$, the enveloping algebra of the free Lie algebra over $X$. 
One of the key properties of
the Eulerian idempotents is that they project $T(X)$ to the free Lie algebra over $X$ according to the decomposition of $T(X)$ induced by the Poincar\'e-Birkhoff-Witt theorem (this is actually how Solomon defined them originally). This idea was generalized to arbitrary graded connected cocommutative or commutative Hopf algebras $H$ in \cite[Th. I,5,6 and Th. I,6,4]{patras1992}, see also \cite{patras1993decomposition,patras1994algebre,CP21}. 

In the cocommutative case, this leads to the definition of generalized Eulerian idempotents that project on the graded components of primitive part of the Hopf algebra in agreement with the Poincar\'e-Birkhoff-Witt decomposition. We will still write $e_n^1$ for the generalized Eulerian idempotent, acting on $H_n$ as a projector from $H_n$ to $H_n\cap \prim(H)$. Notice that these generalized idempotents {\it cannot} be constructed in general as elements of the symmetric group algebras. In particular they are not directly governed by the statistics of descents in symmetric groups --- the very reason for the name ``Eulerian idempotents'' used in the classical case. This is the reason why they should be preferably called ``canonical idempotents'' --- we use below the two terminologies indifferently.

\begin{lemma}\label{pbw}
Given $x\in H_n$, where $H$ is a graded connected cocommutative Hopf algebra, the PBW theorem induces a unique decomposition
$x=e_n^1(x)+y$, where $e_n^1(x)$ is a primitive element in $H_n$ and $y\in\bigoplus\limits_{k>1}H_n\cap S^k(L)$. In particular, $y$ can be expanded as a sum of products of elements in $\bigoplus\limits_{k<n}H_k$.
\end{lemma}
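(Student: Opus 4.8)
The plan is to read the statement off from the Poincar\'e--Birkhoff--Witt decomposition recalled above, applied to the graded reduced Lie algebra $L:=\prim(H)$; it is reduced because $H$ is connected, so $\prim(H)\cap H_0=0$, and by the structure theorem for graded connected cocommutative Hopf algebras in characteristic zero $H$ is the enveloping algebra of $L$ (see \cite{CP21}). Thus the PBW coalgebra isomorphism $\iota\colon S(L)\to H$ and the induced decomposition $H_n=\bigoplus_{k\le n}H_n\cap S^k(L)$ are available, and everything will follow by unwinding definitions. We may assume $n\ge 1$, which is the relevant range for $e_n^1$.

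First I would note that, $H$ being connected, the degree $0$ summand $H_n\cap S^0(L)=H_n\cap\K$ vanishes for $n\ge 1$, so $H_n=\bigoplus_{k=1}^n H_n\cap S^k(L)$; write accordingly $x=x_1+\dots+x_n$ with $x_k\in H_n\cap S^k(L)$. Since $\iota$ restricts to the identity of $L=S^1(L)$, we have $H_n\cap S^1(L)=\prim(H)\cap H_n$, and by construction $e_n^1$ is precisely the projector of $H_n$ onto $\prim(H)\cap H_n=H_n\cap S^1(L)$ along $\bigoplus_{k\ge 2}H_n\cap S^k(L)$ --- this being the meaning of ``in agreement with the PBW decomposition'', see \cite{patras1992,CP21}. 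Hence $e_n^1(x)=x_1$ is primitive in $H_n$, and $y:=x-e_n^1(x)=x_2+\dots+x_n$ lies in $\bigoplus_{k>1}H_n\cap S^k(L)$. Uniqueness is then formal: if $x=p+y'$ with $p$ primitive and $y'\in\bigoplus_{k>1}H_n\cap S^k(L)$, then $p=x-y'\in H_n$, so $p\in\prim(H)\cap H_n=H_n\cap S^1(L)$, and comparing components in the direct sum $\bigoplus_k H_n\cap S^k(L)$ forces $p=x_1=e_n^1(x)$ and $y'=y$.

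For the final assertion I would use the description of $\iota$ as the symmetrization map: for $k\ge 1$ the subspace $\iota(S^k(L))$ is linearly spanned by the elements $\frac{1}{k!}\sum_{\sigma\in S_k}\ell_{\sigma(1)}\cdots\ell_{\sigma(k)}$ with $\ell_1,\dots,\ell_k\in L$ homogeneous, the products being taken in $H$. A monomial $\ell_{\sigma(1)}\cdots\ell_{\sigma(k)}$ lying in $H_n$ has $|\ell_1|+\dots+|\ell_k|=n$ with each $|\ell_i|\ge 1$ (as $L$ is reduced), so for $k\ge 2$ every factor satisfies $|\ell_i|\le n-(k-1)\le n-1$, that is $\ell_i\in\bigoplus_{j<n}H_j$. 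Hence $\bigoplus_{k>1}H_n\cap S^k(L)$ is spanned by products of elements of $\bigoplus_{j<n}H_j$, and in particular $y$ is a sum of such products, as claimed. The only points requiring care are the identification of $e_n^1$ with the PBW projector, for which I rely on the construction cited in the text, and the degree bookkeeping in the last step; neither is a genuine obstacle, so the proof is essentially a direct unwinding of the PBW decomposition.
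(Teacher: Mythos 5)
Your proof is correct and follows the same route the paper intends: the lemma is stated there without a separate proof, as a direct unwinding of the recalled PBW coalgebra isomorphism $S(L)\cong H$ and the defining property of the canonical idempotent $e_n^1$ as the projector onto $H_n\cap\prim(H)$ along $\bigoplus_{k\geq 2}H_n\cap S^k(L)$, exactly as you argue (including the degree bookkeeping for the last assertion).
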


More generally, one can show \cite[Th. 5.2.1]{CP21} that any primitive element $\psi_n$ in the $n$-th graded component of the descent algebra ($\psi_n\in Prim_n(\desc):=\prim(\desc)\cap \desc_n$) defines a projector from $H_n$ onto $\prim(H)\cap H_n$ provided the coefficient of $1_n\in S_n$ in the expansion of $\psi_n$, viewed as an element of the group algebra of $S_n$, is 1 in the basis of permutations. 
We call a family $(\psi_n)_{n\in \N^\ast}$ of such projectors a Lie idempotent family. The Dynkin idempotents $Dyn_n/n$ and the Klyachko idempotents provide, together with the Eulerian idempotents, classical examples of such families. There are infinitely many as any convex combination of Lie idempotent families is a Lie idempotent family --- this follows from their characterization in terms of the coefficients of $1_n\in S_n$ in their expansion.

It is easy to show that Lemma \ref{pbw} generalizes to these families $(\psi_n)_{n\in \N^\ast}$ in the following way: given $x\in H_n,\ x=\psi_n(x)+y$, where $\psi_n(x)$ is a primitive element in $H_n$ and $y$ can be expanded as a sum of products of elements in $\bigoplus\limits_{k<n}H_k$.

\begin{theo}[Structure theorem for free-Lie type Hopf algebras]\label{stf}
Let $H$ be a free-Lie type Hopf algebra with freely generating subspace $W$. Then, for any Lie idempotent family $(\psi_n)_{n\in \N^\ast}$
$H$ is a standard free-Lie type Hopf algebra over the freely generating subspace $W':=\bigoplus\limits_{n\in\N^\ast}\psi_n(W_n)$. In particular, a free-Lie type Hopf algebra is always naturally the enveloping algebra of a free Lie algebra over a generating subspace $W'$, and any Lie idempotent family gives rise to such a space.
\end{theo}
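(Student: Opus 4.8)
The plan is to establish the two defining properties of a standard free-Lie type structure on $H$ relative to $W'$, namely that $W'\subseteq\prim(H)$ and that $W'$ freely generates $H$ as an associative algebra; the closing assertion about enveloping algebras of free Lie algebras is then exactly the remark recalled at the end of Section~\ref{gba}. That $W'\subseteq\prim(H)$ is immediate: by the generalized version of Lemma~\ref{pbw}, each $\psi_n$ maps $H_n$ into $\prim(H)\cap H_n$, so $W'_n=\psi_n(W_n)\subseteq\prim(H)$. The same lemma gives, for every $x\in H_n$, a decomposition $x=\psi_n(x)+y$ with $y$ a sum of products of elements of strictly smaller degree, so that $\psi_n(x)\equiv x$ modulo the decomposables $\bar H\cdot\bar H$; in other words each $\psi_n$ induces the identity on the degree-$n$ part $Q(H)_n$ of the space $Q(H):=\bar H/(\bar H\cdot\bar H)$ of indecomposables. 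The last ingredient of this first step is that, $H$ being \emph{free} as an associative algebra on $W$, the canonical graded map $W\to Q(H)$ is an isomorphism and $W\cap(\bar H\cdot\bar H)=0$ (for a free associative algebra the generating subspace is a complement of the decomposables).

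From this I would extract two facts. First, if $x\in W_n$ satisfies $\psi_n(x)=0$ then $x=y\in\bar H\cdot\bar H$, hence $x=0$; thus $\psi_n$ is injective on $W_n$ and, by local finiteness, $\dim W'_n=\dim W_n<\infty$ for all $n$. Second, the image of $W'_n$ in $Q(H)_n$ equals the image of $W_n$, which is all of $Q(H)_n$; so $W'$ surjects onto $Q(H)$, and by the standard degree induction for graded connected algebras (any $x\in H_n$ equals an element of $W'_n$ plus a sum of products of elements of degree $<n$, which lie in the subalgebra generated by $W'$ by induction) $W'$ generates $H$ as an algebra. Hence the inclusion $W'\hookrightarrow H$ extends to a surjective morphism of graded algebras $g\colon T^g(W')\to H$ from the free associative algebra on $W'$.

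To see that $g$ is injective, hence an isomorphism, I would compare Hilbert series. With $w(t):=\sum_n(\dim W_n)t^n$ and $w'(t):=\sum_n(\dim W'_n)t^n$, freeness of $H$ on $W$ gives $\sum_n(\dim H_n)t^n=(1-w(t))^{-1}$, and the Hilbert series of $T^g(W')$ is $(1-w'(t))^{-1}$; but $w=w'$ by the dimension count just made, so $T^g(W')$ and $H$ have equal, finite dimension in every degree. A degreewise dimension-preserving surjection of locally finite graded vector spaces is bijective, so $g$ is an isomorphism of algebras and $W'$ freely generates $H$. Combined with $W'\subseteq\prim(H)$, this says precisely that $H$ is a standard free-Lie type Hopf algebra over $W'$, whence — by the classical theory of free Lie algebras invoked in Section~\ref{gba} — $H$ is canonically the enveloping algebra of the free Lie algebra generated by $W'$.

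The point that needs care, and which I regard as the crux, is the twofold use of the generalized Lemma~\ref{pbw}: read together with the identification of $W$ with $Q(H)$, it yields simultaneously the surjectivity of $g$ (because $\psi_n\equiv\id$ on indecomposables) and the injectivity of $\psi_n$ on $W_n$, which is what makes the Hilbert series match and thus forces $g$ to be injective; everything after that is routine graded algebra. If one wished to avoid invoking local finiteness at the final stage, one could instead assemble the isomorphisms $\psi_n\colon W_n\xrightarrow{\sim}W'_n$ into a graded linear isomorphism $W\xrightarrow{\sim}W'$, extend it by functoriality of the free construction to an algebra isomorphism $T^g(W)\xrightarrow{\sim}T^g(W')$, and observe that precomposing $g$ with it gives an algebra endomorphism of $H\cong T^g(W)$ equal to the identity plus a term strictly raising the tensor weight, hence unipotent in each (finite) degree and therefore invertible — so $g$ is an isomorphism in any case.
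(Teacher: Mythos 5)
Your proof is correct, but it reaches the freeness of $H$ over $W'$ by a different mechanism than the paper. The paper's proof is a direct change-of-generators argument: it picks a graded basis $(b_i)$ of $W$ ordered by non-decreasing degree, writes $b_i=\psi_{|b_i|}(b_i)+r$ with the residuum $r$ (by the generalized Lemma \ref{pbw}) lying in the subalgebra generated by the earlier, lower-degree generators, and concludes by ``the standard triangularity argument'' that the elements $\psi_{|b_i|}(b_i)$ again freely generate $H$. You instead split the statement into three parts: primitivity of $W'$ (same use of the Lie idempotent family), generation (via surjectivity of $W'$ onto the indecomposables $\bar H/\bar H\cdot\bar H$, using that for a free algebra the generating subspace complements the decomposables, plus the usual degree induction), and freeness (via injectivity of $\psi_n$ on $W_n$, so that $\dim W'_n=\dim W_n$, and a Hilbert-series comparison forcing the surjection $T^g(W')\to H$ to be an isomorphism). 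The dimension-count route is clean and makes explicit two facts the paper leaves implicit --- that $\psi_n$ is injective on $W_n$ and that $W'$ hits all indecomposables --- at the cost of invoking local finiteness, which is harmless here since it is built into the definition of a free-Lie type Hopf algebra. Your alternative ending (the composite endomorphism of $H\cong T^g(W)$ is the identity plus a map strictly raising tensor weight, hence unipotent in each degree and invertible) is essentially a precise rendering of the paper's terse triangularity step, and it avoids the dimension count altogether; either version completes the argument, and the final identification with the enveloping algebra of a free Lie algebra is, as you say, the standard fact recalled at the end of Section \ref{gba}.
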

\begin{proof}
Indeed, $W$ freely generates $H$ as a free associative algebra. Let us choose a graded basis $(b_1,\ldots,b_n,\ldots )$, ordered in such a way that $|b_{i+1}|\geq |b_i|$.
By Lemma \ref{pbw}, $b_i=\psi_n(b_i)+r$, where the residuum $r$ belongs to the free associative algebra generated by the $b_j$, $j<i-1$. By the standard triangularity argument, $\psi_n(b)$ is freely independent from the $b_j$, $j<i$ and from the $\psi_n(b_j),\ j\leq i$, and the $\psi_n(b_i)$ freely generate $H$. The Lemma follows.
\end{proof}
The theorem can be recast in categorical terms. Let $V$ a graded vector space together with an isomorphism $\phi:V\to W$. We write $\phi_n$ for the degree $n$ component of the isomorphism. The data $(H,W)$ of a free-Lie type Hopf algebra are equivalent to the data $(H,V,\phi)$ of a graded cocommutative Hopf algebra structure on $H$ together with the linear injection (still written) $\phi$ from $V$ into $H$ that induces an algebra isomorphism $T(V)\cong H$. We call $(H,V,\phi)$ a presentation of the  free-Lie type Hopf algebra $(H,W)$.

\begin{theo}[Structure theorem \ref{stf}, categorical formulation]
Let $(H,V,\phi)$ be a presentation of a free-Lie type Hopf algebra $H$. Then,  for any Lie idempotent family $(\psi_n)_{n\in \N^\ast}$, $(H,V,\bigoplus\limits_{n\in\N^\ast}\psi_n\circ \phi_n)$ is a presentation of a standard free-Lie type Hopf algebra structure on $H$. 
\end{theo}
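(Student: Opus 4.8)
The plan is to deduce this categorical statement directly from Theorem~\ref{stf} and the triangularity argument in its proof; the only extra point to verify is that the map $\phi':=\bigoplus_{n\in\N^\ast}\psi_n\circ\phi_n$ is still a \emph{linear injection} of $V$ into $H$, so that it induces an algebra isomorphism $T(V)\cong H$ and therefore genuinely determines a presentation.

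First I would set $W:=\phi(V)$, the freely generating graded subspace attached to the presentation $(H,V,\phi)$, and recall that since $\phi$ is a graded isomorphism onto $W$ one has $W_n=\phi_n(V_n)$ for every $n$, and in particular $\phi_n(V_n)\subseteq H_n$. Applying Theorem~\ref{stf} to $(H,W)$, the graded subspace $W':=\bigoplus_{n\in\N^\ast}\psi_n(W_n)$ lies in $\prim(H)$ and freely generates $H$ as an associative algebra, so that $H$ carries a standard free-Lie type Hopf algebra structure over $W'$. Next I would extract from the proof of Theorem~\ref{stf} the fact that each restriction $\psi_n\colon W_n\to\prim(H)\cap H_n$ is injective: choosing a graded basis $(b_i)$ of $W$ ordered by nondecreasing degree and using Lemma~\ref{pbw}, the triangularity argument shows that the family $(\psi_n(b_i))_i$ is freely independent in $H$, hence linearly independent, so $\psi_n$ carries a basis of $W_n$ to a linearly independent family and is injective on $W_n$. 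Consequently $\phi'=\bigl(\bigoplus_n \psi_n|_{W_n}\bigr)\circ\phi$, being a composite of graded linear injections, is a graded linear injection $V\hookrightarrow\prim(H)$ with image exactly $W'$.

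Finally, since $\phi'$ is a linear isomorphism from $V$ onto the freely generating subspace $W'$, it extends uniquely to an algebra isomorphism $T(V)\cong H$; thus $(H,V,\phi')$ is a presentation of $H$, and because $\phi'(V)=W'\subseteq\prim(H)$ it is a presentation of the standard free-Lie type Hopf algebra structure on $H$ furnished by $(\psi_n)_{n\in\N^\ast}$. I do not expect a real obstacle here: the substance is carried entirely by Theorem~\ref{stf}, and the one step needing a word of justification is the injectivity of $\psi_n$ on $W_n$, which, as noted, is already implicit in the triangularity argument of that proof. If one wishes, one may add that by the PBW-based duality recalled in Section~\ref{gba} the graded dual of $H$ equipped with this structure is the shuffle Hopf algebra over $\widehat{W'}^{\,\ast}$.
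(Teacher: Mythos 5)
Your proposal is correct and follows essentially the same route as the paper, which offers no separate argument for the categorical formulation but treats it as a direct restatement of Theorem \ref{stf} in the language of presentations. Your one addition --- making explicit that the triangularity argument yields injectivity of each $\psi_n$ on $W_n$, so that $\bigoplus_n\psi_n\circ\phi_n$ is indeed an injection onto the freely generating subspace $W'$ --- is exactly the point the paper leaves implicit, and it is handled correctly.
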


The first formulation of the Theorem is a change-of-basis argument generalizing the one we encountered it in the Example of the Hopf algebra of descents. The second formulation is more abstract and says that a free-Lie type Hopf algebra is always canonically isomorphic to a standard free-Lie type Hopf algebra. This is the argument we encountered in the Example of graded quasi-shuffle Hopf algebras with Hoffman's isomorphism. 

By duality (hereafter in this section all duals are graded duals and all graded vector spaces are locally finite), we immediately get:
\begin{theo}[Structure theorem for cofree graded commutative Hopf algebras]
Let $H^\ast$ be a cofree graded commutative Hopf algebra with structure map $\phi^\ast:H^\ast\to \prim(H^\ast)$. Then,  for any Lie idempotent family $(\psi_n)_{n\in \N^\ast}$, the structure map $\pi:=\sum\limits_{n\in\N^\ast}\phi_n^\ast\circ \psi_n^\ast$ equips $H^\ast$ with another, isomorphic, cofree coalgebra structure over $\prim(H^\ast)$. Furthermore, the structure map $\pi$ induces a Hopf algebra isomorphism with a shuffle Hopf algebra $H^\ast\cong (T^g(\prim(H^\ast)),\shuffle,\Delta)$.
\end{theo}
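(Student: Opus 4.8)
The plan is to deduce the statement from the already-proven structure theorem for free-Lie type Hopf algebras (Theorem \ref{stf}, in its categorical formulation) by passing to graded duals. Apart from routine bookkeeping with transposes, the only external input is the classical duality between enveloping algebras of free Lie algebras and shuffle Hopf algebras; in particular, I expect no serious obstacle, the content being essentially ``immediate by duality'' as announced in the text.

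Write $V:=\prim(H^\ast)$ and set $A:=(H^\ast)^\ast$, the graded dual. By local finiteness, $A$ is a connected graded cocommutative Hopf algebra and $(A)^\ast=H^\ast$ canonically. Since $\phi^\ast$ exhibits $H^\ast$ as a cofree graded coalgebra over $V$, the lifting formula of Lemma \ref{cofreeuniv} yields a coalgebra isomorphism $H^\ast\cong T^g(V)$ carrying the coproduct of $H^\ast$ to the deconcatenation coproduct $\Delta$; dualizing, and using that the graded dual of deconcatenation is concatenation, $A$ becomes the free associative algebra $T(V^\ast)$ on the graded subspace $W:=V^\ast$, via the injection $\phi:=(\phi^\ast)^\ast\colon V^\ast\hookrightarrow A$. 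Hence $(A,W,\phi)$ is a presentation of a free-Lie type Hopf algebra in the sense of Theorem \ref{stf}; note that $W$ need not be contained in $\prim(A)$, which is exactly the possible non-standardness of $\phi^\ast$.

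Now I would apply the categorical form of Theorem \ref{stf}: for any Lie idempotent family $(\psi_n)_{n\in\N^\ast}$, the triple $(A,W,\bigoplus_{n\in\N^\ast}\psi_n\circ\phi_n)$ is a presentation of a \emph{standard} free-Lie type Hopf algebra structure on $A$, so that $A$ is canonically the enveloping algebra $U(L')$ of the free Lie algebra $L'$ on $W':=\bigoplus_{n}\psi_n(\phi_n(W_n))\subseteq\prim(A)$, with coproduct the unique algebra morphism making $W'$ primitive. Dualizing this presentation: by the classical duality between enveloping algebras of free Lie algebras and shuffle Hopf algebras (\cite{Reutenauer}), the graded dual of $U(L')$ with primitive generating space $W'$ is the shuffle Hopf algebra $(T^g((W')^\ast),\shuffle,\Delta)$, whose canonical cofree structure map is the graded transpose of the inclusion $W'\hookrightarrow A$. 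Since $(A)^\ast=H^\ast$, since the transpose identifies $(W')^\ast$ with $W^\ast=V=\prim(H^\ast)$, and since the graded transpose of $\bigoplus_{n}\psi_n\circ\phi_n$ is exactly $\bigoplus_{n}\phi_n^\ast\circ\psi_n^\ast=\pi$, we obtain simultaneously that $\pi$ is a (graded, identity-on-$\prim(H^\ast)$) cofree structure map on $H^\ast$ and that it induces a Hopf algebra isomorphism $H^\ast\cong(T^g(\prim(H^\ast)),\shuffle,\Delta)$. Finally, $\pi$ being a second cofree graded coalgebra structure on $H^\ast$ over $\prim(H^\ast)$, it is isomorphic to the given structure $\phi^\ast$ by Lemma \ref{uniso}.

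The only points that genuinely need care are the bookkeeping ones: invoking local finiteness to identify $(H^\ast)^{\ast\ast}$ with $H^\ast$ and to promote the deconcatenation/concatenation and $\Delta_\shuffle/\shuffle$ dualities to honest isomorphisms of Hopf algebras, and checking that the graded transpose of the presentation map $\bigoplus_{n}\psi_n\circ\phi_n$ is precisely the map $\pi$ written in the statement --- this last verification is where the composition order $\phi_n^\ast\circ\psi_n^\ast$ (rather than $\psi_n^\ast\circ\phi_n^\ast$) comes from, transposition reversing the order of composition.
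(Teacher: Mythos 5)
Your argument is correct and is essentially the paper's own proof: the theorem is obtained there ``immediately by duality'' from Theorem \ref{stf}, with the shuffle statement coming from the classical duality between enveloping algebras of free Lie algebras and shuffle Hopf algebras, exactly as you do. Your write-up merely makes explicit the bookkeeping (local finiteness, biduality, and the order reversal giving $\phi_n^\ast\circ\psi_n^\ast$) that the paper leaves implicit.
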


The last sentence follows from Theorem \ref{stf} and the fact that the graded dual Hopf algebra of $(T^g(\prim(H^\ast)),\shuffle,\Delta)$ is the enveloping algebra of the free Lie algebra over the dual of $\prim(H^\ast)$.

The theorem can be expressed in the language of $B_\infty$-algebras. 

\begin{theo}[Structure theorem for graded commutative $B_\infty$-algebras]
Let $V^\ast$ be a graded commutative $B_\infty$-algebra and $(T^g(V^\ast),\ast,\Delta)$ the associated graded commutative Hopf algebra. Let $(\psi_n)_{n\in \N^\ast}$ be a Lie idempotent family.
Then, the structure map $\sum\limits_{n\in\N^\ast}\pi_V\circ \psi_n^\ast$ from $T^g(V^\ast)$ to $V^\ast$ induces an Hopf algebra isomorphism  $(T^g(V^\ast),\ast,\Delta)\cong  (T^g(V^\ast),\shuffle,\Delta)$.
\end{theo}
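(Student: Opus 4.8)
The statement is the $B_\infty$-algebraic reformulation of the preceding \emph{Structure theorem for cofree graded commutative Hopf algebras}, which was itself obtained by dualizing Theorem~\ref{stf}; so the plan is to make that chain of dualizations explicit and to check that, under graded duality, all the maps match the ones named in the statement. Throughout one uses local finiteness of the graded components (as declared for this part of the section), so that the graded dual of a graded dual is canonically the original object and the graded pairing is perfect.

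First I would set $H:=(T^g(V^\ast),\ast,\Delta)$, a locally finite cofree graded commutative Hopf algebra with canonical structure map $\pi_V:H\to\prim(H)=V^\ast$. Passing to the graded dual, $H^\ast$ is a graded connected cocommutative Hopf algebra, and since $H$ is cofreely cogenerated by $V^\ast$, the dual $H^\ast$ is free as an associative algebra on $W:=(V^\ast)^\ast$, the generating injection $\phi:W\hookrightarrow H^\ast$ being the transpose of $\pi_V$; this is the free/cofree duality recorded after Lemma~\ref{cofreeuniv} and restated in Section~\ref{gba}. Note that $\phi(W)$ need not consist of primitive elements: it does so precisely when the $B_\infty$-structure is trivial, i.e. when $\ast=\shuffle$. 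Thus $(H^\ast,\phi(W))$ is a free-Lie type Hopf algebra with presentation $(H^\ast,W,\phi)$.

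Next I would apply Theorem~\ref{stf} (categorical form) to $(H^\ast,W,\phi)$, taking as Lie idempotent family the canonical idempotents $(\psi_n)_{n\in\N^\ast}$ of $H^\ast$ (the projectors $H^\ast_n\to\prim(H^\ast)\cap H^\ast_n$ of Lemma~\ref{pbw}). The conclusion is that $W':=\bigoplus_{n\geq 1}\psi_n(\phi(W_n))$ is a primitive, freely generating subspace, and the triangularity argument in the proof of Theorem~\ref{stf} shows that $\bigoplus_n\psi_n\circ\phi_n:W\to W'$ is a graded isomorphism. Hence $(H^\ast,W')$ is a standard free-Lie type Hopf algebra, so $H^\ast$ is canonically the enveloping algebra of the free Lie algebra $L'$ over $W'$; since $L'\cong L$, the free Lie algebra over $W$, there is a Hopf algebra isomorphism $H^\ast\cong U(L)$. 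Dualizing, $U(L)^\ast$ is the shuffle Hopf algebra over $W^\ast\cong V^\ast$ (recalled in Section~\ref{gba}; see \cite{Reutenauer}), while $H^{\ast\ast}=H$, so transposing $H^\ast\cong U(L)$ yields a Hopf algebra isomorphism $(T^g(V^\ast),\ast,\Delta)\cong(T^g(V^\ast),\shuffle,\Delta)$. Finally, chasing this transpose through the identifications --- the generating injection of the new presentation is $\bigoplus_n\psi_n\circ\phi_n$, whose transpose $H\to W^\ast=V^\ast$ is $\sum_n\phi_n^\ast\circ\psi_n^\ast=\sum_n\pi_V\circ\psi_n^\ast$ --- shows that this isomorphism is exactly the one induced, in the sense of Lemma~\ref{uniso}, by the structure map $\sum_n\pi_V\circ\psi_n^\ast$; one has only to verify that this map is a graded surjection onto $\prim(T^g(V^\ast))=V^\ast$ restricting to the identity there, which follows from each $\psi_n$ being a projector onto primitives together with $\phi^\ast=\pi_V$.

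I expect the only genuinely delicate part to be the duality bookkeeping underlying the previous paragraph: identifying the canonical projection $\pi_V$ of the cofree coalgebra $T^g(V^\ast)$ with the transpose of the canonical generating injection $\phi$ of its free dual; checking that the canonical idempotents $\psi_n$ on $H^\ast$ transpose to the operators $\psi_n^\ast$ appearing in the statement, so that the \emph{same} Lie idempotent family is used on both sides; and using local finiteness to pass legitimately back and forth between $H$ and $H^{\ast\ast}$ and between $W$ and $W^{\ast\ast}$. No idea beyond Theorem~\ref{stf} and the classical duality between enveloping algebras of free Lie algebras and shuffle Hopf algebras is needed, but the translation has to be carried out consistently.
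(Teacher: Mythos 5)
Your proposal follows the paper's own route: the theorem is stated there precisely as the $B_\infty$-translation of the structure theorem for cofree graded commutative Hopf algebras, which is obtained by dualizing Theorem \ref{stf} and invoking the fact that the graded dual of the enveloping algebra of a free Lie algebra is a shuffle Hopf algebra, together with exactly the transpose bookkeeping ($\pi_V=\phi^\ast$, $\sum_n\phi_n^\ast\circ\psi_n^\ast=\sum_n\pi_V\circ\psi_n^\ast$) that you spell out. One small caution: the $(\psi_n)$ in the statement are an \emph{arbitrary} Lie idempotent family (Eulerian, Dynkin, Klyachko, convex combinations, \dots), not only the canonical projectors of Lemma \ref{pbw} as your parenthetical suggests, but your argument applies verbatim to any such family, so nothing is lost.
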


When the Lie idempotent family is the Eulerian family, the Theorem was obtained by Bellingeri, Ferrucci and Tapia in \cite[Rmk 3.5]{BFT}.

\section{The structure of commutative $B_\infty$-algebras}\label{SCB}

Let us start this Section by restating the characterization of shuffle Hopf algebras (Lemma \ref{trivial}).
\begin{lemma}\label{carshu}
Let $(H,*,\Delta,\pi)$ be a cofree commutative Hopf algebra. Then, $H$ is a shuffle algebra in the $\pi$ basis if and only if $\pi$ vanishes on $\bar H*\bar H$, the square of the augmentation ideal of $\bar H$.
\end{lemma}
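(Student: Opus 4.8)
The plan is to reduce the statement to Lemma~\ref{trivial} by transporting the Hopf algebra structure of $H$ to the standard cofree model. First I would set $V:=\prim(H)$ and use the universal property of the cofree conilpotent coalgebra $(T(V),\Delta,\pi_V)$ (Lemma~\ref{cofreeuniv}, together with the remark on uniqueness following Lemma~\ref{uniso}): the cofree structure $\pi$ on $H$ is the same datum as an isomorphism of conilpotent coalgebras $\Phi\colon H\to T(V)$ with $\pi_V\circ\Phi_{\mid\bar H}=\pi$. By construction $\Phi$ carries the $\pi$-basis of $H$ (the basis determined by the presentation $\pi$, once a basis of $\prim(H)$ is fixed) onto the word basis of $T(V)$, so that ``$H$ is a shuffle algebra in the $\pi$-basis'' means precisely that the transported product $\ast':=\Phi\circ\ast\circ(\Phi^{-1}\otimes\Phi^{-1})$ on $T(V)$ equals the shuffle product $\shuffle$.

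Next I would note that, $\Phi$ being a coalgebra isomorphism, $(T(V),\ast',\Delta)$ is a commutative bialgebra, so $\ast'$ is a coalgebra map $T(V)\otimes T(V)\to T(V)$; by Proposition~\ref{equivdef} (through Lemmas~\ref{lemmaBinf}, \ref{lemmaunit} and~\ref{lemmaassoc}) it corresponds to the commutative $B_\infty$-structure $\langle-,-\rangle:=\pi_V\circ\ast'$ on $V$, from which $\ast'$ is recovered via~(\ref{eq1}). Lemma~\ref{trivial} then applies verbatim: $\ast'=\shuffle$ if and only if $\langle-,-\rangle$ is trivial, i.e.\ $\langle w,w'\rangle=0$ for all $w,w'\in T_+(V)$.

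Finally I would translate this last condition back through $\Phi$. Since $\Phi$ fixes $1$ and is bijective it restricts to a linear isomorphism $\bar H\to T_+(V)$, and for $x,y\in\bar H$ one has $\pi(x\ast y)=\pi_V\bigl(\Phi(x)\ast'\Phi(y)\bigr)=\langle\Phi(x),\Phi(y)\rangle$; hence the triviality of $\langle-,-\rangle$ is equivalent to $\pi(x\ast y)=0$ for all $x,y\in\bar H$, which by linearity of $\pi$ is exactly the vanishing of $\pi$ on the linear span $\bar H\ast\bar H$. Chaining the two equivalences gives the Lemma. I do not expect a genuine obstacle here: the entire content is Lemma~\ref{trivial}, and the only points demanding a little care are the identification of ``shuffle algebra in the $\pi$-basis'' with $\ast'=\shuffle$ via the model isomorphism $\Phi$, and the harmless passage from ``$\pi\circ\ast$ vanishes on $\bar H\otimes\bar H$'' to ``$\pi$ vanishes on $\bar H\ast\bar H$''.
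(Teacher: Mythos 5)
Your argument is correct and matches the paper's intent exactly: the paper offers no separate proof, presenting Lemma~\ref{carshu} as a restatement of Lemma~\ref{trivial} transported through the cofree-structure isomorphism $H\cong T(\prim(H))$, which is precisely the translation you carry out. Your explicit verification that $\pi(x\ast y)=\langle\Phi(x),\Phi(y)\rangle$ for $x,y\in\bar H$ is the only detail the paper leaves implicit, and it is done correctly.
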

In particular, any surjection $\gamma: H\to \prim(H)$ that acts as the identity map on $\prim(H)$ and vanishes on $\bar H*\bar H$ defines a shuffle algebra structure on $H$ in the $\gamma$ basis.

Recall (details can be found in \cite[Sections 2.10 and 3.3]{CP21}) that to a commutative Hopf algebra $H$ are classically associated a group and a Lie algebra: the group is the set of algebra maps from $H$ to the ground field $\mathbb K$ equipped with the restriction of the convolution product on $End(H)$, the algebra of linear endomorphisms of $H$. The Lie algebra is the vector space of linear forms on $\bar H$ that vanish on $\bar H*\bar H$ or, equivalently, of linear forms on $H$ that vanish on
$\K+\bar H*\bar H$. These linear forms are usually called infinitesimal characters, their bracket is obtained as the bracket associated to the convolution product. 
This construction generalizes from linear forms to linear endomorphisms of $H$. A linear endomorphism of $H$ that vanishes on $\mathbb K+\bar H*\bar H$ is called an infinitesimal endomorphism of $H$. 

\begin{defi}
An infinitesimal endomorphism $\phi$ of $H$ is called tangent to identity if and only if its restriction to $\prim(H)$ is the identity map. 
\end{defi}

We state the following Corollary of Lemma \ref{carshu} as a Theorem in view of its meaningfulness for the theory.

\begin{theo}\label{varpibasis}
Let $(H,*,\Delta,\pi)$ be a cofree commutative Hopf algebra and $\phi$ a tangent to identity endomorphism of $H$.
By Lemma \ref{carshu}, $H$ is a shuffle Hopf algebra in the $\pi\circ \phi$ basis.
\end{theo}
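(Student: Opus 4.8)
The plan is to reduce the statement to Lemma \ref{carshu} by checking that the surjection $\gamma := \pi \circ \phi$ satisfies the two hypotheses of that Lemma (more precisely, of the sentence following it): namely, that $\gamma$ restricts to the identity on $\prim(H)$, and that $\gamma$ vanishes on $\bar H * \bar H$. Granting these two facts, the conclusion is immediate: $\gamma$ defines a shuffle algebra structure on $H$ in the $\gamma$-basis, which is exactly the assertion.

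First I would verify that $\gamma = \pi \circ \phi$ does act as the identity on $\prim(H)$. Since $\phi$ is tangent to identity, $\phi$ restricts to the identity on $\prim(H)$, so for $p \in \prim(H)$ we have $\gamma(p) = \pi(\phi(p)) = \pi(p)$. Now I must recall that $\pi$, being the structure map of the cofree structure on $H$, restricts to the identity on $\prim(H) \cong \prim(T(V))$; this is part of Lemma \ref{uniso} (the structure map restricts to the identity on the primitives). Hence $\gamma(p) = p$ for all $p \in \prim(H)$, as required.

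The substantive step is showing $\gamma$ vanishes on $\bar H * \bar H$. The key input is that $\phi$ is an infinitesimal endomorphism, so by definition it vanishes on $\mathbb K + \bar H * \bar H$; in particular $\phi(\bar H * \bar H) = 0$, hence $\gamma(\bar H * \bar H) = \pi(\phi(\bar H * \bar H)) = \pi(0) = 0$. So in fact both hypotheses are essentially built into the definitions of ``tangent to identity'' and ``infinitesimal endomorphism'', and the proof is a short composition argument. The only point requiring care — and the main (mild) obstacle — is making sure that $\gamma$ is indeed a legitimate structure map, i.e. that it is surjective onto $\prim(H)$: this follows because it already restricts to the identity on $\prim(H)$, so its image contains $\prim(H)$, and its image is contained in $\prim(H)$ because $\pi$ takes values there. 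Thus $\gamma$ is a surjection $H \to \prim(H)$ acting as the identity on $\prim(H)$ and vanishing on $\bar H * \bar H$, and the cited consequence of Lemma \ref{carshu} applies verbatim to give that $H$ is a shuffle Hopf algebra in the $\gamma = \pi\circ\phi$ basis.
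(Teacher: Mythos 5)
Your proof is correct and follows exactly the route the paper intends: the theorem is stated as a corollary of Lemma \ref{carshu}, and your verification that $\gamma=\pi\circ\phi$ is a surjection onto $\prim(H)$, restricts to the identity on $\prim(H)$ (since both $\phi$ and the structure map $\pi$ do), and kills $\bar H*\bar H$ (since $\phi$ is infinitesimal) is precisely the unwritten check the paper leaves to the reader. No gaps; you simply make explicit what the paper compresses into ``By Lemma \ref{carshu}''.
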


When restated in the language of $B_\infty$-algebras, the Theorem reads:
\begin{theo}\label{isoH2}
Let $\langle-,-\rangle$ be a commutative $B_\infty$ structure on $V$ with associated Hopf algebra $(T(V),*,\Delta)$. Let $\phi$ a tangent to identity endomorphism of $T(V)$ and 
let $\tilde\omega$ be the coalgebra automorphism of $(T(V),\Delta)$ induced by $\omega:=\pi_V\circ\phi$: 
\[\tilde\omega:\left\{\begin{array}{rcl}
T(V)&\longrightarrow&T(V)\\
1&\longmapsto&1,\\
w\in T_+(V)&\longmapsto&\displaystyle \sum_{k=1}^\infty \sum_{\substack{w=w_1\ldots w_k,\\ w_1,\ldots,w_k\neq \emptyset}}
\omega(w_1)\ldots \omega(w_k). 
\end{array}\right.\]
Then $\tilde \omega$ is a Hopf algebra isomorphism from $(T(V),*,\Delta)$ to $(T(V),\shuffle,\Delta)$. 
\end{theo}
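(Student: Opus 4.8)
The plan is to obtain the statement as the translation, through Proposition~\ref{equivdef}, of Theorem~\ref{varpibasis} (itself a consequence of Lemma~\ref{carshu}), the only real work being to recognise the explicit map $\tilde\omega$ of the statement as the change-of-presentation isomorphism.

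First I would check that $\omega:=\pi_V\circ\phi$ has the required features. Since $\phi$ is tangent to identity it is by definition an infinitesimal endomorphism of $H:=(T(V),*,\Delta)$, hence vanishes on $\K+\bar H*\bar H$ and restricts to the identity on $\prim(H)=V$. As $\pi_V$ kills every word of length $\geq 2$ and is the identity on $V$, it follows at once that $\omega:\bar{T(V)}\to V$ is surjective, restricts to the identity on $V$, and vanishes on $\bar H*\bar H$. By Lemma~\ref{cofreeuniv}, $\omega$ lifts uniquely to a coalgebra endomorphism of $(T(V),\Delta)$, namely $\tilde\omega=\varepsilon+\sum_{n\geq 1}\omega^{\otimes n}\circ\bar\Delta_n$, and spelling out $\bar\Delta_n$ as iterated reduced deconcatenation gives exactly the closed formula in the statement (the sum is finite on each word since $T(V)$ is conilpotent). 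Because $\omega$ is the identity on $V$, on a word $v_1\ldots v_n$ the term with $k=n$ blocks reproduces $v_1\ldots v_n$, while every term with $k<n$ blocks lies in $V^{\otimes k}$ with $k<n$; thus $\tilde\omega=\mathrm{Id}+N$ with $N$ strictly lowering the word-length filtration, so $\tilde\omega$ is invertible, a coalgebra automorphism, with inverse given recursively as in \eqref{inversestru} (with $\omega$ in the role of $\pi$ there). In particular $(T(V),\Delta,\omega)$ is again a cofree conilpotent coalgebra over $V$, and $\tilde\omega$ is precisely the isomorphism of cofree coalgebras comparing it with the standard presentation $(T(V),\Delta,\pi_V)$, i.e. the unique coalgebra endomorphism with $\pi_V\circ\tilde\omega=\omega$ on $\bar{T(V)}$ (cf. Lemma~\ref{uniso}).

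Next I would apply Lemma~\ref{carshu} to the cofree commutative Hopf algebra $(T(V),*,\Delta,\omega)$: since $\omega$ vanishes on $\bar H*\bar H$, the product $*$ is the shuffle product in the $\omega$-basis. By the description of the $\pi$-basis following Definition~\ref{cofreedef}, this says exactly that the isomorphism $H\cong T^g(\prim H)$ determined by the structure map $\omega$ carries $*$ to $\shuffle$; and that isomorphism is, by the universal property of cofree coalgebras (Lemma~\ref{cofreeuniv}), the lift of $\omega$, i.e. $\tilde\omega$ itself. Hence $\tilde\omega(x*y)=\tilde\omega(x)\shuffle\tilde\omega(y)$ for all $x,y\in T(V)$, and $\tilde\omega(1)=1$. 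Combined with the second paragraph, $\tilde\omega$ is a bijective morphism of bialgebras from $(T(V),*,\Delta)$ to $(T(V),\shuffle,\Delta)$, hence (both bialgebras being conilpotent, so Hopf) a Hopf algebra isomorphism, which is the assertion.

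The only step requiring genuine care is the middle of the last paragraph: matching the abstract ``passage to the $\omega$-basis'' isomorphism produced by Lemma~\ref{carshu}/Theorem~\ref{varpibasis} with the explicit $\tilde\omega$ of the statement. This amounts to comparing the recursive change-of-presentation formulas \eqref{unifla}--\eqref{inversestru} of Lemma~\ref{uniso} with the closed form $\varepsilon+\sum_n\omega^{\otimes n}\circ\bar\Delta_n$ of Lemma~\ref{cofreeuniv}, which is purely formal bookkeeping; everything else is a direct dictionary translation via Proposition~\ref{equivdef} and the definition of a $B_\infty$-enveloping algebra.
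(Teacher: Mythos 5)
Your proposal is correct and follows essentially the same route as the paper, which presents Theorem \ref{isoH2} as the $B_\infty$-language restatement of Theorem \ref{varpibasis}, itself a corollary of Lemma \ref{carshu}: you verify that $\omega=\pi_V\circ\phi$ is a surjection onto $\prim(T(V))$ restricting to the identity there and vanishing on $\bar H*\bar H$, and identify the universal lift $\varepsilon+\sum_n\omega^{\otimes n}\circ\bar\Delta_n$ with the explicit $\tilde\omega$ of the statement. The only difference is that you spell out the change-of-presentation bookkeeping (via Lemmas \ref{cofreeuniv} and \ref{uniso}) that the paper leaves implicit.
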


The calculation of the inverse isomorphism follows from the computation of the inverse of a cofree coalgebra isomorphism in Eq. \ref{inversestru}.

\begin{prop}\label{isoL} Let notation be as in Theorem \ref{isoH2}.
Let us define inductively $\zeta$ (by induction on the length of tensors) by
\[\zeta:\left\{\begin{array}{rcl}
T_+(V)&\longrightarrow&V\\
v\in V&\longmapsto&v,\\
w\in T_n(V),\ n\geq 2&\longmapsto& \zeta(w)= -\displaystyle \sum_{k=2}^{n} \sum_{\substack{w=w_1\ldots w_k,\\ w_1,\ldots,w_k\neq \emptyset}}
\varpi_*(\zeta(w_1)\ldots \zeta(w_k)). 
\end{array}\right.\]

Let us then define 
\[\tilde\zeta:\left\{\begin{array}{rcl}
T(V)&\longrightarrow&T(V)\\
1&\longmapsto&1,\\
w\in T_+(V)&\longmapsto&\displaystyle \sum_{k=1}^\infty \sum_{\substack{w=w_1\ldots w_k,\\ w_1,\ldots,w_k\neq \emptyset}}
\zeta(w_1)\ldots \zeta(w_k). 
\end{array}\right.\]
Then $\tilde\zeta$ is the Hopf algebra isomorphism from to $(T(V),\shuffle,\Delta)$ to $(T(V),*,\Delta)$ inverse to $\tilde\omega$ as defined in Theorem \ref{isoH2}. 
\end{prop}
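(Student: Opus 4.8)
The plan is to establish the single identity $\tilde\omega\circ\tilde\zeta=\mathrm{Id}_{T(V)}$ and then read off the statement. First I would record the two structural facts that make the computation go through. By the very shape of the formula defining it, $\tilde\zeta=\varepsilon+\sum_{k\geq 1}\zeta^{\otimes k}\circ\bar\Delta_k$ is exactly the cofree lift of the linear map $\zeta\colon T_+(V)\to V$ in the sense of Lemma \ref{cofreeuniv}; in particular $\tilde\zeta$ is an endomorphism of the coalgebra $(T(V),\Delta)$ and $\pi_V\circ\tilde\zeta=\zeta$ on $T_+(V)$. Likewise $\tilde\omega$ is the cofree lift of $\omega=\pi_V\circ\phi$, so $\pi_V\circ\tilde\omega=\omega$ on $T_+(V)$. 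One also checks that the recursion defining $\zeta$ makes sense: for $\ell(w)=n$ it expresses $\zeta(w)$ in terms of the values $\zeta(w_i)$ on words $w_i$ of length $\leq n-1$ (since $k\geq 2$) and of $\varpi_*=\pi_V\circ\phi=\omega$ on the resulting length-$k$ word, so $\zeta$ is well defined by induction on word length. Finally, since $\phi$ is tangent to identity and $\prim(T(V))=V$, while $\pi_V$ restricts to the identity on $V$, we have $\omega|_V=\mathrm{Id}_V$ (equivalently $\varpi_*|_V=\mathrm{Id}_V$); this last point is the small fact that will close the recursion.

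Next comes the reduction. As $\tilde\omega$ is already a Hopf algebra isomorphism by Theorem \ref{isoH2}, it is a bijection, so exhibiting one-sided inverse suffices: if $\tilde\omega\circ\tilde\zeta=\mathrm{Id}$ then $\tilde\zeta=\tilde\omega^{-1}$. Now $\tilde\omega\circ\tilde\zeta$ and $\mathrm{Id}$ are both coalgebra endomorphisms of $(T(V),\Delta)$ fixing $1$, so by the uniqueness part of the universal property of the cofree conilpotent coalgebra $(T(V),\Delta,\pi_V)$ (Lemma \ref{cofreeuniv} and the remarks following Lemma \ref{uniso}) it is enough to check that $\pi_V\circ(\tilde\omega\circ\tilde\zeta)=\pi_V$ on $T_+(V)$, i.e., using $\pi_V\circ\tilde\omega=\omega$, that $\omega\circ\tilde\zeta=\pi_V$ on $T_+(V)$. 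On a letter $v\in V$ this is immediate: $\tilde\zeta(v)=\zeta(v)=v$ and $\omega(v)=v=\pi_V(v)$.

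The heart of the argument is the case $\ell(w)=n\geq 2$, where $\pi_V(w)=0$ and we must show $\omega(\tilde\zeta(w))=0$. Expanding $\tilde\zeta(w)=\sum_{k=1}^{n}\sum_{w=w_1\cdots w_k,\ w_i\neq\emptyset}\zeta(w_1)\cdots\zeta(w_k)$ and applying the linear map $\omega$, the single $k=1$ term contributes $\omega(\zeta(w))=\zeta(w)$ (again because $\zeta(w)\in V$ and $\omega|_V=\mathrm{Id}$), whereas the contributions with $k\geq 2$ add up to $\sum_{k=2}^{n}\sum_{w=w_1\cdots w_k}\varpi_*(\zeta(w_1)\cdots\zeta(w_k))$, which by the very recursion defining $\zeta$ equals $-\zeta(w)$. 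Hence $\omega(\tilde\zeta(w))=\zeta(w)-\zeta(w)=0$, as required. This proves $\tilde\omega\circ\tilde\zeta=\mathrm{Id}_{T(V)}$, so $\tilde\zeta=\tilde\omega^{-1}$; and since the inverse of a Hopf algebra isomorphism is again one, $\tilde\zeta$ is the announced Hopf algebra isomorphism from $(T(V),\shuffle,\Delta)$ to $(T(V),*,\Delta)$.

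The only delicate point, and the place where care is needed, is the bookkeeping that matches the expansion of $\tilde\zeta(w)$ term by term against the combinatorial recursion for $\zeta$ — in particular isolating the $k=1$ contribution and invoking that $\varpi_*$ (equivalently $\omega$) restricts to the identity on $V$ because $\phi$ is tangent to identity; everything else is formal, relying on the universal property of cofree coalgebras and on the fact that $\tilde\omega$ is already known to be invertible. An alternative route is to bypass the direct verification and instead apply the general inversion formula (\ref{inversestru}) for cofree coalgebra isomorphisms with structure map $\pi=\omega$, checking that it reproduces the recursion defining $\zeta$; but the self-contained computation above seems the cleanest.
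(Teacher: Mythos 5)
Your proof is correct. Note that the paper gives no detailed argument for this proposition: it merely points to the inversion formula (\ref{inversestru}) of Lemma \ref{uniso}, which encodes the left-inverse condition $\tilde\zeta\circ\tilde\omega=\id$ and therefore has the composition the other way around ($\mu$ applied to $\pi(w_1)\ldots\pi(w_k)$, rather than the projection applied to a word in the values of the unknown map). Your direct check of the right-inverse condition $\tilde\omega\circ\tilde\zeta=\id$ --- reducing, by the uniqueness part of the universal property of the cofree conilpotent coalgebra $(T(V),\Delta,\pi_V)$, to the identity $\omega\circ\tilde\zeta=\pi_V$ on $T_+(V)$, and then observing that the $k=1$ term contributes $\zeta(w)$ while the $k\geq 2$ terms contribute $-\zeta(w)$ by the defining recursion --- uses the same toolbox (Lemma \ref{cofreeuniv} and the fact that coalgebra morphisms into $T(V)$ are determined by $\pi_V\circ(-)$ on the coaugmentation ideal), is equally legitimate because $\tilde\omega$ is already known to be bijective from Theorem \ref{isoH2}, and in fact matches the recursion exactly as printed, since that recursion is nothing but $\omega\circ\tilde\zeta=\pi_V$ with the length-one piece isolated. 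You also correctly handle the one notational wrinkle: the statement writes $\varpi_*$ in the recursion although at that point only $\omega=\pi_V\circ\phi$ has been introduced; for the proposition to hold for a general tangent-to-identity $\phi$ the recursion must be read with $\omega$ in place of $\varpi_*$ (equivalently, one specializes to $\phi=e_*$, $\omega=\varpi_*$, as in Proposition \ref{isoshuf}), which is precisely the identification you make and without which the claimed inversion would fail.
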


In the graded case we saw that any Lie idempotent family defines a shuffle Hopf algebra structure on a cofree graded commutative Hopf algebra.
In the non graded case, the key idea to construct a universal tangent to identity endomorphism will be to use the extension of the definition of the Eulerian idempotents from the original case of the tensor Hopf algebra to the case where the Hopf algebra $H$ is commutative and unipotent (being unipotent is a weaker hypothesis than being graded connected; it always holds when the Hopf algebra is conilpotent as a coalgebra).  
We have already used in previous works on quasi-shuffle algebras this fact that the constructions and proofs of structure results on graded connected cocommutative or commutative Hopf algebras in \cite{patras1992,patras1993decomposition,patras1994algebre} can be extended to a broader setting as they actually only require the Hopf algebras to be unipotent  --- this observation was developed systematically in \cite[Chap. 4]{CP21}, to which we refer for details and proofs. See in particular \cite[Thm 4.4.1]{CP21}.

When applied to commutative $B_\infty$-algebras, these results imply the Theorem:
\begin{theo}\label{canid}
Let $*\in P^c(V)$. The canonical idempotent $e_*$ is defined by $e_*(1)=0$ and for any non-empty word $w$ of length $n$,
\[e_*(w) 
=\sum_{k=1}^n \sum_{\substack{w=w_1\ldots w_k,\\ w_1,\ldots,w_k\neq \emptyset}}
\dfrac{(-1)^{k-1}}{k} w_1*\ldots*w_k.\]
Then $e_*$ is a projector, vanishing on $T_+(V)*T_+(V)$, and for any $v\in V$, $e_*(v)=v$: it is a tangent to identity infinitesimal endomorphism of $T(V)$. Moreover, the image of $e_*$ freely generates $(T(V),*)$ as a commutative algebra.

Furthermore, as the product of a $B_\infty$-algebra is a morphism of filtered graded coalgebras,
$$e_*(T_n(V))\in \bigoplus\limits_{k=1}^nT_k(V).$$
\end{theo}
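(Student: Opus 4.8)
The plan is to obtain the displayed containment directly from the fact, recorded in the Remark following Lemma~\ref{lemmaBinf}, that the product $*$ of a $B_\infty$-enveloping algebra is a morphism of filtered-graded coalgebras; concretely $T_n(V)*T_m(V)\subseteq\bigoplus_{k\leq n+m}T_k(V)$, that is, $*$ respects the canonical filtration $FT(V)_n:=\bigoplus_{k\leq n}T_k(V)$ attached to the tensor degree. Every summand appearing in the formula for $e_*$ is an iterated $*$-product of the blocks of a decomposition $w=w_1\ldots w_k$, so it suffices to control the filtered degree of such products.

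First I would record the auxiliary claim: if $w_1,\dots,w_k\in T_+(V)$ are non-empty words with $\ell(w_1)+\dots+\ell(w_k)=n$, then $w_1*\cdots*w_k\in FT(V)_n$. Since $*$ is associative (Lemma~\ref{lemmaassoc}) the $k$-fold product is unambiguous, and one proves the claim by induction on $k$. The case $k=1$ is immediate, as $w_1\in T_n(V)\subseteq FT(V)_n$. For the inductive step set $u:=w_1*\cdots*w_{k-1}$, which lies in $FT(V)_{n-\ell(w_k)}$ by the induction hypothesis; decomposing $u=\sum_{j\leq n-\ell(w_k)}u_j$ into homogeneous components $u_j\in T_j(V)$ and using the filtered property of $*$ gives $u_j*w_k\in T_j(V)*T_{\ell(w_k)}(V)\subseteq FT(V)_{j+\ell(w_k)}\subseteq FT(V)_n$, whence $u*w_k=\sum_j u_j*w_k\in FT(V)_n$.

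Granting this, for $w\in T_n(V)$ the element
\[e_*(w)=\sum_{k=1}^n\ \sum_{\substack{w=w_1\ldots w_k,\\ w_1,\ldots,w_k\neq\emptyset}}\frac{(-1)^{k-1}}{k}\,w_1*\cdots*w_k\]
is a $\K$-linear combination of elements of $FT(V)_n$, hence lies in $FT(V)_n=\bigoplus_{k=0}^n T_k(V)$. To discard the degree-zero part one uses that the counit $\varepsilon$ is an algebra morphism for $*$ (as $(T(V),*,\Delta)$ is a bialgebra): since each $w_i$ is non-empty, $\varepsilon(w_1*\cdots*w_k)=\varepsilon(w_1)\cdots\varepsilon(w_k)=0$, so $\varepsilon(e_*(w))=0$ and therefore $e_*(w)\in\bigoplus_{k=1}^n T_k(V)$, which is the assertion.

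There is no serious obstacle here: the statement is really a consequence of the design of the filtered-graded framework, whose very purpose is to make such boundedness automatic. The only point deserving a moment's attention is the passage from the binary filtered estimate $T_n(V)*T_m(V)\subseteq FT(V)_{n+m}$ to its iterate over $k$ blocks, which is exactly where associativity of $*$ enters, together with the elementary observation that a filtration level $FT(V)_n$ for the tensor-degree grading is stable under extracting homogeneous components.
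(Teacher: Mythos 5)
Your treatment of the displayed containment is correct, and it is essentially what the paper intends by the inline justification: the binary filtered estimate $T_n(V)*T_m(V)\subseteq\bigoplus_{k\leq n+m}T_k(V)$ (the Remark after Lemma~\ref{lemmaBinf}, resting on Lemma~\ref{filtcoalg}), iterated over the $k$ blocks using associativity, together with the counit argument to kill the degree-zero component (equivalently, $T_+(V)*T_+(V)\subseteq T_+(V)$ because $\varepsilon$ is an algebra map for $*$).

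However, this addresses only the last sentence of the theorem. The main assertions --- that $e_*$ is a projector, that it vanishes on $T_+(V)*T_+(V)$ (hence is an infinitesimal endomorphism), and that its image freely generates $(T(V),*)$ as a commutative algebra --- are not touched by your proposal; only $e_*(v)=v$ for $v\in V$ is immediate from the defining formula, since a length-one word admits no splitting into two or more non-empty blocks. These claims do not follow from the filtered-graded framework. The sum defining $e_*$ is exactly $\sum_{k\geq 1}\frac{(-1)^{k-1}}{k}\,m_*^{(k)}\circ\bar\Delta_k$, i.e.\ the convolution logarithm of the identity of the conilpotent commutative bialgebra $(T(V),*,\Delta)$, and the paper derives idempotency, orthogonality to the square of the augmentation ideal, and the freeness of the image from the theory of canonical (generalized Eulerian) idempotents for unipotent commutative Hopf algebras, citing \cite[Thm 4.4.1]{CP21} and the earlier work of Patras. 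Those facts genuinely use commutativity of $*$ and conilpotence (they fail for a general bialgebra), so they require either that citation or a substantive argument of their own; your closing remark that the whole statement is automatic from the design of the filtered-graded framework is accurate only for the part you actually proved.
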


When the same results are applied more generally to a cofree commutative Hopf algebra $H$, the same conclusions hold {\it mutatis mutandis}. The canonical (or generalized Eulerian) idempotent is then defined on $\bar H$ by
\[e_*(h) 
=\sum_{k=1}^\infty
\dfrac{(-1)^{k-1}}{k} m_k\circ\bar\Delta_k(h),\]
where we write $m_k$ for the iterated product (from $\bar H^{\otimes k}$ to $H$). 
It
projects onto a (canonically constructed) vector subspace $Q(H)$ of $H$ that freely generates $H$ as a commutative algebra. The projection is orthogonal to the square of $\bar H$, the augmentation ideal of $H$ and acts as the identity on $\prim(H)$. It is a tangent to identity infinitesimal endomorphism of $H$.

\begin{prop}\label{varpidem}
Let $*\in P^c(V)$.
We set $\varpi_*:=\pi_V\circ e_*$ and call $\varpi$ the canonical commutative $B_\infty$ idempotent. It acts as the identity map on $V$ and sends any non-empty word $w$ to
\[\varpi_*(w)=\sum_{k=1}^\infty \sum_{\substack{w=w_1\ldots w_k,\\ w_1,\ldots,w_k\neq \emptyset}}
\dfrac{(-1)^{k-1}}{k} \langle w_1,w_2*\ldots*w_k\rangle.\]
\end{prop}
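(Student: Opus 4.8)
Looking at Proposition \ref{varpidem}, the claim follows from combining two facts established earlier: the formula for the canonical idempotent $e_*$ from Theorem \ref{canid}, and the formula for the product $*$ in terms of the $B_\infty$ structure $\langle-,-\rangle$ from Lemma \ref{lemmaBinf}.

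\textbf{Plan of proof.} The plan is to start from the definition $\varpi_*:=\pi_V\circ e_*$ and substitute the explicit formula for $e_*$ from Theorem \ref{canid}. This gives, for a non-empty word $w$ of length $n$,
\[\varpi_*(w)=\pi_V\left(\sum_{k=1}^n \sum_{\substack{w=w_1\ldots w_k,\\ w_1,\ldots,w_k\neq \emptyset}}\dfrac{(-1)^{k-1}}{k}\, w_1*\ldots*w_k\right)=\sum_{k=1}^n \sum_{\substack{w=w_1\ldots w_k,\\ w_1,\ldots,w_k\neq \emptyset}}\dfrac{(-1)^{k-1}}{k}\,\pi_V(w_1*\ldots*w_k).\]
So everything reduces to computing $\pi_V(w_1*\ldots*w_k)$ for non-empty words $w_1,\dots,w_k$. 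Since $*$ is associative (Lemma \ref{lemmaassoc}), I would write $w_1*\ldots*w_k = w_1*(w_2*\ldots*w_k)$ and invoke the defining relation $\pi_V(u*v)=\langle u,v\rangle$ from Lemma \ref{lemmaBinf}, applied with $u=w_1$ and $v=w_2*\ldots*w_k$. This immediately yields $\pi_V(w_1*\ldots*w_k)=\langle w_1, w_2*\ldots*w_k\rangle$, and plugging this back into the displayed sum gives exactly the claimed formula (the upper bound $\infty$ in the statement is harmless: for a word of length $n$ all decompositions into $k>n$ non-empty factors are empty, so the sum is finite and agrees with the sum up to $k=n$).

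\textbf{Main point to be careful about.} The only genuine step is the identity $\pi_V(w_1*\dots*w_k)=\langle w_1,w_2*\dots*w_k\rangle$. The delicate point is that $w_2*\dots*w_k$ is in general \emph{not} a single word but a linear combination of words of various lengths, so I must use the bilinearity of $\langle-,-\rangle$ and of $*$ to make sense of $\langle w_1,w_2*\dots*w_k\rangle$. The relation $\pi_V(u*v)=\langle u,v\rangle$ is by definition valid for words $u,v$, and extends by bilinearity to the case where $v$ is an arbitrary element of $T(V)$; that is all that is needed. Associativity of $*$ (Lemma \ref{lemmaassoc}) ensures the bracketing $w_1*(w_2*\dots*w_k)$ is unambiguous, and the case $k=1$ gives $\varpi_*(v)=\pi_V(v)=v$ for $v\in V$, which is consistent with $e_*$ acting as the identity on $V$ (Theorem \ref{canid}) — here by convention the empty product $w_2*\dots*w_k$ for $k=1$ is the unit $1$, and $\langle v,1\rangle=\pi_V(v)=v$ by Lemma \ref{lemmaunit}. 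Assembling these observations completes the proof; there is no real obstacle, the statement being essentially a bookkeeping consequence of the two earlier formulas.
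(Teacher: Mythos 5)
Your argument is correct and follows essentially the same route as the paper's own proof: both reduce the displayed formula to the identity $\pi_V(w_1*\cdots*w_k)=\pi_V(w_1*(w_2*\cdots*w_k))=\langle w_1,w_2*\cdots*w_k\rangle$, obtained from $\langle u,v\rangle=\pi_V(u*v)$ (extended bilinearly) together with associativity, after substituting the formula for $e_*$ from Theorem \ref{canid}. The only point the paper adds that you do not is a one-line justification of the name ``idempotent'' (namely that $\varpi_*$ is idempotent since $\pi_V$ is and $e_*\circ\pi_V=\pi_V$), which is not part of the displayed claim and does not affect the correctness of your proof of it.
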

\begin{proof}
The map $\varpi_*$ is indeed an idempotent since $\pi_V$ is an idempotent and $e_\ast\circ\pi_V=\pi_V$. The explicit formula follows from the formula for $e_*$ in Theorem \ref{canid} and the observation that, since $\langle w,w'\rangle =\pi_V(w*w')$,
\[\pi_V(w_1*w_2*\ldots*w_k)=\pi_V(w_1*(w_2*\ldots*w_k))=\langle w_1,w_2*\ldots*w_k\rangle. \qedhere\]
\end{proof}

\begin{example}
Let $v_1,v_2,v_3\in V$.
\begin{align*}
\varpi_*(v_1)&=v_1,\\
\varpi_*(v_1v_2)&=-\dfrac{1}{2}\langle v_1,v_2\rangle,\\
\varpi_*(v_1v_2v_3)&=-\dfrac{1}{2}\left(\langle v_1v_2,v_3\rangle+\langle v_1,v_2v_3\rangle\right)
+\dfrac{1}{3}\langle v_1,v_2v_3+v_3v_2+\langle v_2,v_3\rangle\rangle.
\end{align*}
\end{example}

\begin{prop} When $(H,\pi)$ is a cofree commutative Hopf algebra, define similarly the idempotent $\varpi_*:=\pi\circ e_*$ and call it the canonical $\pi$-idempotent. The Hopf algebra $H$ is then a shuffle Hopf algebra in the $\varpi$ basis.
\end{prop}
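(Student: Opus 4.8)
The plan is to deduce the statement directly from Lemma~\ref{carshu} (equivalently, from its restatement Theorem~\ref{varpibasis}), using the properties of the canonical idempotent $e_*$ recorded in Theorem~\ref{canid} and in the paragraph immediately following it. No substantially new computation is needed; the work is to check that $\varpi_*=\pi\circ e_*$ satisfies the hypotheses of Lemma~\ref{carshu}.

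First I would recall that, for a cofree commutative Hopf algebra $(H,\pi)$, the generalized Eulerian idempotent $e_*$ defined on $\bar H$ by $e_*(h)=\sum_{k\geq 1}\frac{(-1)^{k-1}}{k}m_k\circ\bar\Delta_k(h)$ is well defined (the sum is finite since $H$ is conilpotent, so $\bar\Delta_k(h)=0$ for $k$ large), is an idempotent linear endomorphism of $H$, vanishes on $\K+\bar H*\bar H$, and acts as the identity on $\prim(H)$; in the terminology of Section~\ref{SCB} it is a tangent-to-identity infinitesimal endomorphism of $H$. This is exactly the content of the discussion after Theorem~\ref{canid}, imported from \cite[Chap.~4]{CP21}.

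Next I would verify the hypotheses of Lemma~\ref{carshu} for $\gamma=\varpi_*=\pi\circ e_*$: it is a linear map $\bar H\to\prim(H)$; it restricts to the identity on $\prim(H)$, because $e_*$ does and $\pi$ restricts to the identity on $\prim(H)$ (Lemma~\ref{uniso}), whence in particular it is surjective onto $\prim(H)$; and it annihilates $\bar H*\bar H$, since $e_*$ already does. Lemma~\ref{carshu} then gives that $H$, with the cofree coalgebra structure whose structure map is $\varpi_*$, is a shuffle Hopf algebra in the $\varpi_*$ basis. Alternatively the same conclusion drops out in one line from Theorem~\ref{varpibasis} applied to the tangent-to-identity endomorphism $\phi=e_*$, since $\pi\circ e_*=\varpi_*$.

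I do not anticipate a genuine obstacle: the statement is a corollary once the properties of $e_*$ in the (not necessarily graded) cofree commutative setting are in hand, and those have been established in the preceding paragraph. The only points deserving a word of care are that $e_*$ is well defined (conilpotency) and that $\varpi_*$ really is a structure map, i.e.\ a surjection onto $\prim(H)$ acting as the identity there — both immediate from the properties of $e_*$ and $\pi$.
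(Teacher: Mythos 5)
Your proposal is correct and is essentially the paper's own argument: the paper proves the Proposition in one line by noting that $e_*$ is a tangent-to-identity endomorphism and invoking Theorem \ref{varpibasis} (itself a restatement of Lemma \ref{carshu}), which is exactly your concluding observation. The extra verifications you spell out (well-definedness via conilpotency, vanishing on $\bar H*\bar H$, identity on $\prim(H)$) are just the properties already recorded after Theorem \ref{canid}, so nothing new is needed.
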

\begin{proof}
The Proposition follows from the fact that $e_*$ is a tangent to identity endomorphism and from Theorem \ref{varpibasis}.
\end{proof}
In the language of $B_\infty$-algebras, the Proposition reads:
\begin{prop}\label{isoshuf} Let notation be as in Proposition \ref{isoH2} but set $\varpi:=\pi_V\circ e_*$.
Then, $\tilde \varpi$ is a Hopf algebra isomorphism from $(T(V),*,\Delta)$ to $(T(V),\shuffle,\Delta)$. 
\end{prop}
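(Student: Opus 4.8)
The plan is to obtain Proposition~\ref{isoshuf} as the $B_\infty$-algebra translation of Theorem~\ref{isoH2}, applied to the particular tangent-to-identity endomorphism furnished by the canonical idempotent $e_*$ of Theorem~\ref{canid}; that is, one takes $\phi:=e_*$ in the statement of Theorem~\ref{isoH2}.

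First I would check that $e_*$ satisfies the hypothesis of Theorem~\ref{isoH2}, namely that it is a tangent-to-identity endomorphism of $T(V)$. This is exactly the content of Theorem~\ref{canid}: $e_*$ kills $1$ and vanishes on $T_+(V)*T_+(V)$, hence is an infinitesimal endomorphism of $T(V)$, and $e_*(v)=v$ for every $v\in V=\prim(T(V))$, so it is tangent to identity. With this choice the map $\omega:=\pi_V\circ\phi$ appearing in Theorem~\ref{isoH2} becomes $\pi_V\circ e_*=\varpi_*=\varpi$ (whose closed form is recorded in Proposition~\ref{varpidem}), and therefore the coalgebra automorphism $\tilde\omega$ of $(T(V),\Delta)$ induced by $\omega$ is precisely the map $\tilde\varpi$ of the statement. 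That $\tilde\varpi$ is a genuine coalgebra automorphism, and not merely an endomorphism, follows from Lemma~\ref{uniso}: since $\varpi$ restricts to the identity on the cogenerating space $V$, the universal property produces a two-sided inverse, given explicitly by the formula of Equation~\ref{inversestru} --- concretely the map $\tilde\zeta$ of Proposition~\ref{isoL}.

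Then I would invoke Theorem~\ref{isoH2} directly: since $e_*$ is a tangent-to-identity endomorphism, $\tilde\varpi=\tilde\omega$ is a Hopf algebra isomorphism from $(T(V),*,\Delta)$ onto $(T(V),\shuffle,\Delta)$. Equivalently, one may argue exactly as in the proof of the preceding Proposition (the one on cofree commutative Hopf algebras): $e_*$ being tangent to identity, Theorem~\ref{varpibasis} shows that $(T(V),*,\Delta)$ is a shuffle Hopf algebra in the $\varpi$-basis, and unravelling that statement through Lemma~\ref{carshu} and Lemma~\ref{trivial} is precisely the assertion that the coalgebra isomorphism $\tilde\varpi$ intertwines $*$ with $\shuffle$, i.e.\ is a Hopf algebra isomorphism.

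There is no real obstacle here: the statement is a corollary assembled from Theorem~\ref{canid} (the properties of $e_*$) and Theorem~\ref{isoH2} (the isomorphism criterion). The only point deserving a little care is the bookkeeping --- checking that the map written $\omega$ in Theorem~\ref{isoH2} is literally $\varpi=\pi_V\circ e_*$ here, so that $\tilde\omega$ and $\tilde\varpi$ denote the same automorphism, and recalling that $e_*$ is regarded as an endomorphism of all of $T(V)$ via $e_*(1)=0$ together with the filtered-graded bound $e_*(T_n(V))\subseteq\bigoplus_{k=1}^n T_k(V)$ of Theorem~\ref{canid}.
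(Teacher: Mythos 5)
Your proposal is correct and follows essentially the paper's own route: the paper derives this statement as the $B_\infty$-language form of the preceding proposition, whose proof is exactly "$e_*$ is a tangent-to-identity endomorphism (Theorem~\ref{canid}) plus Theorem~\ref{varpibasis}/Theorem~\ref{isoH2}", i.e.\ your specialization $\phi:=e_*$, $\omega=\pi_V\circ e_*=\varpi$. The extra bookkeeping you include (identifying $\tilde\omega$ with $\tilde\varpi$ and noting invertibility via Equation~\ref{inversestru} and Proposition~\ref{isoL}) is consistent with the paper and harmless.
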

 
\begin{remark}\label{qshvarpi}
In the quasi-shuffle case, the formulas simplify.  
Given two non empty words $w$ and $w'$, $\langle w,w'\rangle$ vanishes excepted when $w$ and $w'$ are both of length 1. Therefore, using also that the quasi-shuffle product of two words of length $p$ and $q$ is a linear combination of words of length at least $\max(p,q)$, we get:
\[\varpi_*(v_1\ldots v_n)=
\dfrac{(-1)^{n-1}}{n} \langle v_1,v_2\squplus\ldots\squplus v_n\rangle.\]
\[=\dfrac{(-1)^{n-1}}{n}\langle v_1,v_2\cdot\ldots\cdot v_n\rangle\]
\[= \dfrac{(-1)^{n-1}}{n}v_1\cdot v_2\cdot\ldots\cdot v_n,\]
and we obtain that for any $v_1,\ldots,v_n \in V$, with $n\geq 1$,
\[\varpi_*(v_1\ldots v_n)=\dfrac{(-1)^{n-1}}{n} v_1\cdot \ldots \cdot v_n.\]
Applying Proposition \ref{isoshuf}, we recover the Hoffman ``logarithmic'' isomorphism from $(T(V),\squplus,\Delta)$ to $(T(V),\shuffle,\Delta)$ \cite{Hoffman}.

The formulas for the inverse map (Proposition \ref{isoL}) also
simplify and a closed formula for $\zeta$ and $\tilde\zeta :=\tilde\varpi^{-1}$ can be obtained. 
Indeed, using Remark \ref{qshvarpi}, we get in that case the inductive formula 
\[\zeta(w)= -\displaystyle \sum_{k=2}^{n} \sum_{\substack{w=w_1\ldots w_k,\\ w_1,\ldots,w_k\neq \emptyset}}
\frac{(-1)^{k-1}}{k}\zeta(w_1)\cdot \ldots \cdot \zeta(w_k),\]
which, using for example the identity of coefficients resulting from the formal power series expansion of the identity $\mathrm{log}\circ \mathrm{exp}(x)=x$, is solved by
\[\zeta(w)=\frac{1}{n!}v_1\cdot \ldots \cdot v_n,\]
where $w=v_1\dots v_n\in T_n(V)$.
Applying Proposition \ref{isoL}, we recover Hoffman's ``exponential'' isomorphism from $(T(V),\shuffle,\Delta)$ to $(T(V),\squplus,\Delta)$ \cite{Hoffman}.

A similar analysis of the Hoffman isomorphism was performed in \cite{BFT}, where however the commutative algebra $V$ underlying the construction of the quasi-shuffle Hopf algebra is the algebra of  symmetric tensors over a vector space so as to obtain a graded commutative $B_\infty$ structure, as the article is written in the context of graded $B_\infty$-algebras. Using the unipotent version of structure theorems for Hopf algebras allows us to remove this restriction. 

See also our \cite{foissy2,FP20} where more advanced insights on the Hoffman isomorphism and more generally on deformations of shuffle Hopf algebras can be found.  
\end{remark}

We conclude with a remark on the naturality of the constructions presented in the article. Morphisms of commutative $B_\infty$-algebras are defined in the obvious way: given $(V,\langle-,-\rangle_V)$ and $(W, \langle-,-\rangle_W)$ two commutative $B_\infty$-algebras, a linear map $f$ from $V$ to $W$ is a commutative $B_\infty$ morphism if and only if, for any $v_1,\ldots,v_{p+q}$ in $V$,
\[\langle F(v_1\cdots v_p),F(v_{p+1}\cdots v_{p+q})\rangle_W = f(\langle v_1\otimes\cdots\otimes v_p,v_{p+1}\otimes\cdots\otimes v_{p+q}\rangle_V),\]
where $F$ is defined by $F(v_1\cdots v_i):=f(v_1)\otimes\cdots\otimes f(v_i)$.

We put $*_V=\Theta^{-1}(\langle-,-\rangle_V)$
and $*_W=\Theta^{-1}(\langle-,-\rangle_W)$ and let the reader check that if $f$ is a commutative $B_\infty$ morphism from $V$ to $W$, $F$ is a bialgebra morphism from $(T(V),*_V,\Delta)$ to $(T(W),*_W,\Delta)$.

\begin{prop}
Let $(V,\langle-,-\rangle_V)$ and $(W, \langle-,-\rangle_W)$ be two commutative $B_\infty$-algebras.  Let $f:V\longrightarrow W$ be a morphism of $B_\infty$-algebras. The following diagram is commutative:
\[\xymatrix{(T(V),*_V,\Delta) \ar[r]^{F} \ar[d]_{H_{*_V}} &(T(W),*_W,\Delta)\ar[d]^{H_{*_W}} \\
(T(V),\shuffle,\Delta)\ar[r]_F&(T(W),\shuffle,\Delta)}\]
In other terms, the commutative $B_\infty$/shuffle isomorphism is functorial.
\end{prop}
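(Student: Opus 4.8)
The idea is to check that the two composites around the square are morphisms of coalgebras $(T(V),\Delta)\to(T(W),\Delta)$ having the same composition with the structure map $\pi_W$, and then to invoke cofreeness. Concretely: write $e_{*_V}$, $e_{*_W}$ for the canonical idempotents of Theorem~\ref{canid}, set $\varpi_{*_V}=\pi_V\circ e_{*_V}$ and $\varpi_{*_W}=\pi_W\circ e_{*_W}$ (Proposition~\ref{varpidem}), and recall that $H_{*_V}=\widetilde{\varpi_{*_V}}$ and $H_{*_W}=\widetilde{\varpi_{*_W}}$ are the induced cofree-coalgebra lifts of Proposition~\ref{isoshuf}. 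The plan is then to compare $H_{*_W}\circ F$ and $F\circ H_{*_V}$ by composing with $\pi_W$.

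First I would record three elementary facts. (i)~$F$ is a morphism of coalgebras for the deconcatenation coproduct; in fact it is a bialgebra morphism $(T(V),*_V,\Delta)\to(T(W),*_W,\Delta)$, as noted in the paragraph preceding the proposition. Since $H_{*_V}$ and $H_{*_W}$ are coalgebra endomorphisms of $(T(V),\Delta)$ and $(T(W),\Delta)$, both $H_{*_W}\circ F$ and $F\circ H_{*_V}$ are coalgebra morphisms $(T(V),\Delta)\to(T(W),\Delta)$. (ii)~On $T_+(V)$ one has $\pi_W\circ F=f\circ\pi_V$: since $F(v_1\ldots v_n)=f(v_1)\ldots f(v_n)$ and $\pi_V,\pi_W$ are the canonical projections onto the length-one component, both sides send $v_1$ to $f(v_1)$ and every longer word to $0$. (iii)~$F\circ e_{*_V}=e_{*_W}\circ F$: using the closed form $e_*=\sum_{k\geq 1}\tfrac{(-1)^{k-1}}{k}\,m_k\circ\bar\Delta_k$ of the canonical idempotent (Theorem~\ref{canid}), this follows term by term from $F\circ m_k^{*_V}=m_k^{*_W}\circ F^{\otimes k}$ (because $F$ is an algebra morphism for the $*$-products) together with $F^{\otimes k}\circ\bar\Delta_k=\bar\Delta_k\circ F$ (because $F$ is a coalgebra morphism for deconcatenation).

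By (i) and the universal property of the cofree conilpotent coalgebra $(T(W),\Delta,\pi_W)$ (Lemma~\ref{cofreeuniv} and its proof), the equality $H_{*_W}\circ F=F\circ H_{*_V}$ will follow once $\pi_W\circ(H_{*_W}\circ F)=\pi_W\circ(F\circ H_{*_V})$ is checked. For the left-hand side, $\pi_W\circ H_{*_W}=\varpi_{*_W}$ by the defining property of the lift $\widetilde{\varpi_{*_W}}$, whence $\pi_W\circ(H_{*_W}\circ F)=\varpi_{*_W}\circ F=\pi_W\circ e_{*_W}\circ F=\pi_W\circ F\circ e_{*_V}=f\circ\pi_V\circ e_{*_V}=f\circ\varpi_{*_V}$, using (iii) and then (ii). For the right-hand side, $\pi_W\circ(F\circ H_{*_V})=(f\circ\pi_V)\circ H_{*_V}=f\circ\varpi_{*_V}$ by (ii) together with $\pi_V\circ H_{*_V}=\varpi_{*_V}$. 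The two coincide, which gives the commuting square; ``functoriality of the commutative $B_\infty$/shuffle isomorphism'' is its categorical reformulation.

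The only step carrying any content is (iii), and even it is routine bookkeeping; alternatively one may bypass it by verifying $\varpi_{*_W}\circ F=f\circ\varpi_{*_V}$ directly from the explicit formula of Proposition~\ref{varpidem} and the defining identity of a $B_\infty$-morphism. I do not expect a genuine obstacle here: the statement simply records that every ingredient entering the construction of $H_*$ --- the map $F$, the iterated products and coproducts, the projection $\pi$, and the cofree lift --- is natural in the underlying $B_\infty$-algebra.
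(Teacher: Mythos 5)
Your proposal is correct and follows essentially the same route as the paper's (very terse) proof: the paper likewise derives the statement from the definitions of $H_*$, $\varpi$ and $e_*$ together with the fact that $F$ is a bialgebra morphism, hence commutes with the iterated products and reduced coproducts entering $e_*$. Your write-up merely makes explicit the intermediate identities $\pi_W\circ F=f\circ\pi_V$ and $F\circ e_{*_V}=e_{*_W}\circ F$ and the final appeal to cofreeness, all of which are exactly the ingredients the paper invokes.
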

\begin{proof}
 The Proposition directly follows from the definition of the maps $H_*$, $\varpi$ and $ e_*$, from the definition of a commutative $B_\infty$ morphism, and the fact that $F$ is a morphism of bialgebras (so that its action commutes in particular with taking products or computing coproducts).
\end{proof}

\section{A $B_\infty$ structure on finite topologies}

\label{sectiontopo}

Let us detail now a particularly meaningful example: finite topologies (see also \cite{FMP}).

Let $E$ be a finite set. A topology on $E$ is a set $T$ of subsets of $E$ such that:
\begin{itemize}
\item $\emptyset$ , $E\in T$.
\item If $A,B\in T$, then $A\cup B\in T$ and $A\cap B\in T$.
\end{itemize}
By Alexandroff's theorem \cite{Alexandroff1937}, given a finite set $E$, the set of topologies on $E$ is in one-to-one correspondence with quasi-orders on $E$, that is to say transitive and reflexive relations on $E$:
given such a relation $\leq$ on $E$, the topology $T_\leq$ associated to $\leq$ is the set of subsets $O\subseteq E$ such that
\begin{align*}
&\forall x,y\in E,&x\in O\mbox{ and }x\leq y\Longrightarrow y\in O.
\end{align*} 
Conversely, if $T$ is a topology on a finite set $E$, it gives rise to a quasi-order on $E$ defined by
\begin{align*}
&\forall x,y\in E,&x\leq_T y&\Longleftrightarrow \mbox{ any $O\in T$ containing $x$ also contains $y$}.
\end{align*}

If $\leq$ is a quasi-order on $E$, we define an equivalence relation on $E$ by
\[x\sim_\leq y\Longleftrightarrow x\leq y\mbox{ and }y\leq x.\]
Thus, $E/\sim_\leq$ inherits an order $\overline{\leq}$, defined by
\[\overline{x}\overline{\leq}\overline{y}\Longleftrightarrow x \leq y.\]
We will further on abbreviate $\sim_{\leq_T}$ to $\sim_T$.

In the sequel, we shall represent isoclasses of finite topologies by the Hasse graphs of $(E/\sim_T,\overline{\leq_T})$, with indices representing the cardinalities of the classes of $\sim_{\leq_T}$,
when these cardinalities are not equal to $1$. Here are finite topologies of cardinality $\leq 4$:
\begin{align*}
1;&&\tun;&&\tdeux,\tun\tun,\tdun{$2$};&&
\ttroisun,\ptroisun,\ttroisdeux,\tdeux\tun,\tddeux{}{$2$},\tddeux{$2$}{},\tun\tdun{$2$},\tdun{$3$},
\end{align*}
\begin{align*}
&\tquatreun,\tquatredeux,\tquatrequatre,\tquatrecinq,\pquatreun,\pquatredeux,\pquatrequatre,\pquatrecinq,\pquatresept,\pquatrehuit,\ttroisun\tun,\ptroisun\tun,\ttroisdeux\tun,\tdeux\tdeux,\tdeux\tun\tun,\tun\tun\tun\tun,\\
&\tdtroisun{$2$}{}{},\tdtroisun{}{}{$2$},\pdtroisun{$2$}{},\pdtroisun{}{$2$}{},\tdtroisdeux{$2$}{}{},\tdtroisdeux{}{$2$}{},\tdtroisdeux{}{}{$2$},\tddeux{$2$}{}\tun,\tddeux{}{$2$}\tun,\tdeux\tdun{$2$},\tdun{$2$}\tun\tun,\tddeux{}{$3$},\tddeux{$3$}{},\tdun{$3$}\tun,\tddeux{$2$}{$2$},\tdun{$2$}\tdun{$2$},\tdun{$4$}.
\end{align*}
By convention, the edges in these graphs are oriented upwards.\\

The Hopf algebra $H_T$ of quasi-orders, or of finite topologies   \cite{FoissyMalvenuto,FMP} has for basis the set of (isoclasses) of finite topologies. Its product, that we will write $m$ further on, is given by the disjoint union: 
if $T$ and $T'$ are two topologies on respective sets $E$ and $E'$, then $TT'$ is a topology on the set $E\sqcup E'$, with 
\[TT'=\{O\sqcup O'\mid O\in T,\: O'\in T\}.\]
The Hasse graph of $TT'$ is the disjoint union of the Hasse graphs of $T$ and $T'$. This product is commutative, and its unit is the unique topology denoted $1$ on $\emptyset$. 
The coproduct $\Delta$ is defined on any finite topology $T$ on a set $E$ by
\[\Delta(T)=\sum_{O\in T} T_{\mid E\setminus O}\otimes T_{\mid O}.\]

\begin{example}\begin{align*}
\Delta(\tun)&=\tun\otimes 1+1\otimes \tun,\\
\Delta(\tdeux)&=\tdeux\otimes 1+1\otimes \tdeux+\tun\otimes \tun,\\
\Delta(\ttroisun)&=\ttroisun\otimes 1+1\otimes \ttroisun+\tdeux\otimes \tun+\tdeux\otimes \tun+\tun\otimes \tun\tun,\\
\Delta(\ptroisun)&=\ptroisun\otimes1+1\otimes \ptroisun+\tun\otimes \tdeux+\tun\otimes \tdeux+\tun\tun\otimes \tun,\\
\Delta(\ttroisdeux)&=\ttroisdeux\otimes 1+1\otimes \ttroisdeux+\tun\otimes \tdeux+\tdeux\otimes \tun.
\end{align*}\end{example}

The Hopf algebra $H_T$ is equipped with an extra structure: if $T$ and $T'$ are two topologies on respective finite sets $E$ and $E'$, then $T\downarrow T'$
is a topology on $E\sqcup E'$ defined by
\[T\downarrow T'=T'\cup \{O\sqcup E'\mid O\in T\}.\]
The Hasse graph of $T\downarrow T'$ is obtained by adding an edge from any maximal vertex of the Hasse graph of $T$ to any minimal vertex of the Hasse graph of $T'$. For example,
\begin{align*}
\tun \downarrow \tun&=\tdeux,&\tun \downarrow \tdeux&=\ttroisdeux,&\tun \downarrow \tun\tun&=\ttroisun,\\
&&\tdeux \downarrow\tun&=\ttroisdeux,&\tun\tun \downarrow\tun&=\ptroisun.
\end{align*}

The product $\downarrow$ is linearly extended to $H_T$, making it an associative algebra, which unit is again $1$. Moreover, by definition of $T\downarrow T'$, for any $x,y\in H_T$,
\[\Delta(x\downarrow y)=(x\otimes 1)\downarrow \Delta(y)+\Delta(x)\downarrow (1\otimes y)-x\otimes y.\]
Recall that an infinitesimal bialgebra in the sense of \cite{Loday2010} is
an associative unital algebra with product $\downarrow$ and a coassociative counital coalgebra with coproduct $\Delta$ and coaugmentation the unit of $\downarrow$ such that furthermore the previous identity is satisfied.
The triple $(H_T,\downarrow,\Delta)$ is thus an infinitesimal bialgebra. 

Let us survey some of the properties of these bialgebras with a view towards applications to $B_\infty$ structures. Complementary insights can be found in \cite{FMP}.
The main example of such objects are the tensor gebras $T(V)$, with the concatenation product that we write from now on $m_{conc}$ and the deconcatenation coproduct $\Delta$. 
In fact, in the connected case, these are the unique examples:

\begin{prop}\label{propinf}
Let $H=(H,\downarrow,\Delta)$ be an infinitesimal bialgebra. The following map is an injective map of infinitesimal bialgebra:
\[\Theta:\left\{\begin{array}{rcl}
(T(\prim(H)),m_{conc},\Delta)&\longrightarrow&(H,\downarrow,\Delta)\\
v_1\ldots v_k&\longmapsto&v_1\downarrow \ldots \downarrow v_k.
\end{array}
\right.\]
It is an isomorphism if and only if, the coalgebra $(H,\Delta)$ is conilpotent.
\end{prop}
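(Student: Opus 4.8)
The plan is to establish, in turn, that $\Theta$ is a morphism of unital algebras, then a morphism of coalgebras (hence of infinitesimal bialgebras), then that it is injective, and finally to identify surjectivity with conilpotence of $(H,\Delta)$. For the algebra part, associativity of $\downarrow$ gives $\Theta(v_1\cdots v_k)\downarrow\Theta(w_1\cdots w_\ell)=v_1\downarrow\cdots\downarrow v_k\downarrow w_1\downarrow\cdots\downarrow w_\ell=\Theta\big((v_1\cdots v_k)(w_1\cdots w_\ell)\big)$ and $\Theta(1)=1$; since each $v_i$ is primitive it lies in $\ker\varepsilon_H$, which is a $\downarrow$-ideal (the counit of an infinitesimal bialgebra is an algebra map), so $\Theta\big(\overline{T(\prim(H))}\big)\subseteq\ker\varepsilon_H$ and $\varepsilon_H\circ\Theta=\varepsilon_{T(\prim(H))}$. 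For the coproduct I would prove $\Delta\circ\Theta=(\Theta\otimes\Theta)\circ\Delta$ by induction on the length $n$ of $w=v_1\cdots v_n$, the cases $n\le1$ being immediate. Writing $\Theta(w)=v_1\downarrow\Theta(v_2\cdots v_n)$ and applying the infinitesimal compatibility $\Delta(a\downarrow b)=(a\otimes1)\downarrow\Delta(b)+\Delta(a)\downarrow(1\otimes b)-a\otimes b$ with $a=v_1$ (so $\Delta(a)=v_1\otimes1+1\otimes v_1$) and $b=\Theta(v_2\cdots v_n)$, and inserting the induction hypothesis for $\Delta\big(\Theta(v_2\cdots v_n)\big)$, the three boundary terms telescope and what remains is exactly $\sum_{j=0}^{n}\Theta(v_1\cdots v_j)\otimes\Theta(v_{j+1}\cdots v_n)=(\Theta\otimes\Theta)\Delta(w)$, the empty word being sent to $1$. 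Thus $\Theta$ is a morphism of infinitesimal bialgebras, and its image $H':=\Theta\big(T(\prim(H))\big)$ is a sub-infinitesimal-bialgebra of $H$.

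\emph{Injectivity} uses only conilpotence of $T(\prim(H))$, not of $H$. If $\ker\Theta\neq0$, choose $0\neq c\in\ker\Theta$ of least coradical degree $m$ (the least $m$ with $\overline\Delta_{m+1}(c)=0$) among nonzero kernel elements; since $\varepsilon_H\circ\Theta=\varepsilon_{T(\prim(H))}$ we have $c\in\overline{T(\prim(H))}$, so $m\ge1$. If $m=1$, then $c\in\prim\big(T(\prim(H))\big)=\prim(H)$, where $\Theta$ is the identity, so $c=\Theta(c)=0$, a contradiction. If $m\ge2$, then $\overline\Delta(c)\in(\ker\overline\Delta_m)^{\otimes2}$, on which $\Theta^{\otimes2}$ is injective by minimality of $m$; from $(\Theta\otimes\Theta)\overline\Delta(c)=\overline\Delta\big(\Theta(c)\big)=0$ we get $\overline\Delta(c)=0$, i.e.\ $c$ primitive, again a contradiction. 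Hence $\Theta$ is injective, and it restricts to a coalgebra isomorphism $T(\prim(H))\xrightarrow{\ \sim\ }H'$ that identifies the word-length filtration of $T(\prim(H))$ with the coradical filtration of $H'$, since $\overline\Delta_n^H\circ\Theta=\Theta^{\otimes n}\circ\overline\Delta_n^{T(\prim(H))}$ and $\Theta^{\otimes n}$ is injective.

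\emph{Surjectivity versus conilpotence.} If $\Theta$ is an isomorphism then $(H,\Delta)\cong\big(T(\prim(H)),\Delta\big)$ is conilpotent, the deconcatenation coproduct being such. Conversely, assume $(H,\Delta)$ conilpotent, so $\overline H=\bigcup_{n\ge1}F_n$ with $F_n:=\ker\overline\Delta_{n+1}$; I would prove $F_n\subseteq H'$ by induction on $n$, the cases $n\le1$ being clear ($F_1=\prim(H)\subseteq H'$). Let $n\ge2$ and $x\in F_n$. Standard properties of the coradical filtration give $\overline\Delta(x)\in F_{n-1}^{\otimes2}$; the subspace $C_{n-1}:=\K\oplus F_{n-1}$ is a subcoalgebra of $H$, and by induction $C_{n-1}\subseteq H'$, so $\Theta$ restricts to a coalgebra isomorphism $\theta\colon T_{\le n-1}(\prim(H))\xrightarrow{\ \sim\ }C_{n-1}$ onto the subcoalgebra of words of length $\le n-1$. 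Then $C:=C_{n-1}+\K x$ is again a (conilpotent) subcoalgebra of $H$, because $\overline\Delta(x)\in C_{n-1}^{\otimes2}$. By the universal property of the cofree conilpotent coalgebra $T(\prim(H))$, the linear map $\overline C\to\prim(H)$ equal to $\pi_{\prim(H)}\circ\theta^{-1}$ on $\overline{C_{n-1}}$ and to $0$ on $x$ lifts to a coalgebra map $G\colon C\to T(\prim(H))$; by uniqueness $G$ agrees with $\theta^{-1}$ on $C_{n-1}$, so $\Theta\circ G$ is the inclusion there. As $\Theta\circ G$ is a coalgebra map and $\overline\Delta(x)\in C_{n-1}^{\otimes2}$, we get $\overline\Delta\big(\Theta G(x)\big)=(\Theta G\otimes\Theta G)\overline\Delta(x)=\overline\Delta(x)$, hence $p:=\Theta G(x)-x\in\prim(H)\subseteq H'$; since $p$ is primitive, $\Theta(p)=p$, so $x=\Theta\big(G(x)\big)-\Theta(p)=\Theta\big(G(x)-p\big)\in H'$. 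This completes the induction, so $\Theta$ is onto, hence an isomorphism.

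\emph{Main obstacle.} Everything above is routine apart from the inductive step of surjectivity: the non-obvious point is that a preimage of $x$ is obtainable as $G(x)-p$ with $p:=\Theta G(x)-x$ automatically primitive, $G$ coming from cofreeness applied to the small subcoalgebra generated by $x$ and $F_{n-1}$. This is the coalgebraic shadow of the ``infinitesimal Eulerian idempotent'' $e:=\sum_{k\ge1}(-1)^{k-1}m_k\circ\overline\Delta_k$ on $\overline H$ ($m_k$ the $k$-fold product for $\downarrow$), well defined by conilpotence, which projects $\overline H$ onto $\prim(H)$ orthogonally to $\overline H\downarrow\overline H$; should one prefer that route, the single genuinely computational ingredient is to check $\overline\Delta\circ e=0$ by induction on $k$ from the reduced infinitesimal compatibility $\overline\Delta\circ\downarrow=(\downarrow\otimes\id)(\id\otimes\overline\Delta)+(\id\otimes\downarrow)(\overline\Delta\otimes\id)+\id_{\overline H^{\otimes2}}$.
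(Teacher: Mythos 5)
Your proof is correct, and most of it coincides with the paper's: the verification that $\Theta$ is an algebra map, the induction on word length (via the infinitesimal compatibility) showing $\Delta\circ\Theta=(\Theta\otimes\Theta)\circ\Delta$, the minimal-degree argument for injectivity (the paper phrases it with the word-length filtration of $T(\prim(H))$, you with its coradical filtration, which is the same thing), and the observation that surjectivity forces conilpotence are all the paper's steps. Where you genuinely diverge is the converse direction. The paper argues more directly: for $x\in\ker\overline\Delta_{n+1}$, coassociativity gives $\overline\Delta_n(x)\in\prim(H)^{\otimes n}$, so writing $\overline\Delta_n(x)=\sum_i v_{i,1}\otimes\cdots\otimes v_{i,n}$ and setting $w=\sum_i v_{i,1}\cdots v_{i,n}\in T(\prim(H))$ one gets $\overline\Delta_n(x-\Theta(w))=0$, and induction on the coradical degree concludes. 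You instead invoke the universal property of the cofree conilpotent coalgebra on the auxiliary subcoalgebra $C=C_{n-1}+\K x$, produce a coalgebra lift $G$, and correct $G(x)$ by the primitive $p=\Theta G(x)-x$. Both arguments are sound; the paper's is shorter and needs nothing beyond coassociativity, while yours makes the role of cofreeness explicit and is indeed, as you say, the coalgebraic shadow of the projector $\pi=\sum_{k\geq 1}(-1)^{k+1}\downarrow^{(k-1)}\circ\,\overline\Delta^{(k-1)}$ that the paper only introduces in the following proposition. Two cosmetic points in your inductive step: dispose first of the case $x\in F_{n-1}$ (otherwise prescribing the linear map separately on $\overline{C_{n-1}}$ and on $x$ is not a priori well defined, since you only wrote $C=C_{n-1}+\K x$ rather than a direct sum), and the identification $\Theta\bigl(T_{\le n-1}(\prim(H))\bigr)=C_{n-1}$ rests on combining the induction hypothesis with $\overline\Delta_n\circ\Theta=\Theta^{\otimes n}\circ\overline\Delta_n$ and injectivity — you do indicate this, so it is only a matter of spelling it out.
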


\begin{proof}
The map $\Theta$ is obviously an algebra map. An easy induction on $k$ proves that for any $v_1,\ldots,v_k\in \prim(H)$, in $H$,
\[\Delta(v_1\downarrow \ldots \downarrow v_k)=\sum_{i=0}^k v_1\downarrow \ldots \downarrow v_i\otimes v_{i+1}\downarrow \ldots \downarrow v_k,\]
so $\Theta$ is a coalgebra morphism.

Let us assume that $\Theta$ is not injective. Let us consider $w\in \ker(\Theta)$, non zero. There exists $n\geq 1$ such that $w\in \displaystyle \bigoplus_{k=1}^n \prim(H)^{\otimes k}$.
Let us choose $w$ such that $n$ is minimal. Then the restriction of $\Theta$ to $\displaystyle \bigoplus_{k=1}^{n-1}$ is injective. Moreover, 
\begin{align*}
0&=\Delta \circ \Theta(w)\\
&=(\Theta\otimes \Theta)\circ \Delta(w)\\
&=\Theta(w)\otimes 1+1\otimes \Theta(w)+(\Theta\otimes \Theta)\overline{\Delta}(w)\\
&=(\Theta\otimes \Theta)\overline{\Delta}(w).
\end{align*}
Observing that 
\[\overline{\Delta}(w)\in \left(\bigoplus_{k=1}^{n-1} \prim(H)^{\otimes k}\right)^{\otimes 2},\]
we obtain that $\overline{\Delta}(w)=0$, so $\Delta(w)=w\otimes 1+1\otimes w$ and finally $w\in \prim(H)$. Therefore, $\Theta(w)=w=0$, which is a contradiction. So $\Theta$ is injective.\\

If $\Theta$ is surjective, then the coalgebras $(T(\prim(H)),\Delta)$ and $(H,\Delta)$ are isomorphic, so $H$ is conilpotent. 
Let us assume that $H$ is conilpotent and let us prove that $\Theta$ is surjective. Let $x\in Ker{\overline\Delta}_{n},\ n\geq 2$, let us prove that $x\in \mathrm{Im}(\Theta)$ by induction on $n$. 

If $n=2$, then $x\in \prim(H)$ and $x=\Theta(x)$. Let us assume the result at rank $n$ and let $x\in Ker{\overline\Delta}_{n+1}$.
Then, as  $\overline{\Delta}$ is coassociative, $\overline{\Delta}_n(x)\in \ker(\overline{\Delta})^{\otimes n}=\prim(H)^{\otimes n}$. Let us put 
\[\overline{\Delta}_n(x)=\sum_{i=1}^k v_{i,1}\otimes \ldots \otimes v_{i,n}.\]
Then, if $w=\displaystyle \sum_{i=1}^k  v_{i,1} \ldots v_{i,n}$, $\overline{\Delta}_n\circ \Theta(w)=\overline{\Delta}_n(x)$, so the induction hypothesis applies to $x-\Theta(w)$.
Therefore, $x-\Theta(w)\in \mathrm{Im}(\Theta)$ and finally $x\in \mathrm{Im}(\Theta)$. 
\end{proof}

\begin{prop}
Let $(H,\downarrow,\Delta)$ be a conilpotent infinitesimal bialgebra. Then 
\[\bar H=\mathrm{Prim}(H)\oplus \bar H\downarrow \bar H,\]
and the projection $\pi$ on $\mathrm{Prim}(H)$ in this direct sum is given by
\[\pi=\sum_{k=1}^\infty (-1)^{k+1}\downarrow^{(k-1)}\circ \overline{\Delta}^{(k-1)}.\]
\end{prop}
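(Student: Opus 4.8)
The plan is to deduce the statement from Proposition \ref{propinf}. Since $(H,\downarrow,\Delta)$ is a conilpotent infinitesimal bialgebra, Proposition \ref{propinf} provides an isomorphism of infinitesimal bialgebras $\Theta\colon(T(\mathrm{Prim}(H)),m_{conc},\Delta)\to(H,\downarrow,\Delta)$; set $V:=\mathrm{Prim}(H)$. By construction $\Theta$ fixes $V=V^{\otimes 1}$ pointwise, carries $T_+(V)$ isomorphically onto $\bar H$, and --- being an algebra morphism from $m_{conc}$ to $\downarrow$ --- carries the square $m_{conc}(T_+(V)\otimes T_+(V))=\bigoplus_{n\geq 2}V^{\otimes n}$ of the augmentation ideal onto $\bar H\downarrow\bar H$. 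As $V=\mathrm{Prim}(T(V))$ and $T_+(V)=V\oplus\bigoplus_{n\geq 2}V^{\otimes n}$, transporting this decomposition through $\Theta$ yields $\bar H=\mathrm{Prim}(H)\oplus\bar H\downarrow\bar H$, and shows moreover that the projection $\pi$ onto $\mathrm{Prim}(H)$ in this direct sum corresponds under $\Theta$ to the canonical projection $\pi_V\colon T_+(V)\to V$ that annihilates all tensors of length $\geq 2$.

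It remains to identify the displayed formula with $\pi$. First, the sum is locally finite: by conilpotency, for each $h\in\bar H$ one has $\overline\Delta^{(k-1)}(h)=0$ once $k$ is large enough. Next, since $\Theta$ intertwines $\downarrow$ with $m_{conc}$ and, being a morphism of coaugmented coalgebras between connected coalgebras, intertwines the reduced coproducts ($(\Theta\otimes\Theta)\circ\overline\Delta=\overline\Delta\circ\Theta$, hence likewise all the iterates $\overline\Delta^{(k-1)}$), it also intertwines the iterated products $\downarrow^{(k-1)}$ with $m_{conc}^{(k-1)}$. Conjugating $\sum_{k\geq 1}(-1)^{k+1}\downarrow^{(k-1)}\circ\overline\Delta^{(k-1)}$ by $\Theta$ therefore produces $\sum_{k\geq 1}(-1)^{k+1}m_{conc}^{(k-1)}\circ\overline\Delta^{(k-1)}$ on $T_+(V)$, and it suffices to show this last operator equals $\pi_V$.

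This is a one-line binomial identity. For a word $w=v_1\ldots v_n\in V^{\otimes n}$, the iterated reduced coproduct $\overline\Delta^{(k-1)}(w)=\overline\Delta_k(w)$ is the sum, over the $\binom{n-1}{k-1}$ ways of writing $n=i_1+\cdots+i_k$ with all $i_j\geq 1$, of the tensors obtained by cutting $w$ into consecutive blocks of those lengths; applying $m_{conc}^{(k-1)}$ reassembles each summand back into $w$. Hence
\[\sum_{k\geq 1}(-1)^{k+1}m_{conc}^{(k-1)}\circ\overline\Delta^{(k-1)}(w)=\left(\sum_{k=1}^{n}(-1)^{k+1}\binom{n-1}{k-1}\right)w=(1-1)^{n-1}\,w,\]
which is $w$ when $n=1$ and $0$ when $n\geq 2$ --- that is, $\pi_V(w)$. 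Pulling this identity back along $\Theta$ gives $\pi=\sum_{k\geq 1}(-1)^{k+1}\downarrow^{(k-1)}\circ\overline\Delta^{(k-1)}$, completing the proof.

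I do not expect a serious obstacle: once Proposition \ref{propinf} is available the content is bookkeeping --- pinning down the index conventions ($\downarrow^{(0)}=\overline\Delta^{(0)}=\mathrm{Id}$, $\downarrow^{(1)}=\downarrow$, $\overline\Delta^{(1)}=\overline\Delta$, higher ones defined by iteration), and checking that $\Theta$ genuinely intertwines the iterated reduced coproducts, which is immediate from it being a morphism of coaugmented coalgebras. An alternative not routed through $\Theta$ is to verify directly, from coassociativity of $\overline\Delta$, the recursion $\pi=\mathrm{Id}-\downarrow\circ(\pi\otimes\mathrm{Id})\circ\overline\Delta$; this at once gives $\pi|_{\mathrm{Prim}(H)}=\mathrm{Id}$, and, with the infinitesimal compatibility of $\downarrow$ and $\Delta$, that $\pi$ vanishes on $\bar H\downarrow\bar H$ and takes values in $\mathrm{Prim}(H)$, whence $\pi$ is the asserted projection. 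Even so, the direct sum decomposition itself still relies on Proposition \ref{propinf}, so the route above remains the most economical.
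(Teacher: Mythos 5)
Your proof is correct and follows essentially the same route as the paper: reduce to $(T(V),m_{conc},\Delta)$ via Proposition \ref{propinf}, then evaluate the alternating sum on a word of length $n$, where cutting into $k$ nonempty blocks and reconcatenating returns the word, so the coefficient is $\sum_{k}(-1)^{k+1}\binom{n-1}{k-1}=\delta_{n,1}$ (the paper phrases this as a sum over cut-sets $I\subseteq[n-1]$). Your added remarks on local finiteness of the sum and on transporting the decomposition $T_+(V)=V\oplus\bigoplus_{n\geq 2}V^{\otimes n}$ through $\Theta$ are harmless elaborations of what the paper leaves implicit.
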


\begin{proof}
By Proposition \ref{propinf}, it is enough to prove it for $H=(T(V),m_{conc},\Delta)$. Let $n\geq 1$ and $w\in V^{\otimes n}$. 
\begin{align*}
\pi(w)&=\sum_{k\geq 1} \sum_{\substack{w=w_1\ldots w_k,\\ w_1,\ldots,w_k\neq 1}}(-1)^{k+1}w\\
&=\left(\sum_{k\geq 1} \sum_{\substack{w=w_1\ldots w_k,\\ w_1,\ldots,w_k\neq 1}}(-1)^{k+1}\right)w\\
&=\left(\sum_{I\subseteq [n-1]} (-1)^{|I|}\right)w\\
&=\delta_{n,1}w.
\end{align*}
Here, $I$ represents the places where the word $w$ is cut, where $i\in [n-1]$ reflects a cut between the letter $i$ and $i+1$ of $w$. 
Therefore, $\pi$ is a projection on $V=\mathrm{Prim}(T(V))$, which vanishes $T(V)_+\downarrow T(V)_+=\displaystyle \bigoplus_{n\geq 2} V^{\otimes n}$. 
\end{proof}

\begin{remark}
By Takeuchi's formula \cite{Takeuchi1971}, $\pi+\varepsilon$ is in fact the opposite of the antipode of $(H,\downarrow,\Delta)$, where by antipode is meant the convolution inverse of the identity map for the convolution product on linear endomorphisms induced by the coproduct $\Delta$ and the product$\downarrow$. This also follows more abstractly from the implicit definition of $\pi$ acting on $H=(T(V),m_{conc},\Delta)$ by the equation $\id-\varepsilon = \pi\ast \id$ or, equivalently, $(-\pi-\varepsilon )\ast \id =\varepsilon$.
\end{remark}

Therefore, if $(H,m,\Delta)$ is a commutative Hopf algebra, cofree as a coalgebra, with an extra product $\downarrow$ such that $(H,\downarrow,\Delta)$ is an infinitesimal bialgebra, 
then we can use this map $\pi$ as projector on $\prim(H)$.
This is the case for $H_T$. For example,
\begin{align*}
\pi(\tun)&=\tun,&
\pi(\tdeux)&=0,&
\pi(\tun\tun)&=\tun\tun-2\tdeux,\\
\pi(\ttroisun)&=0,&
\pi(\ptroisun)&=0,&
\pi(\ttroisdeux)&=0,\\
\pi(\tdeux\tun)&=\tdeux\tun-\ttroisun-\ptroisun+\ttroisdeux,\\
\pi(\tun\tun\tun)&=\tun\tun\tun-3\ttroisun-3\ptroisun+6\ttroisdeux.
\end{align*}
This projection induces a  commutative $B_\infty$ structure on $\prim(H_T)$, given by
\begin{align*}
\langle x_1\otimes \ldots \otimes x_k,y_1\otimes \ldots\otimes y_l\rangle&=\pi((x_1\downarrow \ldots \downarrow x_k)(y_1\downarrow \ldots \downarrow y_l)),
\end{align*}
where $x_1,\ldots,x_k,y_1,\ldots,y_l \in \prim(H_T)$. For example,
\begin{align*}
\langle \tun,\tun\rangle&=\pi(\tun\tun)=\tun\tun-2\tdeux,\\
\langle \tun\otimes \tun,\tun \rangle=\langle \tun,\tun\otimes \tun\rangle&=\pi(\tdeux\tun)=\tdeux\tun-\ttroisun-\ptroisun+\ttroisdeux,\\
\langle \tun\tun-2\tdeux,\tun \rangle=\langle \tun,\tun\tun-2\tdeux\rangle&=\pi(\tun\tun\tun-2\tdeux\tun)=\tun\tun\tun-2\tdeux\tun-\ttroisun-\ptroisun+4\ttroisdeux.
\end{align*}

\section{Finite topologies: the double bialgebra structure}
In this last section, we take advantage of the fact that the rich combinatorics of finite topologies allows to give an alternative calculation of the canonical (or generalized Eulerian) idempotents to illustrate what concrete computations can be performed to study changes of bases in commutative $B_\infty$-algebras.

The Hopf algebra $H_T$ also has a second coproduct $\delta$, defined with the help of the following notions \cite{Foissy37}. Let us consider a finite topology $T$, on a set $E$,
associated to the quasi-order $\leq_T$. Let $\sim$ be an equivalence relation on $E$.
\begin{enumerate}
\item $T\mid \sim$ is the topology associated to the quasi-order  $\leq_{T\mid \sim}$ defined on $E$ by
\[x \leq_{T\mid \sim} y\Longleftrightarrow x\leq_T y\mbox{ and }x\sim y.\]
\item $T/\sim$ is the topology associated to the quasi-order $\leq_{T/\sim}$ defined as the transitive closure of the relation defined  on $E$ by
\[x Ry\Longleftrightarrow x\leq_T y\mbox{ or }x\sim y.\]
\item We shall say that $\sim \in E_c(T)$ if:
\begin{itemize}
\item The connected components of $T\mid \sim$ are the equivalence classes of $\sim$.
\item The relation $\sim_{T/\sim}$ associated to the quasi-order $\leq_{T/\sim}$ is $\sim$.
\end{itemize}
\end{enumerate}
The coproduct $\delta$ sends any finite topology $T$ to
\[\delta(T)=\sum_{\sim\in E_c(T)} T/\sim \otimes T\mid \sim.\]
For example,
\begin{align*}
\delta(\tun)&=\tun\otimes \tun,\\
\delta(\tdeux)&=\tdeux\otimes \tun\tun+\tdun{$2$}\otimes \tdeux,\\
\delta(\ttroisun)&=\ttroisun \otimes \tun\tun\tun+2\tddeux{$2$}{} \otimes \tun\tdeux+\tdun{$3$}\otimes \ttroisun,\\
\delta(\ttroisdeux)&=\ttroisdeux \otimes \tun\tun\tun+\tddeux{$2$}{} \otimes \tun\tdeux++\tddeux{}{$2$} \otimes \tun\tdeux+\tdun{$3$}\otimes \ttroisdeux,\\
\delta(\ptroisun)&=\ptroisun \otimes \tun\tun\tun+2\tddeux{}{$2$} \otimes \tun\tdeux+\tdun{$3$}\otimes \ptroisun.
\end{align*}
The counit $\varepsilon_\delta$ of this coproduct sends any finite topology $T$ to $1$ if $T$ is discrete (that is to say if $\leq_T=\sim_T$) and to $0$ otherwise.
Then $(H_T,m,\Delta,\delta)$ is a double bialgebra, that is to say:
\begin{enumerate}
\item $(H_T,m,\delta)$ is a commutative bialgebra.
\item $(H_T,m,\Delta)$ is a commutative bialgebra in the category of right $(H_T,m,\delta)$-comodules with the coaction $\delta$, or in a more detailed version:
\begin{itemize}
\item $\Delta:H_T\longrightarrow H_T\otimes H_T$ is a comodule morphism, that is to say
\[(\Delta \otimes \id)\circ \delta=m_{1,3,24}\circ (\delta \otimes \delta)\circ \Delta,\]
where $m_{1,3,24}(h_1\otimes h_2\otimes h_3\otimes h_4):=h_1\otimes h_3\otimes m(h_3\otimes h_4).$
\item The counit $\varepsilon_\Delta:H_T\longrightarrow \K$ is a comodule morphism, that is to say
\begin{align*}
&\forall x\in H_T,&(\varepsilon_\Delta\otimes \id)\circ \delta(x)&=\varepsilon_\Delta(x)1.
\end{align*}\end{itemize}\end{enumerate}

For conilpotent double bialgebras, it is possible to obtain the generalized Eulerian idempotent from a single infinitesimal character $\lambda$:

\begin{prop}
Let $(H,m,\Delta,\delta)$ be a double bialgebra, such that $(H,\Delta)$ is conilpotent. We consider the map $\lambda:H\longrightarrow \K$, defined by $\lambda(1)=0$ and for any $x\in H_+$,
\[\lambda(x)=\sum_{k=1}^\infty \frac{(-1)^{k-1}}{k}\varepsilon_\delta^{\otimes k}\circ \overline{\Delta}_k(x).\]
Then the generalized Eulerian idempotent of $(H,m,\Delta)$ is given by
\[e=(\lambda \otimes \id)\circ \delta.\]
\end{prop}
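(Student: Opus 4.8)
The plan is to recognise both $e$ and $\lambda$ as logarithms, for the convolution product attached to $\Delta$, of natural elements, and then to produce a morphism of convolution algebras carrying one to the other. Recall first that in the convolution algebra $(\mathrm{End}(H),\ast)$ with $f\ast g:=m\circ(f\otimes g)\circ\Delta$ and unit $\eta\varepsilon_\Delta$ one has $(\id-\eta\varepsilon_\Delta)^{\ast k}=m_k\circ\overline\Delta_k$ on $\bar H$, so that the generalized Eulerian idempotent is $e=\log^\ast(\id_H):=\sum_{k\geq 1}\frac{(-1)^{k-1}}{k}(\id-\eta\varepsilon_\Delta)^{\ast k}$ (see \cite[Thm 4.4.1]{CP21}, and Theorem \ref{canid}); the sum is locally finite because $(H,\Delta)$ is conilpotent. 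Running the same bookkeeping in the convolution algebra $(H^\ast,\ast)$ of linear forms --- same $\Delta$, unit $\varepsilon_\Delta$ --- and using that $\varepsilon_\delta$ and $\varepsilon_\Delta$ agree on $1$ while $\varepsilon_\Delta$ vanishes on $\bar H$, one gets $(\varepsilon_\delta-\varepsilon_\Delta)^{\ast k}=\varepsilon_\delta^{\otimes k}\circ\overline\Delta_k$ on $\bar H$; hence $\lambda=\log^\ast(\varepsilon_\delta)$.

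Next I would introduce $\Phi\colon H^\ast\to\mathrm{End}(H)$, $\mu\mapsto(\mu\otimes\id)\circ\delta$, and check that it is a morphism of unital algebras from $(H^\ast,\ast)$ to $(\mathrm{End}(H),\ast)$. Unitality is the identity $\Phi(\varepsilon_\Delta)=\eta\varepsilon_\Delta$, which is precisely the statement that $\varepsilon_\Delta$ is a comodule morphism, $(\varepsilon_\Delta\otimes\id)\circ\delta(x)=\varepsilon_\Delta(x)1$. Multiplicativity, $\Phi(\mu\ast\nu)=\Phi(\mu)\ast\Phi(\nu)$, is the one genuine computation: expanding both sides in Sweedler notation, it collapses exactly to the comodule-bialgebra compatibility $(\Delta\otimes\id)\circ\delta=m_{1,3,24}\circ(\delta\otimes\delta)\circ\Delta$. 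One also notes $\Phi(\varepsilon_\delta)=(\varepsilon_\delta\otimes\id)\circ\delta=\id_H$ by the counit axiom for $\delta$.

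Finally, since $\Phi$ is linear, multiplicative and unital it commutes termwise with the logarithm series --- both series being locally finite on each element of $H$, again by conilpotency of $(H,\Delta)$ --- so
\[e=\log^\ast(\id_H)=\log^\ast\!\bigl(\Phi(\varepsilon_\delta)\bigr)=\Phi\bigl(\log^\ast(\varepsilon_\delta)\bigr)=\Phi(\lambda)=(\lambda\otimes\id)\circ\delta,\]
as claimed. The only delicate step is the multiplicativity of $\Phi$; everything else is routine manipulation of the conilpotency filtration and of the counit axioms.
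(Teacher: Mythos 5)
Your proof is correct. The paper itself gives no in-text argument for this proposition (its proof is just a pointer to \cite{Foissy40}), and your route is essentially the standard one used there: identify $e=\log^\ast(\id)$ and $\lambda=\log^\ast(\varepsilon_\delta)$ (both series are locally finite by conilpotency of $(H,\Delta)$, since $(\id-\eta\varepsilon_\Delta)^{\ast k}=m_k\circ\overline{\Delta}_k$ and $(\varepsilon_\delta-\varepsilon_\Delta)^{\ast k}=\varepsilon_\delta^{\otimes k}\circ\overline{\Delta}_k$ on $\bar H$), then transport one logarithm to the other through the unital algebra morphism $\Phi(\mu)=(\mu\otimes\id)\circ\delta$ from $(H^\ast,\ast)$ to $(\mathrm{End}(H),\ast)$, which sends $\varepsilon_\delta$ to $\id$. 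The multiplicativity step you single out does indeed reduce exactly to the comodule--bialgebra axiom: applying $\mu\otimes\nu\otimes\id$ to $(\Delta\otimes\id)\circ\delta=m_{1,3,24}\circ(\delta\otimes\delta)\circ\Delta$ yields $\Phi(\mu\ast\nu)=\Phi(\mu)\ast\Phi(\nu)$, provided one reads $m_{1,3,24}(h_1\otimes h_2\otimes h_3\otimes h_4)=h_1\otimes h_3\otimes m(h_2\otimes h_4)$ (the displayed formula in the text contains a typo, repeating $h_3$). The remaining ingredients are as you state: unitality of $\Phi$ is the axiom $(\varepsilon_\Delta\otimes\id)\circ\delta=\eta\varepsilon_\Delta$, the identity $\Phi(\varepsilon_\delta)=\id$ is the counit axiom for $\delta$, and the termwise exchange of $\Phi$ with the logarithm series is legitimate because $\delta(x)$ is a finite sum and each evaluation of the series involves finitely many nonzero terms.
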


\begin{proof}
See \cite{Foissy40}. 
\end{proof}

In the case of finite topologies, it is possible to inductively compute this infinitesimal character: 

\begin{prop} 
Let $T$ be a finite topology, associated to the quasi-order $\leq_T$ on the set $E$. We denote by $\min(T)$ the set of classes of $\sim_T$ which are minimal for the order $\overline{\leq_T}$.  
We define $\Upsilon(T)\in \mathbb{Z}[X,X^{-1}]$ by the following:
\begin{align*}
\Upsilon(T)&=\begin{cases}
\displaystyle \frac{1}{X} \mbox{ if }T=1,\\
\displaystyle \sum_{\emptyset\subsetneq I\subseteq \min(T)}X \Upsilon(T_{\mid E\setminus I})\mbox{ otherwise}.
\end{cases}
\end{align*}
Then, for any nonempty finite topology $T$, $\Upsilon(T)\in \mathbb{Z}[X]$ and
\[\lambda(X)=\int_{-1}^0 \Upsilon(T)(t)\mathrm{d}t.\]
\end{prop}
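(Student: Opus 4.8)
\emph{Overview.} The plan is to identify $\Upsilon(T)$ with the generating series of the very scalars that enter the definition of $\lambda$. For a finite topology $T$ and $k\ge1$ put $c_k(T):=\varepsilon_\delta^{\otimes k}\circ\overline\Delta_k(T)$ (with the convention $\overline\Delta_1=\id_{\bar H_T}$), so that by definition $\lambda(T)=\sum_{k\ge1}\frac{(-1)^{k-1}}{k}c_k(T)$. I would prove that for every \emph{nonempty} $T$ one has
\[\Upsilon(T)(X)=\sum_{k\ge1}c_k(T)\,X^{k-1}.\]
Granting this, $\Upsilon(T)\in\mathbb Z[X]$ is immediate (all exponents are $\ge0$ since $T\ne1$), and, using $\displaystyle\int_{-1}^0 t^{k-1}\,\mathrm{d}t=\frac{(-1)^{k-1}}{k}$,
\[\int_{-1}^0\Upsilon(T)(t)\,\mathrm{d}t=\sum_{k\ge1}c_k(T)\int_{-1}^0 t^{k-1}\,\mathrm{d}t=\sum_{k\ge1}\frac{(-1)^{k-1}}{k}c_k(T)=\lambda(T),\]
which is the assertion (the left-hand side of the stated formula being $\lambda(T)$).

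\emph{A recursion for the $c_k$.} The heart of the argument is the identity, valid for nonempty $T$ on $E$ and every $k\ge1$,
\[c_k(T)=\sum_{\emptyset\subsetneq I\subseteq\min(T)}c_{k-1}\bigl(T_{\mid E\setminus I}\bigr),\]
where one sets $c_0(1):=1$ and $c_j(1):=0$ for $j\ge1$ (and $c_0(T):=0$ for $T\ne1$). To obtain it, recall that the open sets of $T$ are the up-sets of the poset $(E/\sim_T,\overline{\leq_T})$, so $\overline\Delta(T)=\sum_{D}T_{\mid D}\otimes T_{\mid E\setminus D}$, the sum running over the proper nonempty down-sets $D$ of classes. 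Now $\varepsilon_\delta(T_{\mid D})$ equals $1$ if $T_{\mid D}$ is discrete and $0$ otherwise, and a down-set of classes is discrete exactly when it is an antichain, i.e.\ exactly when $D\subseteq\min(T)$. Writing $\overline\Delta_k=(\id\otimes\overline\Delta_{k-1})\circ\overline\Delta$ by coassociativity and applying $\varepsilon_\delta$ to the first tensor factor therefore keeps precisely the down-sets $D=I$ contained in $\min(T)$, leaving $\overline\Delta_{k-1}$ acting on $T_{\mid E\setminus I}$; the lone missing term $I=$ all classes (which arises only when $T$ is discrete) contributes $c_{k-1}(1)$, which is $0$ for $k\ge2$ by the convention, while the case $k=1$ reduces to ``$\varepsilon_\delta(T)=1$ iff $T$ is discrete iff $\min(T)=E/\sim_T$,'' which is clear.

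\emph{Conclusion and the main obstacle.} The series $G(T):=\sum_{k\ge0}c_k(T)X^{k-1}$ (so $G(1)=\tfrac1X$) then satisfies $G(T)=\sum_{\emptyset\subsetneq I\subseteq\min(T)}X\,G(T_{\mid E\setminus I})$ for $T\ne1$, which is verbatim the recursion and base case defining $\Upsilon$; since each $T_{\mid E\setminus I}$ has strictly fewer $\sim_T$-classes, an induction on the number of classes gives $\Upsilon(T)=G(T)$, hence $\Upsilon(T)(X)=\sum_{k\ge1}c_k(T)X^{k-1}\in\mathbb Z[X]$ for nonempty $T$, and the integral computation of the overview finishes the proof. (For intuition one may note that $c_k(T)$ counts the strictly order preserving surjections $(E/\sim_T,\overline{\leq_T})\twoheadrightarrow\{1<\cdots<k\}$, equivalently the ways of dismantling $T$ from below in $k$ nonempty layers of minimal classes, which is the shared combinatorial meaning of the two recursions; this interpretation is, however, not logically needed.) I expect the only genuinely delicate point to be the bookkeeping around the empty topology $T=1$: $\overline\Delta$ discards the term $1\otimes T$, while the recursion for $\Upsilon$ silently reinstates it through the value $\Upsilon(1)=\tfrac1X$, so one must check that the conventions chosen for $c_\bullet(1)$ make the two recursions coincide exactly, in particular on discrete topologies.
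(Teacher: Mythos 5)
Your proof is correct and takes essentially the same route as the paper: the paper expands $\lambda(T)$ through the iterated reduced coproduct into a count of strictly order-preserving surjections (which is exactly your generating series $\sum_{k\geq 1}c_k(T)X^{k-1}$) and verifies the recursion by conditioning on the bottom layer $f^{-1}(1)\subseteq\min(T)$, which is the same peeling-off-the-minimal-antichain argument underlying your recursion for the $c_k$. The only difference is organizational — you prove the recursion abstractly and identify the series with $\Upsilon$ by uniqueness/induction instead of writing the explicit surjection formula first — and your bookkeeping of the conventions at $T=1$ is sound.
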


\begin{proof}
Let $T$ be a nonempty finite topology, associated to the quasi-order $\leq_T$. Then
\begin{align*}
\lambda(T)&=\sum_{k=1}^\infty \frac{(-1)^{k-1}}{k} \sum_{\substack{f:E\twoheadrightarrow [k],\\ x\leq_T y \Longrightarrow f(x)\leq f(y)}} \varepsilon_\delta(T_{\mid f^{-1}(1)})\ldots  \varepsilon_\delta(T_{\mid f^{-1}(k)})\\
&=\sum_{k=1}^\infty  \sum_{\substack{f:E\twoheadrightarrow [k],\\ x<_T y \Longrightarrow f(x)< f(y),\\ x\sim_Ty\Longrightarrow f(x)=f(y)}}\frac{(-1)^{k-1}}{k}.
\end{align*}
We then put, for any finite topology $T$ on a set $E$,
\begin{align}
\label{defupsilon}\Upsilon(T)=\sum_{k=0}^\infty \sum_{\substack{f:E\twoheadrightarrow [k],\\ x<_T y \Longrightarrow f(x)< f(y),\\ x\sim_Ty\Longrightarrow f(x)=f(y)}} X^{k-1},
\end{align}
Note that by convention, $\Upsilon(1)=\dfrac{1}{X}$. Then
\[\lambda(T)=\int_{-1}^0 \Upsilon(T)(t)\mathrm{d}t.\]
Let $f:E\twoheadrightarrow [k]$ be such that for any $x,y\in E$ such that $x<_T y$, then $f(x)<f(y)$ and for any $z,t\in E$ such that $z\sim_T t$, then $f(z)=f(t)$. Then $f^{-1}(1)$ is a nonempty subset of $\min(T)$, and we obtain that
\begin{align*}
\Upsilon(T)&=\sum_{\emptyset\subsetneq I\subseteq \min(T)}\sum_{k=1}^\infty
\sum_{\substack{f:E\setminus I\longrightarrow \{2,\ldots,k\},\\ x<_T y\Longrightarrow f(x)<f(y)\\ x\sim_Ty\Longrightarrow f(x)=f(y)}}X^{k-2+1}\\
&= \sum_{\emptyset\subsetneq I\subseteq \min(T)}  \sum_{\emptyset\subsetneq I\subseteq \min(T)}X \Upsilon(P_{\mid E\setminus I}),
\end{align*}
which allows to compute $\Upsilon(T)$ by induction on the number of vertices. 
\end{proof}

\begin{example}
\begin{align*}
\Upsilon(\tun)&=1,&\Upsilon(\tun\tun)&=2X+1,&\Upsilon(\tun\tun\tun)&=6X^2+6X+1,\\
\Upsilon(\tdeux)&=X,&\Upsilon(\ttroisun)=\Upsilon(\ptroisun)&=2X^2+X,&\Upsilon(\ttroisdeux)&=X^2,\\
\Upsilon(\tun\tdeux)&=3X^2+2X,
\end{align*}
and consequently
\begin{align*}
\lambda(\tun)&=1,&\lambda(\tun\tun)&=0,&\lambda(\tun\tun\tun)&=0,\\
\lambda(\tdeux)&=-\frac{1}{2},&\lambda(\ttroisun)=\lambda(\ptroisun)&=\frac{1}{6},&\lambda(\ttroisdeux)&=-\frac{1}{3},\\
\lambda(\tun\tdeux)&=0,
\end{align*}
which finally gives, for $e$ and for the canonical $\pi$-idempotent:
\begin{align*}
e(\tun)&=\tun,&\pi\circ e(\tun)&=\tun,\\
e(\tdeux)&=\tdeux-\frac{1}{2}\tun\tun,&\pi \circ e(\tdeux)&=\tdeux-\frac{1}{2}\tun\tun,\\
e(\ttroisun)&=\ttroisun-\tdeux\tun+\frac{1}{6}\tun\tun\tun,&\pi\circ e(\ttroisun)&=\frac{1}{6}\tun\tun\tun-\tdeux\tun+\frac{1}{2}\ttroisun+\frac{1}{2}\ptroisun,\\
e(\ptroisun)&=\ptroisun-\tdeux\tun+\frac{1}{6}\tun\tun\tun,&\pi\circ e(\ptroisun)&=\frac{1}{6}\tun\tun\tun-\tdeux\tun+\frac{1}{2}\ttroisun+\frac{1}{2}\ptroisun,\\
e(\ttroisdeux)&=\ttroisdeux-\tdeux\tun+\frac{1}{3}\tun\tun\tun,&\pi\circ e(\ttroisdeux)&=\frac{1}{3}\tun\tun\tun-\tdeux\tun+\ttroisdeux.
\end{align*}
Here are the values of $\lambda$ on connected posets of order 4 (it is zero on non connected posets):
\begin{align*}\begin{array}{|c||c|c|c|c|}
\hline T&\tquatreun,\pquatreun&\tquatredeux,\pquatredeux,\pquatrecinq&\tquatrequatre,\pquatrequatre,\pquatresept,\pquatrehuit&\tquatrecinq\\ 
\hline\hline&&&&\\[-2mm]
 \lambda(T)&0&-\dfrac{1}{12}&-\dfrac{1}{6}&-\dfrac{1}{4}\\[-2mm]
&&&&\\
\hline\end{array}\end{align*}

Here are a few examples of order 5:
\begin{align*}
\lambda(\tcinqun)&=-\frac{1}{30},&\lambda(\tcinqhuit)&=\frac{1}{12},&\lambda(\tcinqtreize)&=\frac{3}{20}.
\end{align*}
\end{example}

Let us now give two families of examples.

\begin{prop}\begin{enumerate}
\item For any $n\geq 1$, the $n^{\mathrm{th}}$ ladder is the finite topology associated to the poset $([n],\leq)$:
\begin{align*}
l_1&=\tun,&l_2&=\tdeux,&l_3&=\ttroisdeux,&l_4&=\tquatrecinq,&l_5&=\tcinqquatorze\ldots\\
\end{align*}
If $n\geq 1$, then $\Upsilon(l_n)=X^{n-1}$ and $\lambda(l_n)=\dfrac{(-1)^{n+1}}{n}$. 
\item For any $n\geq 1$, we write
\[\Upsilon(\tun^n)=\sum_{k=0}^{n-1} s_{n,k}X^k. \]
Then $s_{n,k}$ is the number of surjective maps from $[n]$ to $[k+1]$. In particular, $s_{n,n-1}=n!$ and $s_{n,0}=1$. 
\end{enumerate}\end{prop}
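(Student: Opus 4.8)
The plan is to obtain both parts directly from the explicit expansion \eqref{defupsilon} of $\Upsilon$ together with the integral formula $\lambda(T)=\int_{-1}^0\Upsilon(T)(t)\,\mathrm{d}t$, using the recursive definition of $\Upsilon$ as a convenient cross-check. The first line of (1) merely introduces the notation $l_n$ and displays the corresponding Hasse graphs, so nothing is to be proved there.

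To compute $\Upsilon(l_n)$ I would argue as follows. The chain $([n],\le)$ is a poset, so $\sim$ is trivial and the only constraint on a surjection $f\colon[n]\twoheadrightarrow[k]$ occurring in \eqref{defupsilon} is $f(1)<f(2)<\dots<f(n)$; this forces $k=n$ and $f=\mathrm{Id}$, whence $\Upsilon(l_n)=X^{n-1}$. Equivalently one may induct: $\min(l_n)$ is the single bottom element, deleting it turns $l_n$ into $l_{n-1}$, so the recursion gives $\Upsilon(l_n)=X\,\Upsilon(l_{n-1})$ with base case $\Upsilon(\tun)=X\cdot\tfrac1X=1$. Then a one-line computation yields $\lambda(l_n)=\int_{-1}^0 t^{\,n-1}\,\mathrm{d}t=-\tfrac{(-1)^n}{n}=\tfrac{(-1)^{n+1}}{n}$.

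For part (2), the topology $\tun^n$ is the $n$-element antichain: it has no strict relations and $\sim$ is trivial, so both conditions in \eqref{defupsilon} become vacuous, and $\Upsilon(\tun^n)=\sum_{k\ge1}N_{n,k}\,X^{k-1}$, where $N_{n,k}$ denotes the number of all surjections $[n]\twoheadrightarrow[k]$. Reindexing by $j=k-1$ and using $N_{n,k}=0$ for $k>n$ gives $\Upsilon(\tun^n)=\sum_{j=0}^{n-1}N_{n,j+1}X^j$, so that $s_{n,j}=N_{n,j+1}$ is precisely the number of surjections $[n]\twoheadrightarrow[j+1]$. The two special values are then immediate: $s_{n,n-1}=N_{n,n}$ is the number of bijections of $[n]$, namely $n!$, and $s_{n,0}=N_{n,1}=1$. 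As a check I would note that $\min(\tun^n)=[n]$ makes the recursive definition read $\Upsilon(\tun^n)=\sum_{i=1}^n\binom ni X\,\Upsilon(\tun^{n-i})$, which reproduces the standard recursion $N_{n,k}=\sum_{i=1}^n\binom ni N_{n-i,k-1}$ obtained by fixing the fibre over a chosen element of $[k]$.

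I do not expect a real obstacle here. The points that need the most care are: (a) checking that $\sim$ is trivial for both families, and that $\tun^n$ carries no strict relations, so that the constraints in \eqref{defupsilon} genuinely degenerate as claimed; (b) handling the convention $\Upsilon(1)=\tfrac1X$ correctly in the inductive base case of part (1); and (c) observing that deleting the minimal class of a ladder again produces a (shorter) ladder, which is what makes the recursion close up.
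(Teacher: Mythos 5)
Your proposal is correct and follows essentially the same route as the paper: both parts are read off directly from the explicit expansion (\ref{defupsilon}) — the chain's constraints force the unique surjection $\id_{[n]}$, and the antichain's constraints are vacuous so all surjections count — with the value of $\lambda(l_n)$ then obtained by the integral formula. Your additional cross-check via the recursive definition and the explicit reindexing for $s_{n,k}$ merely spell out details the paper leaves implicit.
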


\begin{proof}
We use the definition of $\Upsilon$ given by (\ref{defupsilon}). For $l_n$, indexing the vertices from the root to the leaf, 
the unique surjective map to be taken in account in the sum defining $\Upsilon(l_n)$ is $\id_{[n]}$. For $\tun^n$, all surjective maps have to be taken into account.
\end{proof}

\begin{cor}
For any $n\geq 2$, let us denote by $c_n$ the $n^{\mathrm{th}}$  corolla, that is to say the finite topology on $[n]$ given by
\[ \{I\mid I\subseteq[n-1]\}\cup\{[n]\}.\]
Graphically, 
\begin{align*}
c_2&=\tdeux,&c_3&=\ttroisun,&c_4&=\tquatreun,&c_5&=\tcinqun\ldots
\end{align*}
Then, for any $n\geq 2$,  
\[\lambda(c_n)=\sum_{k=0}^{n-2} s_{n-1,k}\frac{(-1)^{k+1}}{k+2}.\]
\end{cor}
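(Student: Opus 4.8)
The plan is to compute $\Upsilon(c_n)$ in closed form via a single application of the recursion from the Proposition defining $\Upsilon$, and then to integrate it on $[-1,0]$ as prescribed by $\lambda(T)=\int_{-1}^0\Upsilon(T)(t)\,\mathrm{d}t$.

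First I would describe the quasi-order attached to $c_n$. Through Alexandroff's correspondence, the open sets $\{I\mid I\subseteq[n-1]\}\cup\{[n]\}$ give: the only open set containing $n$ is $[n]$, so $n\leq_T x$ for every $x$; for distinct $i,j\in[n-1]$ the open set $\{i\}$ shows $i\not\leq_T j$, and $\{i\}$ also shows $i\not\leq_T n$. Hence every class of $\sim_T$ is a singleton and $\{n\}$ is the unique class minimal for $\overline{\leq_T}$, i.e. $\min(c_n)=\{\{n\}\}$. Therefore in
\[\Upsilon(c_n)=\sum_{\emptyset\subsetneq I\subseteq \min(c_n)}X\,\Upsilon\big((c_n)_{\mid [n]\setminus I}\big)\]
the sum reduces to the single term $I=\{n\}$, and deleting the root leaves the $n-1$ pairwise incomparable points $1,\dots,n-1$, that is the discrete topology $\tun^{n-1}$. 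So $\Upsilon(c_n)=X\,\Upsilon(\tun^{n-1})$.

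Next I would substitute the value $\Upsilon(\tun^{n-1})=\sum_{k=0}^{n-2}s_{n-1,k}X^k$ computed in the preceding Proposition, obtaining $\Upsilon(c_n)=\sum_{k=0}^{n-2}s_{n-1,k}X^{k+1}$, and integrate term by term:
\[\lambda(c_n)=\int_{-1}^0\Upsilon(c_n)(t)\,\mathrm{d}t=\sum_{k=0}^{n-2}s_{n-1,k}\int_{-1}^0 t^{k+1}\,\mathrm{d}t=\sum_{k=0}^{n-2}s_{n-1,k}\,\frac{(-1)^{k+1}}{k+2},\]
since $\int_{-1}^0 t^{k+1}\,\mathrm{d}t=-\frac{(-1)^{k+2}}{k+2}=\frac{(-1)^{k+1}}{k+2}$. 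This is exactly the claimed identity.

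I do not expect a genuine obstacle here: the whole content is the combinatorial identification $\min(c_n)=\{\{n\}\}$ together with $(c_n)_{\mid[n-1]}=\tun^{n-1}$, after which the two cited Propositions and a one-line integral conclude. The only points deserving a moment's care are the sign in the integral and the base case $n=2$, where $\tun^{n-1}=\tun$, $\Upsilon(c_2)=X$ and the formula returns $\lambda(c_2)=-\frac{1}{2}$, in agreement with $\lambda(\tdeux)=-\frac{1}{2}$.
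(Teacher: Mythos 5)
Your proof is correct and follows essentially the same route as the paper: the unique minimal class of $c_n$ reduces the recursion to $\Upsilon(c_n)=X\,\Upsilon(\tun^{n-1})$, and integrating $\sum_{k=0}^{n-2}s_{n-1,k}X^{k+1}$ over $[-1,0]$ gives the stated formula. The only difference is that you spell out the Alexandroff quasi-order, the sign of the integral and the case $n=2$, which the paper leaves implicit.
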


\begin{proof}
As $c_n$ has a unique minimal element, 
\[\Upsilon(c_n)=X \Upsilon(\tun^{n-1})=\sum_{k=0}^{n-2}s_{n-1,k}X^{k+1}. \]
The results then follows by integration between $-1$ and $0$. 
\end{proof}

Moreover, due to the form of $\delta(l_n)$ and $\delta(c_n)$, we let the reader check that

\begin{cor}
For any $n\geq 2$,
\begin{align*}
e(l_n)&=\sum_{k=1}^n \sum_{\substack{n=i_1+\ldots+i_k,\\ i_1,\ldots,i_k\geq 1}}\frac{(-1)^{k+1}}{k}l_{i_1}\ldots l_{i_k}\\
&=\sum_{1i_1+\ldots+ni_n=n}
\frac{(-1)^{i_1+\ldots+i_n+1}(i_1+\ldots+i_n-1)!}{i_1!\ldots i_n!}l_1^{i_1}\ldots l_n^{i_n},\\
e(c_n)&=\sum_{i=0}^{n-1}\binom{n-1}{i}\lambda(c_{n-i}) \tun^i c_{n-i},
\end{align*}
where by convention $c_1=\tun$. 
\end{cor}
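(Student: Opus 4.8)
The plan is to substitute the explicit shapes of the coproduct $\delta$ on $l_n$ and on $c_n$ into the identity $e=(\lambda\otimes\id)\circ\delta$ of the preceding Proposition, and then to reorganize the two resulting sums.

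First I would pin down $E_c(l_n)$ and $E_c(c_n)$ together with the associated pairs $(T/\sim,\,T\mid \sim)$. Since $\leq_{l_n}$ is a total order, for any equivalence $\sim$ the connected components of $l_n\mid \sim$ are automatically its classes, so the binding requirement is $\sim_{l_n/\sim}=\sim$; a non-interval class would merge, through the transitive closure defining $l_n/\sim$, the elements it straddles, hence $E_c(l_n)$ consists exactly of the partitions of $[n]$ into consecutive blocks, i.e. of the compositions $(i_1,\dots,i_k)$ of $n$. For such a $\sim$ one reads off $l_n\mid \sim\,=\,l_{i_1}\cdots l_{i_k}$, while $l_n/\sim$ is a chain on the $k$ blocks, the $j$-th block having size $i_j$. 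For the corolla $c_n$, denote its minimal vertex by $n$: only the equivalences whose single non-singleton class is $\{n\}\cup I$ for some $I\subseteq\{1,\dots,n-1\}$ (all remaining leaves singletons) meet the first condition, a class of two or more leaves not containing $n$ being totally disconnected in $c_n\mid \sim$; a direct verification shows every such $\sim$ lies in $E_c(c_n)$, with $c_n\mid \sim\,=\,c_{|I|+1}\,\tun^{n-1-|I|}$ and $c_n/\sim$ the corolla on the $n-|I|$ blocks whose minimal block has size $|I|+1$.

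Next I would apply $\lambda$ to the left (quotient) factor of each term. The key observation is that $\Upsilon$, and hence $\lambda$, is blind to the multiplicities of the blocks of a chain or of a corolla, because the recursion for $\Upsilon$ removes the minimal block at once: thus $\Upsilon(l_n/\sim)=X^{k-1}$ and $\Upsilon(c_n/\sim)=X\,\Upsilon(\tun^{n-1-|I|})$, so that $\lambda(l_n/\sim)=\lambda(l_k)=(-1)^{k+1}/k$ and $\lambda(c_n/\sim)=\lambda(c_{n-|I|})$. Feeding these values and the expansions of $\delta(l_n)$ and $\delta(c_n)$ from the previous step into $e=(\lambda\otimes\id)\circ\delta$ produces at once the first displayed formula for $e(l_n)$, and for $e(c_n)$ the sum over $I$ grouped by $|I|$, the binomial $\binom{n-1}{|I|}$ counting the subsets of that size.

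Finally, the second expression for $e(l_n)$ follows from the first by collecting all compositions with the same underlying partition: a composition of $n$ with exactly $i_j$ parts equal to $j$ occurs $\binom{k}{i_1,\dots,i_n}=k!/(i_1!\cdots i_n!)$ times, where $k=i_1+\dots+i_n$, which converts $(-1)^{k+1}/k$ into $(-1)^{i_1+\dots+i_n+1}(i_1+\dots+i_n-1)!/(i_1!\cdots i_n!)$ and $l_{i_1}\cdots l_{i_k}$ into $l_1^{i_1}\cdots l_n^{i_n}$. I expect the only genuine obstacle to be the combinatorial bookkeeping of the first step — correctly identifying $E_c(l_n)$ and $E_c(c_n)$ and the block multiplicities of $l_n/\sim$ and $c_n/\sim$, and keeping in mind that $\lambda$ is evaluated on the quotient factor and not on the restriction factor — after which everything reduces to routine handling of multinomial coefficients.
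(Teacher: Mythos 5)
Your route is exactly the intended one (the paper itself leaves the verification to the reader): determine $E_c(l_n)$ and $E_c(c_n)$, insert the resulting expression for $\delta$ into $e=(\lambda\otimes\id)\circ\delta$, and use that $\Upsilon$, hence $\lambda$, only depends on the quotient poset $(E/\sim_T,\overline{\leq_T})$, so that $\lambda(l_n/\sim)=\lambda(l_k)=\frac{(-1)^{k+1}}{k}$ for a partition into $k$ consecutive blocks and $\lambda(c_n/\sim)=\lambda(c_{n-|I|})$ for the class $\{n\}\cup I$. Your identification of $E_c(l_n)$ with the compositions of $n$, of $E_c(c_n)$ with the subsets $I\subseteq[n-1]$, of the restriction and quotient factors in each case, and the multinomial regrouping passing from the composition sum to the partition sum are all correct; in particular both displayed formulas for $e(l_n)$ are fully established.

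The corolla case is where your write-up stops short. What your computation actually yields is $e(c_n)=\sum_{I\subseteq[n-1]}\lambda(c_{n-|I|})\,c_{|I|+1}\,\tun^{\,n-1-|I|}=\sum_{i=0}^{n-1}\binom{n-1}{i}\lambda(c_{i+1})\,\tun^{i}\,c_{n-i}$, and this is \emph{not} the displayed formula, which pairs the coefficient $\lambda(c_{n-i})$ with the monomial $\tun^{i}c_{n-i}$. The two disagree already in small cases: your derivation gives $e(\tdeux)=\tdeux-\frac{1}{2}\tun\tun$ and $e(\ttroisun)=\ttroisun-\tdeux\tun+\frac{1}{6}\tun\tun\tun$, in agreement with the examples computed earlier in the paper, whereas the formula as printed would give $\tun\tun-\frac{1}{2}\tdeux$ for $n=2$ and $\tun\tun\tun-\tun\tdeux+\frac{1}{6}\ttroisun$ for $n=3$. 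So the statement carries an index slip (it should read $\lambda(c_{i+1})$, or equivalently keep $\lambda(c_{n-i})$ but attach it to the monomial $c_{i+1}\tun^{\,n-1-i}$), and your claim that grouping the sum over $I$ by $|I|$ ``produces at once'' the displayed identity cannot be right as it stands: you either re-indexed the monomial without re-indexing the argument of $\lambda$, or matched the coefficient without checking the accompanying factor. The substance of your argument is sound and in fact proves the corrected identity; what is missing is the final bookkeeping (a check at $n=2$ would have sufficed) that exposes the mismatch between your output and the stated formula.
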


\bibliographystyle{amsplain}
\bibliography{biblio}

\end{document}